\numberwithin{equation}{section}
\numberwithin{figure}{section}
\renewcommand*{\thefootnote}{\fnsymbol{footnote}}
\title{It\^o-Wentzell-Lions formula for measure dependent random fields under full and conditional measure flows}
\author{
 	\normalsize Gon\c calo dos Reis\footnote{G. dos Reis acknowledges support from the \emph{Funda{\c c}$\tilde{\text{a}}$o para a Ci$\hat{e}$ncia e a Tecnologia} (Portuguese Foundation for Science and Technology) through the project UIDB/00297/2020 and UIDP/00297/2020 (Centro de Matem\'atica e Aplica\c c\~oes CMA/FCT/UNL).}
 	\\[8pt]
         \small  University of Edinburgh\\ 
         \small  School of Mathematics \\
         \small  Edinburgh, EH9 3FD, UK\\  
         \small  and \\
	\small  Centro de Matem\'atica e Aplica\c c\~oes \\
	\small (CMA), FCT, UNL, Portugal \\
        \small  G.dosReis@ed.ac.uk \\
        \small Phone: +44 (0) 131 651 7677
 \and
\normalsize Vadim Platonov\\[8pt] 
         \small  University of Edinburgh\\ 
         \small  School of Mathematics \\
         \small  Edinburgh, EH9 3FD, UK\\    
        \small    \\
		\small  \\
          \\
        \small V.d.Platonov@sms.ed.ac.uk
}
\date{ \currenttime, \ddmmyyyydate\today\qquad{(File: \tt \jobname.tex})}
\theoremstyle{plain}
\newtheorem{theorem}{Theorem}[section]
\newtheorem{proposition}[theorem]{Proposition}
\newtheorem{definition}[theorem]{Definition}
\newtheorem{remark}[theorem]{Remark}
\newtheorem{assumption}[theorem]{Assumption}
\newcommand{\bE}{\mathbb{E}}
\newcommand{\bF}{\mathbb{F}}
\newcommand{\bN}{\mathbb{N}}
\newcommand{\bP}{\mathbb{P}}
\newcommand{\bR}{\mathbb{R}}
\newcommand{\cB}{\mathcal{B}}
\newcommand{\cC}{\mathcal{C}}
\newcommand{\cF}{\mathcal{F}}
\newcommand{\cP}{\mathcal{P}}
\newcommand{\trace}{\textrm{Trace}}
\newcommand{\Supp}{\textrm{Supp}}
\newcommand{\Law}{\textrm{Law}}
\definecolor{darkgreen}{rgb}{0,0.35,0}
\newcommand{\1}{\mathbbm{1}}
\newcommand{\ti}{{t_i}}
\newcommand{\tip}{{t_{i+1}}}
\newcommand\dd{\text{d}}
\begin{document}

\selectlanguage{english}

\maketitle
\renewcommand*{\thefootnote}{\arabic{footnote}}

\vspace{-1cm}
\begin{abstract} 
We present several It\^o-Wentzell formulae on Wiener spaces for real-valued functional random field of It\^o type that depend on measure flows. We distinguish the full- and the marginal-measure flow cases in the spirit of mean-field games.  Derivatives with respect to the measure components are understood in the sense of Lions.
\end{abstract}
{\bf Keywords:} 
It\^o-Wentzell formula, Stochastic Differential Equations, full measure flows, conditional measure flows, Lions derivative
\vspace{0.3cm}

\noindent
{\bf 2010 AMS subject classifications:}\\
Primary: 60H05 
Secondary: 60H10, 60H15

%

%
%
%
\footnotesize
\setcounter{tocdepth}{2}
\tableofcontents
\normalsize
\footnotesize


\normalsize
%
%
%

\section{Introduction}

The extension of the celebrated It\^o chain rule from deterministic regular functions to random fields of It\^o type was proposed originally by Wentzell \cite{ventzel1965equations} and later  generalised in \cite{rozovskii1990stochastic,kunita1997stochastic,krylov2011ito,kunita1981some,FlandoliRusso2002}. 
This successful result has appeared in SPDE problems from wellposedness to numeric methods and applications to fluid dynamics modelling \cites{rozovskii1990stochastic,carmona1999stochastic,GerencserGyoengy2017,FlandoliRusso2002,de2019implications}, in stochastic regularisation problems \cite{DuboscqReveillac2016}, filtering \cite{KrylovWang2011}, and mathematical Finance \cites{nicole2013exact,AhmadHamblyLedger2018,matoussi:hal-03025475}. 

More recently, there has been an explosion of new literature around mean-field models where, thinking of stochastic differential equations, maps now contain a dependency on the measure of the solution map. These are the  so-called mean-field equations or McKean-Vlasov equations and have appeared as models in many different sub-fields. A critical tool here is the so-called It\^o-Lions formula that extends the classical It\^o formula and allows one to produce a dynamics for functionals of measure flows \cite{CarmonaDelarue2017book1,CarmonaDelarue2017book2}. The construction relies on the notion of Lions derivative for measure functionals.

Regarding measure derivatives, we point out that the earlier (to Lions \cite{lions2007cours}) notion of intrinsic (measure) derivative was introduced in \cite{AlbeverioKondratievRockner1996}, and used for stochastic analysis on the configuration space over Riemannian manifolds -- see additionally  \cite{RenWang2021DerivativeFormulas} for characterisations on the relations of different type derivatives in measures. The more recent Lions derivative is a stronger notion than the intrinsic derivative of \cite{AlbeverioKondratievRockner1996} in the sense that, over the same space, the intrinsic derivative is a Gateaux derivative while the Lion derivative is a Fr\'echet derivative. We remark that the Lions derivative concept is not an intuitive notion especially when seen from the lens of geometric analysis (but the intrinsic is); arguably, the intrinsic derivative is not the intuitive concept when seen from the lens of Mathematical finance. 
Lastly, we emphasize that both these derivative concepts are different from the ``linear functional derivative'' one usually sees in optimal transport e.g., \cite{AmbrosioGigliSavare2005book} -- see ``Otto Calculus'' in  \cite[Ch 15]{Villani2009} and for a comparison \cite[Section 5.2]{CarmonaDelarue2017book1}.


For \emph{deterministic} functionals of measures, extending the classical It\^o formula to the so-called It\^o-Lions formula, there are several approaches and results available in the literature. The classical difference of increments approach is used in \cite{buckdahn2017mean} under a strong regularity assumption of existence of second order Fr\'echet derivatives. In  \cite{chassagneux2014classical} an approach using projections over empirical measures is used allowing for minimal regularity assumptions. Both approaches are neatly reviewed in \cite{CarmonaDelarue2017book1}*{Chapter 5}. 
Linked to the existence of a regular solution to the master equation for mean-field games with common noise is the approach by \cite{carmona2014master}*{Appendix 6}. Their proof is carried out using It\^o-Taylor type expansions (similar to \cites{buckdahn2017mean}) and requiring the involved maps to be twice Fr\'echet differentiable. 
Lastly, another approach is to use a semi-group type approach to describe the flow of measures and obtain the necessary infinitesimal expansions see \cite{cardaliaguet2019master}*{Appendix A}. More recently  \cite{cavallazzi2021krylov} present such It\^o-Lions formula for maps belonging to Sobolev spaces, \cite{guo2020s,talbi2021dynamic} provide also such formula for semi-martingales -- these three works leave out the conditional measure-flow case. An It\^o-type formula for measure-valued diffusion processes as conditional distributions of image dependent SDEs has been proved in \cite{Wang2021Imagedependent}.
To the best of our knowledge we have found only one It\^o-Wentzell-Lions type formula in the literature, \cite{cardaliaguet2019master}*{Appendix A}. Their approach is set in relation to an existing regular solution to a certain master equation for mean-field control games with common noise. Their proof is carried out via expansions of the densities of the underlying (conditional) measure flow but where the involved diffusion components are constants.  

\smallskip
\emph{Our contribution.} In this manuscript we propose It\^o-Wentzell formulae for random fields that embed measure-functionals in a way that is amenable to an analysis in the sense of Lions derivatives. We establish two formulae, and two further corollaries, all decoupled from the applications either in mean-field game theory in finance \cite{CarmonaDelarue2017book1,CarmonaDelarue2017book2,guo2020s,talbi2021dynamic}, fluid mechanics \cite{holm20StochasticEffectsOfWaves,kolokoltsov2017RegularityAndSensitivity,BossyJabirTalay2011OnConditional}, neuroscience modeling \cite{Erny2021ConditoinalPoC}, population dynamics models \cite{bayraktar21finite-space} or further related stochastic analysis problems \cite{Lacker2019InvertingTM,platonov2021StratonovichIto} albeit motivated by them. 

Our first result is for the full flow of measures (the measure is deterministic) while the second is for a partial flow of measures (the measure is random). Each result is then further extended to a full joint chain rule allowing for the an additional driving stochastic processes $(X_t)_{t \geq 0}$ having a semi-martingale expansion. In particular, we recover the results in \cite{cardaliaguet2019master}*{Appendix A} while finessing their assumptions, see our Remark \ref{rem:SlightDifference2CDLL2019} below. A by-product of our results is a clarification on the necessity of the assumptions on the classical It\^o-Wentzell formula \cite{kunita1997stochastic}*{Theorem 3.3.1} (see our Theorem \ref{theo-Classic-Ito-Wentzell} below). Namely, we prove that one can require one order of regularity less from the drift and diffusion coefficient of the random vector field to which the It\^o-Wentzell formula is applied to (see our Theorem \ref{theo:ExtendItoWentzell}). This smaller result is of its own interest.

The usefullness of these result is manyfold. Direct applications within mean-field optimal control could be envisaged in neuroscience modeling \cite{Erny2021ConditoinalPoC}; extending the contribution of \cite{elkaroui2018consistent,matoussi:hal-03025475}, where the classical It\^o-Wentzell formula is used to develop a consistent forward utilities of investment and consumption -- introducing the relative performance concerns (as in \cite{Platonov2020forward,platonov2020forwardCRRA}). 
Also building from \cite{bayraktar21finite-space}, where a  mean-field games with Fisher-Wright common noise is discussed. This model is used in the evolution of population genetics and where it would be natural to update the model to support the distributional component, making use of the results we provide in order to establish the verification procedure. 
In fluid dynamics these formulae would allow to expand the dynamics of driving signals against the underlying vector field
\cite{holm20StochasticEffectsOfWaves,kolokoltsov2017RegularityAndSensitivity,BossyJabirTalay2011OnConditional}. 

Lastly, our work can be extended in several directions to include anticipative processes  \cite{ocone1989generalized}, general semimartingale dynamics \cite{talbi2021dynamic,guo2020s}, path dependent functionals in combination with functional It\^o calculus \cite{cont2013functional}, or extensions to $K$-forms for SPDEs in fluid dynamics \cite{de2019implications}.

\smallskip 
\textit{Methodological perspective: from It\^o-Lions and It\^o-Wenzell to It\^o-Wenzell-Lions.} Our proofs combine two techniques, the projection over empirical measures approach of \cites{chassagneux2014classical,CarmonaDelarue2017book1}, which have the benefit of yielding lower regularity requirements on the underlying coefficients and Taylor-like expansion arguments in the vein of \cite{kunita1981some} -- we argue next that this is the suitable methodology for this result. 

The chain rule in the measure component first appears in \cite{buckdahn2017mean} making use of the telescopic summation technique and building on a strong assumption of a second order Fr\'echet differentiability of the lifting map. 
To overcome the requirement of a second Fr\'echet derivative and reduce it to just first order Fr\'echet derivative (in fact the so-called \emph{Partial-$\cC^2$} regularity)  for full measure case, the approach of empirical projection was introduced  \cite{chassagneux2014classical,CarmonaDelarue2017book1,CarmonaDelarue2017book2}:  
this is the approach we follow. 
In \cite{cardaliaguet2019master} the It\^o-Wentzell-Lions formula is shown under the constant diffusion of the random field. The authors follow the semi-group approach and require the existence of the density.

Recently, \cite{guo2020s} introduced the use of cylindrical polynomials approximation to build a measure chain rule for the measure flow of semimartingales, i.e., an It\^o-Lions formula for semimartingales. 
Finally, \cite{talbi2021dynamic} shows an It\^o-Lions formula for semimartingales with jumps (the exact same result of \cite{guo2020s}) but using the mechanisms of \cite{buckdahn2017mean}. Concretely, they make use of a telescopic summation technique building on the functional linear derivative instead of the Lions one. 
This approach relies on the assumptions of growth and boundedness of the functional linear derivative and its partial derivative with respect to new spatial variable.  
For both \cite{guo2020s,talbi2021dynamic} the conditional measure flow case is left unaddressed.

We already argued that neither the proof techniques of \cite{cardaliaguet2019master} or \cite{buckdahn2017mean} are appropriate as proofs for our results. The former requires constant diffusion coefficients to ensure existence of densities while the latter requires higher Fr\'echet regularity than needed. Hence the reason we follow \cite{chassagneux2014classical,CarmonaDelarue2017book1,CarmonaDelarue2017book2}. Two recent works \cite{guo2020s,talbi2021dynamic}, posterior to ours, use new techniques to prove the It\^o-Lions formula for general semimartingales (\textit{for deterministic fields}) --- it is not clear if those techniques can be adapted to prove the It\^o-Wentzell-Lions formulae we present in this manuscript under the same minimal regularity constraints we impose. The difficulty stems from our use of random fields while in \cite{guo2020s,talbi2021dynamic} the fields are deterministic. 

Concretely, to prove the It\^o-Wentzell-Lions formula of this manuscript with the same methodology of \cite{guo2020s} one would require a Leibniz rule to interchange the Fr\'echet derivative symbol with the stochastic integral one and thus would demand further regularity assumptions on top of the existing ones -- a general Leibniz rule within this framework is presently an open question\footnote{Preliminary work on exchanging Fr\'echet derivatives with the Lebesgue's integral has been carried in a note by O.~Kammar \cite{Kammar2016FrechetLebesgue}. It is unclear presently under which minimal conditions can one exchange Fr\'echet derivatives (and later the Lions  derivatives) with the stochastic (It\^o) integral.}. 
Moreover, such a result is not needed in  \cite{guo2020s,talbi2021dynamic} due to their use of deterministic fields! 
The telescopic summation approach from \cite{talbi2021dynamic} has another limitation in the context of proving an It\^o-Wentzell-Lions formula. 
One needs to expand the local difference of integrands by the application of the classical It\^o-Wentzell formula which requires the existence and well-definiteness of the random fields spanned by the differentiation in measure (in sense of Lions or linear functional; see our Theorem \ref{theo-Classic-Ito-Wentzell}).  
This limitation could be avoided by proving the aforementioned Leibniz rule which in turn would demand stronger regularity for the random field and its characteristics as mentioned earlier. For these reasons, we argue that the `empirical projection' technique \cites{chassagneux2014classical,CarmonaDelarue2017book1,CarmonaDelarue2017book2} is the suitable methodology.

\medskip
\textbf{Organisation of the paper.} In Section 2 we set notation and review a few concepts necessary for the main constructions. In Section \ref{sec:fullflow} we state the full measure flow results. While Section 2 builds towards Section \ref{sec:fullflow}, we will need to reframe some notation for Section \ref{sec:conditionalflow} where we present the conditional flow results. 
\smallskip

\textbf{Acknowledgements.} The authors would like to thank Fran\c cois Delarue (Universit\'e de Nice Sophia-Antipolis, FR) for the helpful discussions. 
\smallskip

\textbf{Data access statement.} No data was generated by this project.

%
%
%

%
%
%
\section{Notation and auxiliary results}
\label{sec:two}

\subsection{Notation and Spaces}

Let $\bN$ be the set of natural numbers starting at $1$, $\bR$ denotes the real numbers. For  collections of vectors in $\{x^l\}_{l}\in \bR^d$, let the upper index $l$ denote the distinct vectors, whereas the lower index the vector components, i.e. $x^l=(x^l_1,\cdots,x^l_d)\in \bR^d$ namely $x^l_j$ denotes the $j$-th component of $l$-th vector. For $x,y \in \bR^d$ denote the scalar product by $x \cdot y=\sum_{j=1}^d x_j y_j$; and $|x|=(\sum_{j=1}^d x_j^2)^{1/2}$ the usual Euclidean distance; and $x \otimes y$ denotes the tensor product of vectors $x,y \in \bR^d$. Let $\1_A$ be the indicator function of set $A\subset \bR^d$. For a matrix $A \in \bR^{d\times n}$ we denote by $A^\intercal$ its transpose and its Frobenius norm by $|A|=\trace\{A A^\intercal\}^{1/2}$. Let $I_d:\bR^d\to \bR^d$ be the identity map. 

We denote by $\cC(A,B)$ for $A,B \subseteq \bR^d$, $d\in \bN$, the space of continuous functions $f:A\to B$.  In terms of derivative operators and differentiable functions, $\partial_t$ denotes the partial differential in the time parameter $t \in [0,T]$; $\partial_x$ denotes the gradient operators in the spatial variables $x$ in $\bR^d$ while $\partial^2_{xx}, \partial_{yy}^2$ the Hessian operator in $x \text{ or }y \in\bR^d$. 

For $p,d,m\in \bN$ denote $\cC^p(\bR^d,\bR^m)$ the space of $p$-times continuously differentiable functions from $\bR^d$ to $\bR^m$. The space $\cC^1(\bR^d,\bR^m)$ is equipped with a collection of seminorms $\{ \|g\|_{\cC^1(K)} := \sup_{x \in K} ( |g(x)| + |\partial_x g(x)|)$, $g\in \cC^p(\bR^d) \}$, indexed by the compact subsets $K \subset \bR^d$.  The space $\cC^2(\bR^d)$ is equipped with a collection of seminorms $\{ \|g\|_{\cC^2(K)} := \sup_{x \in K} ( |g(x)| + |\partial_x g(x)|+|\partial^2_{xx} g(x)|), g\in \cC^p(\bR^d) \}$, indexed by the compact subsets $K \subset \bR^d$; we refer to $C^{1,2}=C^{1,2}([0,T]\times\bR^d,\bR^m)$ as the usual space of maps $f:[0,T]\times\bR^d\to \bR^m$ that are once continuously differentiable in the first variable, twice so in the second variable (as in $C^2(\bR^d,\bR^m)$) and jointly continuous across the several derivatives.

We say that the function is locally bounded, when its restriction to the compact set is bounded.

\subsection*{Spaces}

We introduce over $\bR^d$ the space of probability measures $\cP(\bR^d)$ and its subset $\cP_2(\bR^d)$ of those with finite second moment. The space $\cP_2(\bR^d)$ is Polish under the Wasserstein distance
\begin{align*}
W_2(\mu,\nu) = \inf_{\pi\in\Pi(\mu,\nu)} \Big(\int_{\bR^d\times \bR^d} |x-y|^2\pi(\dd x,\dd y)\Big)^\frac12, \quad \mu,\nu\in \cP_2(\bR^d) ,
\end{align*}   
where $\Pi(\mu,\nu)$ is the set of couplings for $\mu$ and $\nu$ such that $\pi\in\Pi(\mu,\nu)$ is a probability measure on $\bR^d\times \bR^d$ such that $\pi(\cdot\times \bR^d)=\mu$ and $\pi(\bR^d \times \cdot)=\nu$. Let $\Supp(\mu)$ denote the support of $\mu \in \cP(\bR^d)$.

Throughout set some $0<T<+\infty$ and we work the finite time interval $[0,T]$. Let our probability space be a completion of $(\Omega, \bF, \cF,\bP)$ with $\bF=(\cF_t)_{t \in [0,T]}$ carrying a $d$-dimensional Brownian motion $W=(W^1,\cdots,W^d)$ generating the probability space's filtration, augmented by all $\bP$-null sets, and with an additionally sufficiently rich sub $\sigma$-algebra $\cF_0$ independent of $W$. Let our probability space be an atomless Polish. We denote by $\bE[~\cdot~]=\bE^\bP[~\cdot~]$ the usual expectation operator wrt to $\bP$.   

We adopt the following convention, that for $d$-dimensional random vector $X=(X_1,\cdots,X_d)$ we understand denote $\bE[X]$ by the $d$-dimensional vector $(\bE[X_1],\cdots,\bE[X_d])$. The convenience of this notation will become apparent in the later Section \ref{sec:conditionalflow}.

 We define $L^2(\Omega,\cF_0,\bP,\bR^d)$ as the space of $\cF_0$-measurable random variables $\xi:\Omega\to \bR^d$ that are square integrable $\bE^\bP[|\xi|^2]<\infty$. Given two processes $(X_t)_{t \in [0,T]}$ and $(Y_t)_{t \in [0,T]}$ let $\langle X,Y \rangle_t$ denote their cross-variation up to time $t\in[0,T]$.

Lastly, for convenience we choose to work over $1$-, $d$- and $d\times d$-dimensional spaces. This is particularly helpful in lowering complexity of the presentation of the later sections where many sequences of approximating vector-valued stochastic processes are pushed through the It\^o and It\^o-Wentzell formula. The generalisation to different dimensions is straightforward from our text.

\subsection{The It\^o-Wentzell formula (classic)}

We first introduce the stochastic process $(X_t)_{t \in [0,T]}$ satisfying the dynamics 
\begin{align}
\label{eq:ProcessX-fullFlowMeasures}
\dd X_t = \beta_t\dd t +\gamma_t \dd W_t,\quad \textrm{and initial condition }X_0, 
\end{align}
where $W$ is a $d$-dimensional Brownian motion. The involved parameters satisfy the next condition.
\begin{assumption}
\label{Assump:SDE-X-1BM}
Let $X_0 \in L^2(\Omega,\cF_0,\bP;\bR)$ ($X_0$ is $\cF_0$-measurable and independent of $W_t,~ t \in [0,T]$). Take $\beta:\Omega\times [0,T]\to \bR^d$ and $\gamma:\Omega\times [0,T]\to\bR^{d \times d}$ such that $(\beta_t)_{t \in [0,T]}, (\gamma_t)_{t \in [0,T]}$ are $\bF$-progressively measurable processes and satisfy
\begin{align*}
\int_0^T (|\beta_s| + |\gamma_s|^2)\dd s < \infty, \quad \bP\textrm{-a.s..}
\end{align*}
\end{assumption}
We recall the It\^o-Wentzell formula in the style of \cite{cardaliaguet2019master}*{Section A.3.1} (see also \cite{kunita1997stochastic}*{Theorem 3.3.1} or \cite{nicole2013exact}*{Theorem 1.4}). 
\begin{theorem}[It\^o-Wentzell]
\label{theo-Classic-Ito-Wentzell}
Take $(X_t)_{t \in [0,T]}$ given by \eqref{eq:ProcessX-fullFlowMeasures} under Assumption \ref{Assump:SDE-X-1BM}. Let a map $V:\Omega \times [0,T] \times \bR^d\mapsto \bR$ be such that:
\begin{enumerate}[i)]
\item Fix $x \in \bR^d,$ $( V_t(x) )_{t \in [0,T]}$ is a continuous adapted process taking values in $\bR$;
\item Fix $t \in [0,T], \:\omega \in \Omega,\: \bR^d \ni x\mapsto V_t(x)$ is a $\cC^2(\bR^d)$-mapping with values in $\bR$;
\item $( V_t(x) )_{t \in [0,T]},$  $x \in \bR^d$ is a random field that admits the It\^o dynamics
\begin{align*}
V_t(x)= V_0(x) + \int_0^t \phi_s(x) \dd s + \int_0^t \psi_s(x)\cdot \dd W_s , \quad t \in [0,T],
\end{align*}
where $( \phi_t(\cdot) )_{t \in [0,T]}$ and $( \psi_t(\cdot) )_{t \in [0,T]}$ are $\bF$-progressively measurable processes with values in $\cC^2(\bR^d,\bR)$ and $\cC^2(\bR^d,\bR^{d})$ respectively, such that for any compact $K \subset \bR^d $
\begin{align*}
\int_0^T \big(\, \|\phi_s(\cdot) \|_{\cC^1(K)}  + \| \psi_s(\cdot) \|^2_{\cC^2(K)} \, \big) \dd s < \infty \quad \bP\textrm{-a.s.}
\end{align*}
\end{enumerate}
Then $( V_t(X_t) )_{t \in [0,T]}$ is an It\^o process and it satisfies $\bP$-a.s. the following expansion
\begin{align*}
V_T(X_T) 
-
V_0(X_0)&= \int_0^T\phi_s(X_s)\dd s 
+ \int_0^T\psi_s(X_s) \cdot \dd W_s 
+ \int_0^T \partial_x V_s(X_s) \cdot\beta_s \dd s \\
&
\qquad + \int_0^T\partial_x V_s(X_s) \cdot \gamma_s \dd W_s
+ \int_0^T\frac{1}{2}~\trace\big\{\partial_{xx}^2V_s(X_s) ~\gamma_s(\gamma_s)^{\intercal} \big\} \dd s \\
&
\qquad + \int_0^T \trace\big\{\partial_x\psi_s(X_s)  (\gamma_s)^\intercal \big\} \dd s.
\end{align*}

\end{theorem}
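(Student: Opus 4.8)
The plan is to reduce to a bounded setting by localization and then to derive the formula through a discretization and telescoping argument, isolating one nonstandard term — the covariation between $\psi$ and the driving noise — as the genuine obstacle, with the remaining five terms following from continuity and dominated convergence.

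First I would localize. Fixing an exhausting sequence of compacts $K_m \uparrow \bR^d$ and setting, for each $N,m$,
\[
\tau_{N,m} = \inf\Big\{ t\in[0,T] : \int_0^t\big(|\beta_s|+|\gamma_s|^2\big)ds + \int_0^t\big(\|\phi_s(\cdot)\|_{\cC^1(K_m)}+\|\psi_s(\cdot)\|^2_{\cC^2(K_m)}\big)ds \ge N \Big\}\wedge T,
\]
both sides of the claimed identity are, for each fixed $t$, pathwise-defined continuous processes, and every integral appearing is unaffected by stopping before $\tau_{N,m}$ on the event $\{\sup_{s\le t}|X_s|\le m\}$. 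Hence it suffices to prove the expansion up to $\tau_{N,m}$ and let $N,m\to\infty$, which reduces matters to the case where $\beta,\gamma$ are bounded and $\phi,\psi$ together with their relevant spatial derivatives are bounded along the range of $X$.

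Next, for a partition $0=t_0<\cdots<t_n=T$ of mesh $|\pi|$, telescoping gives the \emph{exact} identity $V_T(X_T)-V_0(X_0)=\sum_{i=0}^{n-1}\big(V_{\tip}(X_{\tip})-V_{\ti}(X_{\ti})\big)$ for every $\pi$, and I split each summand as
\[
V_{\tip}(X_{\tip})-V_{\ti}(X_{\ti})=\underbrace{\big(V_{\tip}(X_{\tip})-V_{\ti}(X_{\tip})\big)}_{A_i}+\underbrace{\big(V_{\ti}(X_{\tip})-V_{\ti}(X_{\ti})\big)}_{B_i}.
\]
For $A_i$ I insert the It\^o dynamics of $V$ from hypothesis iii) evaluated at the (anticipating) point $x=X_{\tip}$, using a continuous-in-$x$ version of the coefficient integrals — available by Kolmogorov's continuity theorem under the $\cC^2$-control — so that the substitution is meaningful, obtaining $A_i=\int_\ti^\tip\phi_s(X_\tip)ds+\int_\ti^\tip\psi_s(X_\tip)\cdot dW_s$. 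For $B_i$ I apply the classical It\^o formula to the fixed $\cC^2$ map $x\mapsto V_\ti(x)$ along $X$ on $[\ti,\tip]$, yielding $B_i=\int_\ti^\tip\partial_x V_\ti(X_s)\cdot dX_s+\tfrac12\int_\ti^\tip\trace\{\partial^2_{xx}V_\ti(X_s)\,\gamma_s\gamma_s^\intercal\}ds$ with $dX_s=\beta_s\,ds+\gamma_s\,dW_s$. Letting $|\pi|\to0$, all but one of the resulting sums converge in probability, by continuity of the coefficients in the time variable and dominated convergence for ordinary and stochastic integrals: the $B_i$ sum produces terms three, four and five (replacing the frozen $V_\ti$ by $V_s$, using continuity of $t\mapsto\partial_x V_t(x)$), while $\sum_i\int_\ti^\tip\phi_s(X_\tip)ds\to\int_0^T\phi_s(X_s)ds$.

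The main obstacle is the stochastic sum $\sum_i\int_\ti^\tip\psi_s(X_\tip)\cdot dW_s$ with its anticipating argument $X_\tip$. Writing $\psi_s(X_\tip)=\psi_s(X_\ti)+\big(\psi_s(X_\tip)-\psi_s(X_\ti)\big)$, the non-anticipating part $\sum_i\int_\ti^\tip\psi_s(X_\ti)\cdot dW_s$ converges in $L^2$ to the It\^o integral $\int_0^T\psi_s(X_s)\cdot dW_s$, i.e.\ term two. For the remainder I Taylor-expand $\psi_s(X_\tip)-\psi_s(X_\ti)\approx\partial_x\psi_s(X_\ti)(X_\tip-X_\ti)$ with $X_\tip-X_\ti\approx\gamma_\ti(W_\tip-W_\ti)$; since the increment $W_\tip-W_\ti$ is constant over the inner integral it factors out, leaving an expression built from the outer products $(W_\tip-W_\ti)(W_\tip-W_\ti)^\intercal$, which converge in the quadratic-variation sense to $(t_{i+1}-t_i)I_d$. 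Thus the remainder tends to $\int_0^T\trace\{\partial_x\psi_s(X_s)\,\gamma_s^\intercal\}ds$, precisely the nonclassical term six. The technical heart is making this rigorous: constructing the continuous-in-$x$ integral field used in $A_i$, controlling the second-order Taylor remainders via the $\cC^2$-bounds on $\psi$ and moment estimates on $X$, and showing that replacing the discrete quadratic (co)variation by $ds$ yields a vanishing error. Since the telescoping identity is exact and its right-hand decomposition converges in probability to the six-term expression, the two must agree; undoing the localization by sending $N,m\to\infty$ then gives the stated identity $\bP$-a.s.
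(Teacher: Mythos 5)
The paper itself offers no proof of Theorem \ref{theo-Classic-Ito-Wentzell}: its proof environment is a one-sentence remark recording that the statement is \cite{kunita1997stochastic}*{Theorem 3.3.1}, with \cite{kunita1997stochastic}*{Exercise 3.1.5} translating Kunita's hypotheses on the semimartingale characteristics into the stated conditions on $\phi,\psi$. Your skeleton --- localization, telescoping along a partition, the split into a time increment $A_i$ at a frozen spatial point and a spatial increment $B_i$ at a frozen time, substitution of the anticipating point into a continuous-in-$x$ version of the integral fields, classical It\^o for $B_i$, and extraction of the cross-variation term from the anticipating stochastic sum --- is exactly the classical argument underlying that citation, so the architecture is the right one.

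However, your rigorization of the decisive term fails as stated. You reduce the remainder to Brownian outer products via $X_\tip-X_\ti\approx\gamma_\ti(W_\tip-W_\ti)$ (and, implicitly, $\int_\ti^\tip \partial_x\psi_s(X_\ti)^\intercal dW_s\approx \partial_x\psi_\ti(X_\ti)^\intercal(W_\tip-W_\ti)$). But $\gamma$ and $\psi$ are only progressively measurable in time, with no continuity whatsoever: Assumption \ref{Assump:SDE-X-1BM} and hypothesis iii) allow $\gamma_s,\psi_s$ to be modified on a Lebesgue-null set of times, which leaves $X$, $V$ and both sides of the claimed identity unchanged, yet alters every sampled value $\gamma_\ti,\partial_x\psi_\ti$ arbitrarily. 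Accordingly, the error $\sum_i\int_\ti^\tip(\gamma_s-\gamma_\ti)\,dW_s$-type terms are not small --- left-endpoint sampling of a merely $L^2$-in-time path need not converge --- and they enter at precisely the same order as the cross term you are trying to identify; the ``quadratic-variation sense'' convergence of $(W_\tip-W_\ti)(W_\tip-W_\ti)^\intercal$ cannot absorb them. The standard repair, which is a genuinely different argument rather than a technical completion of yours, is to never freeze coefficients in time: after the (legitimate) Taylor step, apply on each subinterval the It\^o product rule to the pair $\int_\ti^\tip\partial_x\psi_s(X_\ti)^\intercal dW_s$ and $\int_\ti^\tip\gamma_s\,dW_s$, which yields two martingale increments plus exactly the bracket $\int_\ti^\tip\trace\big\{\partial_x\psi_s(X_\ti)\,(\gamma_s)^\intercal\big\}ds$; summing over $i$, the martingale parts vanish in probability by standard bracket estimates, and the Lebesgue parts converge to $\int_0^T\trace\{\partial_x\psi_s(X_s)(\gamma_s)^\intercal\}ds$ using only continuity in $x$, path continuity of $X$, and dominated convergence. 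A second, smaller gap of the same nature: your passage from the $B_i$ sums to terms three--five invokes ``continuity of $t\mapsto\partial_xV_t(x)$'' (and implicitly of $t\mapsto\partial^2_{xx}V_t(x)$), which is not among hypotheses i)--iii), these giving only continuity of $t\mapsto V_t(x)$ and $\cC^2$-regularity in $x$ at each fixed time. For $\partial_xV$ the joint continuity must be derived by differentiating the It\^o dynamics under the integral sign --- the same parameterized-integral machinery you invoke for the $A_i$ substitution, and precisely the content of \cite{kunita1997stochastic}*{Exercise 3.1.5} --- while for the Hessian an additional argument is required, since only $\|\phi_s\|_{\cC^1(K)}$, not $\|\phi_s\|_{\cC^2(K)}$, is assumed integrable.
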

The first two terms correspond to dynamics of the field $V_t(\cdot)$ within installed $X_t$-trajectories. The next three terms correspond to the usual It\^o formula. The last term is a cross-variation of the diffusion factor of the process with the same nature noise induced by the stochastic field $V_t(\cdot)$ which we write using a matrix-trace notation, this is a short notation to describe the sum over $i \in 1,\dots, N$ of the cross variations $\langle \int_0^\cdot \partial_x \psi_{s,i}(X_s) \cdot \dd W_s , \int_0^\cdot \gamma_{s,i} \cdot \dd W_s\rangle_t$, where $\gamma_{\cdot,i}$ stands for the $i$-th row of $\gamma$ and $\partial_x \psi_{\cdot,i}(\cdot)$ stands for the gradient (in $x$) of the $i$-th entry of $\psi$.
\begin{proof}
In this formulation, we state conditions on the differentiability of $\phi,\psi$ directly as opposed to the original formulation by \cite{kunita1997stochastic}*{Theorem 3.3.1} where conditions over the characteristics of the driving semimartingale were given, \cite{kunita1997stochastic}*{Exercise 3.1.5} closes the gap.
\end{proof}
A close inspection of Theorem \ref{theo-Classic-Ito-Wentzell} and its proof (\cite{kunita1981some},\cite{kunita1997stochastic}) reveals that the theorem holds under reduced regularity requirements. We explore this observation with our next result.
\begin{theorem}[It\^o-Wentzell under reduced regularity]
\label{theo:ExtendItoWentzell}
The conclusion of Theorem \ref{theo-Classic-Ito-Wentzell} still holds for $( V_t(X_t) )_{t \in [0,T]}$ if in condition iii) the constraints on $\phi, \psi$ are replaced by:
\begin{itemize}
    \item[] $( \phi_t(\cdot) )_{t \in [0,T]}$ are $( \psi_t(\cdot) )_{t \in [0,T]}$ $\bF$-progressively measurable processes with values on the spaces $\cC^0(\bR^d,\bR)$ and $\cC^1(\bR^d,\bR^{d})$ respectively, such that for any compact $K \subset \bR^d $
\begin{align}
\label{eq:RedudeIntegrabilityClassicItoWentzell}
\int_0^T \big(\, \|\phi_s(\cdot) \|_{\cC^0(K)}  + \| \psi_s(\cdot) \|^2_{\cC^1(K)} \, \big) \dd s < \infty \quad \bP\textrm{-a.s.}
\end{align}
\end{itemize}
\end{theorem}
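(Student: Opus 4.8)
The plan is to obtain the reduced-regularity statement from the classical formula of Theorem~\ref{theo-Classic-Ito-Wentzell} by a spatial mollification argument. The guiding observation is that the right-hand side of the It\^o-Wentzell expansion never differentiates $\phi$ and differentiates $\psi$ only once (through the cross-variation term $\trace\{\partial_x\psi_s(X_s)\gamma_s^\intercal\}$), while $V_t(\cdot)$ is still assumed to lie in $\cC^2(\bR^d)$ by condition ii). Hence every term of the formula is already well defined under \eqref{eq:RedudeIntegrabilityClassicItoWentzell}, and it only remains to produce it as a limit of instances of the formula we have in hand.

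First I would fix a standard mollifier $\rho^n(y)=n^d\rho(ny)$ with $\rho\in\cC^\infty(\bR^d,\bR)$, $\rho\ge 0$ compactly supported, $\int\rho=1$, and set $\phi^n_s:=\phi_s*\rho^n$, $\psi^n_s:=\psi_s*\rho^n$, $V_0^n:=V_0*\rho^n$, the convolutions being spatial. Then $\phi^n_s\in\cC^\infty(\bR^d,\bR)$ and $\psi^n_s\in\cC^\infty(\bR^d,\bR^d)$, and since convolution with a probability density does not increase sup-norms while each extra spatial derivative falls on $\rho^n$, one has, for every compact $K$ and a suitable enlargement $K'$, the bounds $\|\phi^n_s\|_{\cC^1(K)}\le C_n\|\phi_s\|_{\cC^0(K')}$ and $\|\psi^n_s\|_{\cC^2(K)}\le C_n\|\psi_s\|_{\cC^1(K')}$. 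Combined with \eqref{eq:RedudeIntegrabilityClassicItoWentzell}, these give the stronger integrability $\int_0^T(\|\phi^n_s\|_{\cC^1(K)}+\|\psi^n_s\|^2_{\cC^2(K)})ds<\infty$ demanded by Theorem~\ref{theo-Classic-Ito-Wentzell}. Setting $V^n_t(x):=V_0^n(x)+\int_0^t\phi^n_s(x)ds+\int_0^t\psi^n_s(x)\cdot dW_s$, a spatial stochastic Fubini theorem yields the crucial identity $V^n_t=V_t*\rho^n$; in particular $V^n_t(\cdot)\in\cC^\infty(\bR^d)$ satisfies i)--iii), and $\partial_x V^n_t=(\partial_x V_t)*\rho^n$, $\partial^2_{xx}V^n_t=(\partial^2_{xx}V_t)*\rho^n$. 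Applying Theorem~\ref{theo-Classic-Ito-Wentzell} to $V^n$ then gives the full expansion for $V^n_T(X_T)-V^n_0(X_0)$ for every $n$.

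It then remains to let $n\to\infty$ in this expansion. Since $X$ is continuous, $\bP$-a.s.\ its trajectory lies in a random compact ball $K_\omega$, on which all mollified quantities converge uniformly to their un-mollified counterparts: $V^n_t,\partial_x V^n_t,\partial^2_{xx}V^n_t$ to $V_t,\partial_x V_t,\partial^2_{xx}V_t$ (as $V_t\in\cC^2$), $\phi^n_s$ to $\phi_s$ (as $\phi_s\in\cC^0$), and $\psi^n_s,\partial_x\psi^n_s$ to $\psi_s,\partial_x\psi_s$ (as $\psi_s\in\cC^1$). The Lebesgue time-integrals converge $\bP$-a.s.\ by dominated convergence, the dominating functions being built from $\|\phi_s\|_{\cC^0(K'_\omega)}$, $\|\psi_s\|_{\cC^1(K'_\omega)}$, $|\beta_s|$, $|\gamma_s|^2$, all integrable on $[0,T]$ by \eqref{eq:RedudeIntegrabilityClassicItoWentzell} and Assumption~\ref{Assump:SDE-X-1BM} (with Cauchy-Schwarz for the mixed term $\trace\{\partial_x\psi_s(X_s)\gamma_s^\intercal\}$). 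For the two It\^o integrals one checks that $\int_0^T|\psi^n_s(X_s)-\psi_s(X_s)|^2ds$ and $\int_0^T|(\partial_x V^n_s(X_s)-\partial_x V_s(X_s))\gamma_s|^2ds$ tend to $0$ in probability by the same bounds, whence the stochastic integrals converge in probability. As each side of the $n$-indexed identity converges in probability, the limit is exactly the claimed formula for $V$. To make the random domination rigorous one localizes along $\tau_M:=\inf\{t:|X_t|\ge M\}\wedge T$, proves the identity on $[0,\tau_M]$, and sends $M\to\infty$ using $\tau_M\to T$ $\bP$-a.s.

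The main obstacle is the exchange of the spatial convolution with the It\^o integral, i.e.\ the stochastic Fubini step giving $V^n=V*\rho^n$; this is precisely what lets $\partial^2_{xx}V^n=(\partial^2_{xx}V_t)*\rho^n$ and $\partial_x V^n=(\partial_x V_t)*\rho^n$ converge even though $\phi$ is merely continuous, so that $\partial_x\phi$, which would appear in a naive term-by-term differentiation of the dynamics of $V^n$, need not exist. A secondary technical point is that the mollification radius forces the dominating bounds onto the slightly enlarged compacts $K'_\omega$ rather than $K_\omega$; this is harmless since $K'_\omega$ is again compact and the integrability in \eqref{eq:RedudeIntegrabilityClassicItoWentzell} is assumed on every compact.
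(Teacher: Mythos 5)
Your proposal is correct and follows essentially the same route as the paper's own proof: spatial mollification of $V,\phi,\psi$, application of the classical Theorem~\ref{theo-Classic-Ito-Wentzell} to the mollified field, then passage to the limit via localization of $X$ by stopping times, uniform convergence on compacts with dominated convergence for the Lebesgue integrals, and a quadratic-variation estimate for the stochastic integrals. The only difference is one of presentation: you make explicit (via stochastic Fubini) the identification $V^n = V*\rho^n$ linking the mollified coefficients to the mollified field, a step the paper uses implicitly.
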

\begin{proof}
The arguments we use are classical. We mollify $V,\phi,\psi$ in their spatial components by convolution with a smoothing kernel and obtain a sequence $(V^n,\phi^n,\psi^n)$, $n\in\bN$, such that for each $n\in\bN$ $( \phi^n_t(\cdot) )_{t \in [0,T]}$ are $( \psi^n_t(\cdot) )_{t \in [0,T]}$ $\bF$-progressively measurable processes with values in $\cC^1(\bR^d,\bR)$ and $\cC^2(\bR^d,\bR^{d})$ respectively (in fact even more due to the mollification), such that for any compact $K \subset \bR^d$, $\bP\textrm{-a.s.}$
\begin{align}
\label{eq:RedudeIntegrabilityClassicItoWentzell-Sequence}
\int_0^T \big(\, \|\phi^n_s(\cdot) \|_{\cC^1(K)}  
+ \| \psi^n_s(\cdot) \|^2_{\cC^2(K)} \, \big) \dd s + 
\sup_{\hat n\in\bN} \int_0^T \big(\, \|\phi^{\hat n}_s(\cdot) \|_{\cC^0(K)}
+ \| \psi^{\hat n}_s(\cdot) \|^2_{\cC^1(K)} \, \big) \dd s < \infty.
\end{align}
Lastly, $\bP$-a.s.~for $t\in[0,T]$ a.e.~we have that $\phi^n_t, \psi^n_t,\partial_x \psi^n_t$ converge to $\phi_t, \psi_t,\partial_x \psi_t$ uniformly (in $n$) on compact sets.  
It is clear that $V^n$ retains the properties of $V$, uniformly over $n$ for the $0$-th, $1$-st and $2$-nd derivative. In particular, $\bP$-a.s. for any $t\in[0,T]$  $V^n_t,\partial_{x}V^n_t,\partial_{xx}^2V^n_t$ converge to $V_t,\partial_{x}V_t,\partial_{xx}^2 V_t$ uniformly on compact sets. We conclude via Theorem \ref{theo-Classic-Ito-Wentzell} that $( V^n_t(X_t) )_{t \in [0,T]}$ is an It\^o process satisfying the expansion given.

The passage to the limit as $n\to \infty$ is also argued in a classical way. First we make use of a localizing sequence $(\tau^m)_{m\in\bN}$ over $X$ defined as $\tau^m:=\inf\{t>0:|X_t|>m\}$, $m\in\bN$ which in turn allows us to make use of the uniform convergence over compacts for the maps' sequence (in $n$) and \eqref{eq:RedudeIntegrabilityClassicItoWentzell}-\eqref{eq:RedudeIntegrabilityClassicItoWentzell-Sequence} repeatedly, i.e.~we can assume that $X$ is bounded. Arguing convergence of the Lebesgue integrals follows via continuity of the maps, integrability of the coefficients (see Assumption \ref{eq:ProcessX-fullFlowMeasures}) and dominated convergence theorem taking advantage of uniform convergence over compacts given that $X$ is assumed to take values in a bounded set. The stochastic integral terms requires an additional argument which we provide for the 2nd integral (the 1st is handled similarly),
\begin{align*}
\bE\Big[\sup_{0\leq t\leq T} 
&
\big|\int_0^t\partial_x V^n_s(X_s) \cdot \gamma_s \dd W_s
-\int_0^t\partial_x V_s(X_s) \cdot \gamma_s \dd W_s
\big|^2\Big]
\\
&
\leq \bE\Big[ \int_0^T \big|\partial_x V^n_s(X_s)- \partial_x V_s(X_s)\big|^2 |\gamma_s|^2 \dd  s
\Big].
\end{align*}
Since $\partial_x V,\partial_x V^n$ are jointly continuous in their variables and converge uniformly over compacts, $X$ is assumed to take values in a bounded set and $\gamma$ satisfies Assumption \ref{Assump:SDE-X-1BM}, then the RHS converges to zero as $n\to \infty$.
\end{proof}

\subsection{The Lions derivative}

\subsubsection{The Lions derivative and notational conventions}
\label{sec:LionsDerivative}

To consider the calculus for the mean-field setting one requires to build a suitable differentiation operator on the $2$-Wasserstein space. Among the several notions of differentiability of a functional $u$ defined over  $\cP_2(\bR^d)$ we follow the approach introduced by Lions in his lectures at Coll\`ege de France \cite{lions2007cours} and further developed in \cite{cardaliaguet2010notes}. A comprehensive presentation can be found in the joint monograph of Carmona and Delarue \cite{CarmonaDelarue2017book1},\cite{CarmonaDelarue2017book2}.

\begin{remark}[The intrinsic and the Lions derivative]
\label{rem:On Interincsics}
    We follow the measure derivative approach by Lions. We point that this notion of derivative is a stronger notion of derivative than the intrinsic measure derivative concept introduced in \cite{AlbeverioKondratievRockner1996} (see also the Appendix in \cite{RenRocknerWang2022linearization}). See \cite{RenWang2021DerivativeFormulas} for further details and characterisations on the relations of different types of derivatives in measures. 
\end{remark}
We consider a canonical lifting of the function $u:\cP_2(\bR^d) \to \bR$ to $\tilde{u}: L^2(\Omega,\cF,\bP;\bR^d) \ni X \to \tilde u (X) = u(Law(X)) \in \bR$, where  $L^2(\Omega,\cF,\bP;\bR^d)$ is a space of square integrable random variables. We can say that $u$ is $L$-differentiable at $\mu$, if $\tilde u$ is Frech\`et differentiable (in $L^2$) at some $X$, such that $\mu = \bP \circ X^{(-1)}$. Denoting the gradient by $D\tilde u$ and using a Hilbert structure of the $L^2$ space, we can identify $D\tilde u$ as an element its dual, $L^2$ itself. It was shown in \cite{cardaliaguet2010notes} that $D\tilde u$ is a $\sigma(X)$-measurable random variable and given by the function $Du(\mu, \cdot) : \bR^d \to \bR^d $, depending on the law of $X$ and satisfying $Du (\mu, \cdot) \in L^2(\bR^d, \cB(\bR^d),\mu; \bR^d)$. Hereinafter the  $L$-derivative of $u$ at $\mu$ is the map $\partial_\mu u(\mu,\cdot): \bR^d \ni v \to \partial_\mu u(\mu,v) \in \bR^d$, satisfying $D\tilde u(X) = \partial_\mu u(\mu,X)$.
We always denote $\partial_\mu u$ as the version of the $L$-derivative that is continuous in the product topology of all components of $u$. Moreover, let $\partial^2_{\mu}$ denote second derivative in measure and $\partial_v \partial_\mu u$ denote the derivative with respect to new variable arisen after applying derivative in measure. The notion of $\partial_\mu^2$ is chosen in favour of $\partial_{\mu\mu}^2$, as the latter may be hinting at the linear nature of $L$-derivative, that is not the case at all.

When we do the lift $\tilde{\xi}$ and $\hat{\xi}$ are the lifted random variables defined over the twin stochastic spaces $(\tilde{\Omega}, \tilde{\cF},\tilde{\bP})$ and $(\hat{\Omega}, \hat{\cF}, \hat{\bP})$ respectively, having the same law $\mu$.
We form a new probability space $(\Omega,\cF,\bP) \times(\tilde{\Omega}, \tilde{\cF}, \tilde{\bP})$ and consider random variables $\tilde{\xi}(\omega,\tilde{\omega}) = \xi(\tilde{\omega})$. Since this procedure is valid for the stochastic processes on respective stochastic bases $(\tilde{\Omega}, \tilde{\cF}, \tilde{\bF}=\tilde{(\cF_t)}_{t \in [0,T]},\tilde{\bP})$ and $(\hat{\Omega}, \hat{\cF}, \hat{\bF} = \hat{(\cF_t)}_{t \in [0,T]},\hat{\bP})$, one can consider $(X_t,\tilde{X}_t,\hat{X}_t)$ as a triple of independent identically distributed processes. The same applies to a finite amount of copy spaces $\:(\Omega^{l}, \cF^{l}, \bF^{l} = (\cF^{l}_t)_{t \in [0,T]},\bP^{l}),1 \leq l \leq N \in \bN$ to form a new product space and the respective tuple $(X_t,\tilde{X}_t,\hat{X}_t,X^{1}_t, \dots, X^{N}_t)$ remains mutually independent.

We will add the bases $(\tilde{\Omega}, \tilde \cF,\tilde{\bF}, \tilde{\bP})$ and $(\hat{\Omega},\hat \cF,\hat{\bF},\hat{\bP})$ and further use them as an environment for model representatives of the mean-field (each living in the distinct respective space), whereas sampling from the mean-field will give us $N$ particles living within respective spaces $(\Omega^{l}, \cF^{l}, \bF^{l} = (\cF^{l}_t)_{t \in [0,T]},\bP^{l}), \: 1\leq l \leq N,$ to be used within the propagation of chaos procedures below.  Hereinafter $\tilde{\bE}$ denotes the expectation acting on the model twin space $\tilde{\Omega}$.

Over the present work we omit the re-notation after adding some new probability spaces, but will assume that adding a copy processes automatically intimates the procedure described above. The common noise setting given in Section 4 requires a slightly variation of this approach which we disclose in the proof of Theorem \ref{theo:IW-PartialFlow-measureOnly}.

\subsubsection{Regularity in the measure argument}
In this section we recall several spaces of measure-regularity arising in the literature on Wasserstein calculus.
\begin{definition}
\label{def:FullC2}
We say the functional $u:\cP_2(\bR^d)\to \bR$ is Fully $\cC^{2}(\cP_2(\bR^d))$ if

\begin{enumerate}[i)]
	\item $u$ is $L$-differentiable at every point $\mu \in \cP_2(\bR^d)$, and $\partial_\mu u: \cP_2(\bR^d) \times \bR^d \to \bR$ has a $\mu$-version such that $\partial_\mu u(\mu,v)$ is joint-continuous at every pair $(\mu,v) \in \cP_2(\bR^d) \times \bR^d$;
	
	\item For any $\mu \in \cP_2(\bR^d)$, the map $v \mapsto \partial_\mu u(\mu,v)\in \bR^d$ is $\bR^d$-differentiable at every point $v\in \bR^d$; and $\partial_v\partial_\mu u: \cP_2(\bR^d) \times \bR^d \to \bR^{d \times d}$ has a $\mu$-version such that $\partial_v\partial_\mu u(\mu,v)$ is joint-continuous at every pair $(\mu,v) \in \cP_2(\bR^d) \times \bR^d$;
	
	\item For any $v \in \bR^d$, the map $\mu \mapsto \partial_\mu u(\mu,v): \in \bR^d$ is $L$-differentiable at every point $ \mu\in \bR^d$, and $\partial^2_{\mu} u: \cP_2(\bR^d) \times \bR^d \times \bR^d \to \bR^{d\times d}$ has a $\mu$-version such that $\partial_\mu^2 u(\mu,v,v^\prime)$ is joint-continuous at every triple $(\mu,v,v^\prime) \in \cP_2(\bR^d) \times \bR^d \times \bR^d.$
\end{enumerate}
\end{definition}

We next restrict the regularity with respect to the space variable arising after taking measure derivative to the $\Supp(\mu)$, since in our probabilistic setting the process sitting there obviously will not escape this set. This restriction comes from the  interplay with the \emph{Partial}-$\cC^2$-regularity of  \cite{CarmonaDelarue2017book1}*{Chapter 5.6.4}.
\begin{definition}
\label{def:PartiallyC2}
We say the function $u:\cP_2(\bR^d)\to \bR$ is Partially $\cC^{2}(\cP_2(\bR))$ if
\begin{enumerate}[i)]
	\item $u$ is $L$-differentiable at every point $\mu \in \cP_2(\bR^d)$, such that $\partial_\mu u$ has a $\mu$-version that is locally bounded and joint-continuous at every pair $(\mu,v), \mu \in \cP_2(\bR^d),~ v \in\Supp(\mu)$;
	\item For any $v \in \bR^d$, the map $\bR^d \ni v \mapsto \partial_\mu u(\mu,v) \in \bR$ is $\bR^d$-differentiable at every point $ v\in \Supp(\mu)$. Moreover, $\partial_v\partial_\mu u: \cP_2(\bR^d) \times \bR^d \to \bR^d \otimes \bR^d$ has a $\mu$-version such that $\partial_v\partial_\mu u(\mu,v)$ is locally bounded and joint-continuous at every pair $(\mu,v),~ \mu \in \cP_2(\bR^d),~ v \in\Supp(\mu)$.
\end{enumerate}
\end{definition}
This regularity level does not require a second Frech\'et derivative of the lift to exist. Looking ahead, we do not expect to receive any second-order terms in the expansion of the measure component, hence it is quite essential not to demand such a regularity (see Theorem \ref{theo:IW-FUllmeasureFlow-Proc+Measure-1BM} or 
Theorem \ref{theo:IW-FullFlow-measureOnly} below).

For the purpose of Theorem \ref{theo-Classic-Ito-Lions} we require the regularity in all components, and we introduce the following definition.
\begin{definition}
\label{def:C1211}
A function $u:[0,T] \times \bR^d \times \cP_2(\bR^d)\to \bR$ is $\cC^{1,2,(1,1)}$ if

\begin{enumerate}[i)]
    \item For any $\mu\in\cP_2(\bR^d)$ the map $[0,T]\times \bR^d \ni (t,x)\mapsto u_t(x,\mu)$ is $\cC^{1,2}$, and the maps $\partial_t u$, $\partial_x u$ and $\partial_{xx}^2 u$ are joint-continuous at every triple $ (t,x,\mu) \in [0,T] \times \bR^d \times \cP_2(\bR^d)$;

	\item For any $(t,x) \in [0,T]\times \bR^d$, the map $\mu \mapsto u_t(x,\mu)$ is continuously L-differentiable at every point $\mu \in \cP_2(\bR^d)$. Moreover, $\partial_\mu u: [0,T] \times \bR^d \times \cP_2(\bR^d) \times \bR^d \to \bR^d$ has a $\mu$-version such that $\partial_\mu u_t(\mu,v)$ is joint-continuous and locally bounded at every quadruple $(t,x,\mu,v)$, with $(t,x,\mu) \in [0,T] \times \bR^d \times \cP_2(\bR^d),~ v \in \Supp(\mu)$;
	
	\item For any $(t,x,\mu)\in [0,T] \times \bR^d \times \cP_2(\bR^d)$, the map $v \mapsto \partial_\mu u_t(x,\mu,v)$ is continuously $\bR^d$-differentiable at every point $v \in \bR^d$. Moreover, its derivative $\partial_v \partial_\mu u: [0,T] \times \bR^d \times \cP_2(\bR^d) \times \bR^d \to \bR^{d\times d}$ has a $\mu$-version such that $\partial_v\partial_\mu u_t(\mu,v)$ is continuous and locally bounded at every quadruple $(t,x,\mu,v)$, with $(t,x,\mu) \in [0,T] \times \bR^d \times \cP_2(\bR^d),~ v \in \Supp(\mu)$.
\end{enumerate}
\end{definition}

\subsubsection{The Empirical projection map}
We recall the concept of \emph{empirical projection map} given in \cite{chassagneux2014classical} which will be one of the main workhorses throughout our work.
\begin{definition}[Empirical projection of a map]
\label{def:Auxiliary-uN-for-EmpirialTrick}
Given $u: \cP_2(\bR^d) \to \bR$ and $N\in \bN$, define the empirical projection $u^N$ of $u$ via $u^N: (\bR^d)^N \to \bR$, such that $$u^N(x^1,\dots, x^N) := u \big(\bar{\mu}^N\big),
\quad \text{with}\quad
\bar{\mu}^N := \frac{1}{N}\sum\limits_{l=1}^N \delta_{x^l}
\quad\textrm{and}\quad 
x^l\in \bR^d ,~ l=1,\dots,N.
$$
\end{definition}
We recall \cite{CarmonaDelarue2017book1}*{Proposition 5.91 and Proposition 5.35} which relates the spatial derivative of $u^N$ with the $L$-derivative of $u$.
\begin{proposition}
\label{prop:DerivativeRelations-Space-2-Lions}
Let $u: \cP_2(\bR^d) \to \bR$ be \textit{Fully}-$\cC^2 (\cP_2(\bR^d))$, then, for any $N>1$, the empirical projection $u^N$ is $\cC^2$ on $(\bR^d)^N$ and for all $x^1,\cdots,x^N\in\bR^d$ we have the following differentiation rules
\begin{align*}
    \partial_{x^j}u^N(x^1, \dots, x^N) &= \frac{1}{N}\: \partial_\mu u\Big(\frac{1}{N}\sum_{l=1}^N \delta_{x^l},x^j\Big),
    \\
    \partial_{x^k}\partial_{x^j}u^N(x^1, \dots, x^N) 
    &
    = \frac{1}{N}\: \partial_v\partial_\mu u\Big(\frac{1}{N}\sum_{l=1}^N \delta_{x^l},x^j\Big) \1_{j=k} + \frac{1}{N^2} \:\partial^2_{\mu} u\Big(\frac{1}{N}\sum_{l=1}^N \delta_{x^l},x^j,x^k\Big).
\end{align*}
\end{proposition}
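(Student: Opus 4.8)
The plan is to reduce both differentiation rules to the defining relation between the Fréchet derivative of the lift and the Lions derivative, namely $D\tilde u(X)=\partial_\mu u(\mu,X)$, by realizing the empirical measure $\bar\mu^N$ as the law of an explicit step random variable. Since the underlying space is atomless and Polish, I would fix a measurable partition $\Omega=\bigsqcup_{l=1}^N A_l$ with $\bP(A_l)=1/N$ and set $X=\sum_{l=1}^N x^l \1_{A_l}$, so that $\Law(X)=\bar\mu^N$ and $u^N(x^1,\dots,x^N)=\tilde u(X)$. All derivatives of $u^N$ are then read off from directional Fréchet derivatives of $\tilde u$ along step-function perturbations of $X$.

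For the first-order rule I would perturb the $j$-th atom, $x^j\mapsto x^j+h$ with $h\in\bR^d$, which corresponds on the lifted side to $X\mapsto X+h\1_{A_j}$. Passing through the lift gives
\begin{align*}
\partial_{x^j}u^N\cdot h
=
D\tilde u(X)[h\1_{A_j}]
=
\bE\big[\partial_\mu u(\bar\mu^N,X)\cdot h\1_{A_j}\big]
=
\frac1N\,\partial_\mu u(\bar\mu^N,x^j)\cdot h,
\end{align*}
using that $X=x^j$ on $A_j$ and $\bP(A_j)=1/N$; since $h$ is arbitrary this yields the first identity.

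For the second-order rule I would differentiate the map $x^k\mapsto \tfrac1N\partial_\mu u(\bar\mu^N,x^j)$ and isolate the two channels through which $x^k$ enters. The free variable $v=x^j$ in $\partial_\mu u(\bar\mu^N,v)$ only feels a change in $x^k$ when $k=j$, producing the term $\tfrac1N\,\partial_v\partial_\mu u(\bar\mu^N,x^j)\1_{j=k}$ via part ii) of Definition \ref{def:FullC2}. The measure slot $\bar\mu^N$ always depends on $x^k$; differentiating $\mu\mapsto \partial_\mu u(\mu,v)$ (with $v$ frozen, componentwise since this map is $\bR^d$-valued) by the same lifting argument contributes $\tfrac1N\,\partial^2_\mu u(\bar\mu^N,v,x^k)$, which after the outer factor $1/N$ and the substitution $v=x^j$ becomes $\tfrac1{N^2}\partial^2_\mu u(\bar\mu^N,x^j,x^k)$. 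Summing the two channels gives the stated formula.

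The main obstacle is not the formal computation but the rigorous justification that $u^N\in\cC^2((\bR^d)^N)$, i.e.\ that one may legitimately differentiate under the expectation and that the resulting maps are continuous. This is exactly where the Fully-$\cC^2$ hypotheses enter: the joint continuity and local boundedness of $\partial_\mu u$, $\partial_v\partial_\mu u$ and $\partial^2_\mu u$ supply a dominated-convergence argument to pass the difference quotients of $\tilde u$ to the limit along the step perturbations, and they force the candidate derivatives to depend continuously on $(x^1,\dots,x^N)$ through the continuous map $(x^1,\dots,x^N)\mapsto \bar\mu^N\in\cP_2(\bR^d)$. A secondary delicate point is the iteration of the lift in the second-order step: one must check that $\partial_\mu u(\cdot,v)$ inherits enough Fréchet-differentiability in $\mu$ for the first-order argument to be re-applied to it, which is precisely the content of part iii) of Definition \ref{def:FullC2}.
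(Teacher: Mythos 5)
Your proposal is correct and is, in substance, the paper's proof: the paper does not argue Proposition \ref{prop:DerivativeRelations-Space-2-Lions} itself but recalls it from \cite{CarmonaDelarue2017book1}*{Propositions 5.35 and 5.91}, and the argument there is exactly your construction --- realize $\bar\mu^N$ as the law of the step random variable $X=\sum_{l=1}^N x^l\1_{A_l}$ over an $N$-cell uniform partition of the atomless space, read the first-order rule off the identification $D\tilde u(X)=\partial_\mu u(\bar\mu^N,X)$ applied to the direction $h\1_{A_j}$, and get the second-order rule by re-applying this componentwise to $\mu\mapsto\partial_\mu u(\mu,v)$ together with the $v$-derivative on the diagonal. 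The two delicate points you flag are indeed the only ones: when $k=j$ both the measure slot and the $v$-slot move at once, and the joint continuity of $\partial_v\partial_\mu u$ and $\partial^2_\mu u$ (parts ii) and iii) of Definition \ref{def:FullC2}) is precisely what lets you split the increment into the two channels and sum them, as well as what makes the resulting partials continuous so that $u^N\in\cC^2((\bR^d)^N)$.
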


\subsection{It\^o-Lions chain rule along a full flow of measures (classic)}

Alongside $(X_t)_{t \in [0,T]}$ given by \eqref{eq:ProcessX-fullFlowMeasures} we introduce another process $(Y_t)_{t \in [0,T]}$ and its law $(\mu_t)_{t\in[0,T]}$. Take $W$ as a  $d$-dimensional Brownian motion and let $(Y_t)_{t \in [0,T]}$ satisfy the dynamics 
\begin{align}
\label{eq:GenericYprocess-FullFlow22}
\dd Y_t = b_t\dd t + \sigma_t \dd W_t,\quad \textrm{and initial condition }Y_0, 
\end{align}
where we denote the law of $Y_t$ by $\mu_t:=\bP\circ Y^{(-1)}_t,~t\in[0,T]$ and the associated coefficients satisfy the below assumption.
\begin{assumption}
\label{Assump:SDE-Y-mu-1BM}
    Let $Y_0 \in L^2(\Omega,\cF_0,\bP)$ ($Y_0$ is $\cF_0$-measurable and independent of $W_t, ~ t \in [0,T]$). Take $b:\Omega\times [0,T]\to \bR^d$ and $\sigma:\Omega\times [0,T]\to\bR^{d \times d}$ such that $(b_t)_{t \in [0,T]}, (\sigma_t)_{t \in [0,T]}$ are $\bF$-progressively measurable processes and satisfy
    \begin{align*}
        \bE\big[ \int_0^T |b_s|^2 + |\sigma_s|^4 \dd s \big] < \infty.
    \end{align*}
\end{assumption}

The requirements of higher integrability of the involved coefficients stems from the proof methodology we implement. Namely, the convergence of the formula for the mollified version (see Step 1 of the Proof of \ref{theo:IW-FullFlow-measureOnly}) follows from these higher moment bounds (for more details see \cite[Proof of Lemma 5.95]{CarmonaDelarue2017book1}). At the same time we emphasise that one can reduce the integrability of $b$ and $\sigma$ at the expense of asking for higher moments for measure derivative terms.

\begin{remark}
One can take ``closed-loop'' type dependence for the coefficients, i.e.~coefficients of the form $\hat{b}_t := b_t(Y_t,\mu_t)$ and $ \hat{\sigma}_t:=\sigma_t(Y_t,\mu_t)$, since our setting covers all the special cases. In fact, an existence \& uniqueness result for the SDE for $Y$ allows to freeze the components inside the coefficients and with sufficient integrability the ``frozen'' SDE follows the dynamics \eqref{eq:GenericYprocess-FullFlow22}. 
\end{remark}
For completeness we recall the It\^o-Lions formula \cite{CarmonaDelarue2017book1}*{Proposition 5.102} for deterministic maps (see also \cite{Wang2021Imagedependent}) following the framework Section \ref{sec:LionsDerivative}, recall that $\tilde{\bE}$ denotes the expectation acting on the model twin space $(\tilde{\Omega}, \tilde{\bF}, \tilde{\bP})$ and let the processes $(\tilde{Y}_t, \tilde{b}_t, \tilde{\sigma}_t)_{t\in[0,T]}$ be the twin processes of $(Y_t,b_t,\sigma_t)_{t\in[0,T]}$ respectively living within.

\begin{theorem}
\label{theo-Classic-Ito-Lions}
Let $u:[0,T] \times \bR^d \times \cP_2(\bR^d) \to \bR$ be $\cC^{1,2,(1,1)}$. Furthermore, for any compact $K \subset \bR^d \times \cP_2(\bR^d)$ we have
\begin{align*}
	\sup_{(t,x,\mu) \in [0,T]\times K}\Big\{\int_{\bR^d}\Big[|\partial_\mu u_t(x,\mu, v)|^2 + |\partial_v\partial_\mu u_t(x,\mu, v)|^2\Big]\mu(\dd v)\Big\}  < \infty.
\end{align*}
Take $(X_t)_{t \in [0,T]}$ given by \eqref{eq:ProcessX-fullFlowMeasures} under Assumption \ref{Assump:SDE-X-1BM} and take $\mu$ associated to \eqref{eq:GenericYprocess-FullFlow22} under Assumption \ref{Assump:SDE-Y-mu-1BM}. Then $(u_t(X_t,\mu_t))_{t\in[0,T]}$ is an It\^o process satisfying $\bP$-a.s.

\begin{align*}
u_T(X_T,\mu_T)  -&  u_0(X_0,\mu_0)\\
=&  
\int_0^T \partial_t u_s(X_s,\mu_s) \dd s  
+ \int_0^T \Big[ \partial_xu_s(X_s,\mu_s) \cdot\beta_s  \dd s
+ \int_0^T \partial_x u_s(X_s,\mu_s) \cdot \gamma_s \dd W_s\\
&
+ \int_0^T\frac{1}{2}~\trace \big\{\partial_{xx}^2 u_s(X_s,\mu_s)  ~\gamma_s (\gamma_s)^\intercal  \big\} \dd s
 \\
&
+ \int_0^T \tilde{\bE} \Big[\partial_\mu u_s(X_s,\mu_s,\tilde{Y}_s)\cdot \tilde{b}_s\Big]\dd s
+ \int_{0}^{T}\frac{1}{2}~\tilde{\bE}\Big[\trace \big\{\partial_v\partial_\mu u_s(X_s,\mu_s, \tilde{Y}_s)~\tilde{\sigma}_s(\tilde{\sigma}_s)^{\intercal}\big\}\Big] \dd s.
\end{align*}
\end{theorem}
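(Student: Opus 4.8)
The plan is to reduce the measure argument to a finite-dimensional one via the empirical projection of Definition \ref{def:Auxiliary-uN-for-EmpirialTrick}, apply the classical multidimensional Itô formula, and then pass to the mean-field limit. Concretely, I would fix $N\in\bN$ and, following Section \ref{sec:LionsDerivative}, introduce i.i.d.\ copies $(Y^l,b^l,\sigma^l)_{l=1}^N$ of $(Y,b,\sigma)$ living on independent copy spaces, each $Y^l$ obeying dynamics of the form \eqref{eq:GenericYprocess-FullFlow22} driven by its own independent Brownian motion. Forming the random empirical measure $\bar\mu^N_t=\tfrac1N\sum_{l=1}^N\delta_{Y^l_t}$, I consider the deterministic map $u^N_t(x,y^1,\dots,y^N)=u_t(x,\bar\mu^N)$; as a finite-dimensional function of its Euclidean arguments this is $\cC^{1,2}$, so the classical Itô formula applies to $(u^N_t(X_t,Y^1_t,\dots,Y^N_t))_{t\in[0,T]}$. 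Note that the measure flow $\mu_t$ is deterministic, so no random-field (Itô--Wentzell) machinery is needed here; plain Itô suffices.

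I would then rewrite the expansion in terms of Lions derivatives. The $\partial_t$ and $x$-derivative terms are immediate; for the $y^j$-derivatives I substitute the identities of Proposition \ref{prop:DerivativeRelations-Space-2-Lions}, so the first-order $y^j$-drift contributions assemble into the empirical average $\tfrac1N\sum_{j=1}^N\partial_\mu u_s(X_s,\bar\mu^N_s,Y^j_s)\cdot b^j_s$, and similarly for the $y^j$-stochastic integrals. For the second-order $y$-block, independence of the copies kills every cross term $j\ne k$ (their bracket vanishes), leaving only the diagonal, which splits via Proposition \ref{prop:DerivativeRelations-Space-2-Lions} into a $\tfrac1N\sum_j\tfrac12\trace\{\partial_v\partial_\mu u_s(\cdots)\sigma^j_s(\sigma^j_s)^\intercal\}$ piece and a remainder carrying a $\tfrac1{N^2}$ prefactor against only $N$ summands. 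The $X$-block yields the usual $\tfrac12\trace\{\partial^2_{xx}u\,\gamma\gamma^\intercal\}$ term, and since $X$ is independent of the copies there is no $x$–$y^j$ cross term.

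Finally I would send $N\to\infty$. By the law of large numbers $\bar\mu^N_t\to\mu_t$ in $W_2$, so by joint continuity of $u,\partial_\mu u,\partial_v\partial_\mu u$ the left-hand side converges to $u_T(X_T,\mu_T)-u_0(X_0,\mu_0)$, and each empirical average $\tfrac1N\sum_j\partial_\mu u_s(X_s,\bar\mu^N_s,Y^j_s)\cdot b^j_s$ converges to $\tilde\bE[\partial_\mu u_s(X_s,\mu_s,\tilde Y_s)\cdot\tilde b_s]$, the diagonal second-order piece to $\tfrac12\tilde\bE[\trace\{\partial_v\partial_\mu u_s(X_s,\mu_s,\tilde Y_s)\tilde\sigma_s(\tilde\sigma_s)^\intercal\}]$, and the $Y^j$-stochastic integrals to zero. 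Crucially the $\tfrac1{N^2}$-remainder is $O(1/N)$ and vanishes, which is precisely why no $\partial^2_\mu u$ term survives and why the $\cC^{1,2,(1,1)}$ regularity of Definition \ref{def:C1211} suffices.

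The main obstacle will be making these limits rigorous. Two points need care: (i) the finite-dimensional Itô step implicitly requires $u^N$ to be $\cC^2$, which under only $\cC^{1,2,(1,1)}$ is delicate since the second $y^j$-derivative formally involves $\partial^2_\mu u$; I would circumvent this by first mollifying $u$ in the measure argument to obtain Fully-$\cC^2(\cP_2(\bR^d))$ approximants (so Proposition \ref{prop:DerivativeRelations-Space-2-Lions} applies verbatim), proving the identity for them, and then removing the mollification — the limiting formula contains no $\partial^2_\mu u$ because that term was already shown to vanish in $N$. (ii) Interchanging $\lim_N$ with the time- and stochastic integrals requires uniform control, for which I would localize $X$ by the stopping times $\tau^m=\inf\{t>0:|X_t|>m\}$ exactly as in the proof of Theorem \ref{theo:ExtendItoWentzell}, use the uniform-over-compacts bound $\sup_K\int(|\partial_\mu u|^2+|\partial_v\partial_\mu u|^2)\mu(dv)<\infty$ from the hypothesis together with Assumption \ref{Assump:SDE-Y-mu-1BM}, and invoke dominated convergence and the Itô isometry, in the same style as the limiting argument already carried out for Theorem \ref{theo:ExtendItoWentzell}.
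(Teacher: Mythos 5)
Your skeleton---empirical projection, finite-dimensional It\^o, Proposition \ref{prop:DerivativeRelations-Space-2-Lions}, then a propagation-of-chaos limit with localization---is indeed the method behind this statement: the paper does not prove Theorem \ref{theo-Classic-Ito-Lions} itself (it is recalled from \cite{CarmonaDelarue2017book1}*{Proposition 5.102}), but its own analogues, Theorems \ref{theo:IW-FullFlow-measureOnly} and \ref{theo:IW-FUllmeasureFlow-Proc+Measure-1BM}, are proved exactly along these lines. However, your fix for the missing $\partial^2_\mu u$ is a genuine gap. You propose to ``mollify $u$ in the measure argument to obtain Fully-$\cC^2(\cP_2(\bR^d))$ approximants so that Proposition \ref{prop:DerivativeRelations-Space-2-Lions} applies verbatim.'' No such mollification is available: the smoothings that exist on $\cP_2(\bR^d)$ (pushforward $u(\mu\circ\rho^{-1})$ and Gaussian smoothing $u(\mu*\phi_{\varepsilon,G})$, the ones used in Step 1 of the proof of Theorem \ref{theo:IW-FullFlow-measureOnly}) regularize supports and give uniform continuity and boundedness, but they create no new $L$-differentiability---$\partial^2_\mu$ of the smoothed map exists only if $\partial^2_\mu u$ does. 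Constructing Fully-$\cC^2$ approximants of a merely partially-$\cC^2$ map with convergence of $\partial_\mu$ and $\partial_v\partial_\mu$ is essentially as hard as the theorem. The actual resolution, both in \cite{CarmonaDelarue2017book1} and in Step 2 of the paper's proof of Theorem \ref{theo:IW-FullFlow-measureOnly}, is different: one mollifies the \emph{finite-dimensional} projection $u^N$ by convolution on $(\bR^d)^N$, obtaining $u^{N,n}\in\cC^2((\bR^d)^N)$ with no measure-regularity needed, expresses $\partial^2_{y^iy^i}u^{N,n}$ through the first-order object $\partial_\mu u^N$ hit by a derivative of the kernel, and then integrates by parts and uses $W_2$-perturbation estimates (the $T^{1,N}_{n,i},T^{2,N}_{n,i},T^{11,N}_{n,i},T^{12,N}_{n,i}$ decomposition) to identify the limit as $\partial_v\partial_\mu u$ up to remainders of size $n\varepsilon_N$. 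That last bound forces a definite order of limits ($N\to\infty$ before $n\to\infty$); none of this machinery, which is the technical heart of the argument, appears in your proposal.

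A second gap: your claim that ``plain It\^o suffices'' for $u^N_t(X_t,Y^1_t,\dots,Y^N_t)$ overlooks the mixed derivatives. The classical It\^o formula for the joint process requires $u^N$ to be $\cC^2$ jointly in $(x,y^1,\dots,y^N)$, including the cross-derivatives $\partial_x\partial_{y^j}u^N=\frac{1}{N}\partial_x\partial_\mu u$, and Definition \ref{def:C1211} never assumes $\partial_x\partial_\mu u$ exists (it is a hypothesis only in the conditional-flow setting, Theorem \ref{theo-Classic-Ito-Lions-PartialFlow}). The fact that $\langle X,Y^j\rangle\equiv 0$ makes those terms absent from the resulting formula, but it does not waive the regularity needed to invoke the theorem, and a measure-argument mollification would not manufacture this mixed regularity either. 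This is precisely why the paper's own joint chain rule (Theorem \ref{theo:IW-FUllmeasureFlow-Proc+Measure-1BM}) never applies It\^o jointly in $(x,\mu)$: it splits a time grid into $I_1^{(K)}+I_2^{(K)}$, expanding $x\mapsto u(x,\cdot)$ at frozen measure via It\^o--Wentzell and $\mu\mapsto u(\cdot,\mu)$ at frozen $(t,x)$ via Remark \ref{rem:IgnoreItoWentzelAndJustApplyLions}, and only then sends the mesh to zero. Restricting your empirical-projection argument to the measure variable alone and handling the $x$-expansion by this two-scale splitting is what would close the gap.
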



%
%
%
%

\section{It\^o-Wentzell-Lions chain rule with a full flow of measures}
\label{sec:fullflow}

As it was shown in \cite{chassagneux2014classical}, one can apply an approach based on empirical projections to built the chain rule. This approach very convenient since with it we are able  require (loosely) the same regularity as in Theorem \ref{theo-Classic-Ito-Lions} above. One can notice that the  second measure derivative term of the formulae appearing within measure argument expansion vanishes when applying the limit procedure. Nonetheless, in order to argue via Taylor expansions the second derivative in measure has to exist which is a very strong assumptions. We can avoid this requirement using this technique.

Let $u : \Omega \times [0,T] \times\bR^d\times \cP_2(\bR^d)\to \bR$ be a random field, satisfying the expansion 
\begin{align}
\label{eq:RandomField-1BM}
\dd u_t(x,\mu) = \phi_t(x,\mu)\dd t+\psi_t(x,\mu)\cdot \dd W_t,\quad u_0(x,\mu) = f(x,\mu),
\end{align}
where $f(x,\mu): \bR^d \times \cP_2(\bR^d) \to \bR$ is a deterministic function, $(W_t)_{t \in [0,T]}$ is a $d$-dimensional $\bF$-Brownian motion, $(\phi, \psi): \Omega\times [0,T] \times \bR^d \times \cP_2(\bR^d) \to \bR \times \bR^{d}$ are $\bF$-progressively measurable processes. 

Throughout we will work with the law $(\mu_t)_{t\in[0,T]}$ of the process  $(Y_t)_{t\in[0,T]}$ given in \eqref{eq:GenericYprocess-FullFlow22} under Assumption \ref{Assump:SDE-Y-mu-1BM}. In the second portion of the section, we additionally work with $(X_t)_{t\in[0,T]}$ solution to \eqref{eq:ProcessX-fullFlowMeasures} under under Assumption \ref{Assump:SDE-X-1BM}.

\subsection{It\^o-Wentzell-Lions formula for measure functionals}
We start by discussing the measurability of the involved structures and for which the following remark addresses the issue for the whole manuscript.
\begin{remark}[On measurability]
 \label{rem:measurability-main-remark}
 The measurability of the measure expansion component is deeply discussed in \cite{CarmonaDelarue2017book1}*{Remarks 5.101 and 5.103}. Within the present work we are interested in conditioning on the field noise, the matter of which is discussed in \cite{CarmonaDelarue2017book2}*{Section 4.3}. We refer the reader to this monograph for comprehensive and detailed approach.
\end{remark}

\subsubsection{It\^o-Wentzell expansion}
In this subsection we work with the It\^o random field \eqref{eq:RandomField-1BM} and we keep $x \in \bR^d$ at some fixed value for the whole subsection and hereinafter we will omit its presence within $u, \phi$ and $\psi$, i.e.~we set
\begin{align*}
(t,x,\mu)\in[0,T]\times \bR^d\times \cP_2(\bR^d) \ \ 
u_t(\mu):=u_t(x,\mu), \quad
\phi_t(\mu):=\phi_t(x,\mu), 
\quad\textrm{and}\quad
\psi_t(\mu):=\psi_t(x,\mu). 
\end{align*}
Similarly to the full- and partial-$\cC^2$ maps concept in Definition \ref{def:FullC2} and \ref{def:PartiallyC2}, we introduce the concept of a \emph{partially-$\cC^{2}$ It\^o random field}, describing the field's regularity in the measure component and we coin it  \emph{RF-Partially $\cC^{2}$}.
\begin{definition}
\label{def:PartialC2-RandomField-FullFlow}
We say the random field $u:\Omega\times [0,T] \times \cP_2(\bR^d)\to \bR$ given in \eqref{eq:RandomField-1BM} (for some $x\in \bR^d$ fixed) is \emph{RF-Partially}-$\cC^{2}$ if 
\begin{enumerate}[i)]
    \item For any $\mu \in \cP_2(\bR^d)$,  $(u_t(\mu))_{t\in[0,T]}$ is a continuous adapted process taking values over $\bR^d$ and $(\phi_t(\mu))_{t \in [0,T]},(\psi_t(\mu))_{t \in [0,T]}$ are $\bF$-progressively measurable processes with values in $\bR$ and $\bR^{d}$ respectively;
    
    \item For almost all $t \in[0,T]$, the maps $\mu\mapsto \phi_t(\mu),~ \mu\mapsto \psi_t(\mu)$ are $\bP$-a.s.~continuous in the topology induced by the  Wasserstein metric for any $\mu \in \cP_2(\bR^d)$;
    
	\item For any $t\in[0,T]$ the map $\mu\mapsto u_t(\mu)$ is $\bP$-a.s.~continuous in topology, induced by Wasserstein metric and $L$-differentiable $\bP$-a.s. at every $\mu \in \cP_2(\bR^d).$ Moreover, $\partial_\mu u_t(\mu,v)$ has a $\mu$-version such that $\partial_\mu u_t(\mu,v)$ is $\bP$-a.s.~joint-continuous at every triple $(t,\mu,v)$ with $(t,\mu) \in [0,T] \times \cP_2(\bR^d),~v \in \Supp(\mu), ~\bP$-a.s.;
	
	\item For any $(t,\mu) \in [0,T]\times \cP_2(\bR^d)$ the map $v \mapsto \partial_\mu u_t(\mu,v)$ is $\bR^d$-differentiable $\bP$-a.s.~at every $ v\in \Supp(\mu)$. Moreover, the map $\partial_v\partial_\mu u_t(\mu,v)$ has a $\mu$-version such that $\partial_v\partial_\mu u_t(\mu,v)$ is $\bP$-a.s.~joint-continuous at every triple $(t,\mu,v)$, with $(t,\mu) \in [0,T] \times \cP_2(\bR^d),~v \in \Supp(\mu), ~\bP$-a.s..
\end{enumerate}
\end{definition}

\begin{remark}
In contrast with \cite{CarmonaDelarue2017book1,CarmonaDelarue2017book2}, where the local boundedness condition is present in the regularity conditions, we restrict ourselves to the continuous version of the Lions derivative from the beginning, hence local boundedness is automatically implied by the continuity. 
\end{remark}

The main proof mechanics relies on the projection over empirical distributions technique as explored in \cites{chassagneux2014classical,CarmonaDelarue2017book1}.  Recall  that $\tilde{\bE}$ denotes the expectation acting on the model twin space $(\tilde{\Omega}, \tilde{\bF}, \tilde{\bP})$ and let the processes $(\tilde{Y}_t, \tilde{b}_t, \tilde{\sigma}_t)_{t\in[0,T]}$ be the twin processes of $(Y_t,b_t,\sigma_t)_{t\in[0,T]}$ respectively living within (see Section \ref{sec:LionsDerivative}).

\begin{theorem} 
\label{theo:IW-FullFlow-measureOnly}
Let $u$ be the \emph{RF-Partially}-$\cC^{2}$ It\^o random field \eqref{eq:RandomField-1BM} (where $x\in \bR^d$ is fixed and omitted throughout, also for $\phi$ and $\psi$). Assume for any compact $K\subset \cP_2(\bR^d)$ and for any $0 \leq t<T$ that 
\begin{align*}
	\int_0^t \sup_{\mu \in K}\Big\{|\phi_s(\mu)| + |\psi_s(\mu)|^2 \Big\}\dd s  < \infty,\quad \bP \textit{-a.s.},
\end{align*}
and
\begin{align}
\label{cond:MeasureOnly-1BM-u}
 \sup_{(t,\mu) \in [0,T]\times K}\Big\{\int_{\bR^d}\Big[|\partial_\mu u_t(\mu, v)|^2 + |\partial_v\partial_\mu u_t(\mu, v)|^2\Big]\mu(\dd v)\Big\}  < \infty,\quad \bP \textit{-a.s..}
\end{align}

Let $(\mu_t)_{t \in [0,T]}$ be the law of the solution to \eqref{eq:GenericYprocess-FullFlow22} satisfying Assumption \ref{Assump:SDE-Y-mu-1BM}. 
Then $(u_t(\mu_t))_{t\in[0,T]}$ is an It\^o process $\bP$-a.s. satisfying the expansion
\begin{align*}
u_T(\mu_T) - u_0(\mu_0) &=\int_0^T\phi_s(\mu_s) \dd s + \int_0^T\psi_s(\mu_s) \cdot \dd W_s
+
\int_0^T \tilde{\bE} \Big[ \partial_\mu u_s(\mu_s,\tilde{Y}_s) \cdot \tilde{b}_s \Big]\dd s \\
&
\qquad \qquad + 
 \int_{0}^{T} \frac{1}{2} ~\tilde{\bE} \Big[  \trace \big\{\partial_v\partial_\mu u_s(\mu_s, \tilde{Y}_s) 
~\tilde{\sigma_s}(\tilde{\sigma}_s)^{\intercal} \big\} \Big]\dd s.
\end{align*}
\end{theorem}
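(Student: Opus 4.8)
The plan is to reduce the statement to the classical It\^o-Wentzell formula (Theorem \ref{theo-Classic-Ito-Wentzell}/\ref{theo:ExtendItoWentzell}) through the empirical projection device of Proposition \ref{prop:DerivativeRelations-Space-2-Lions}. First I would introduce $N$ i.i.d. copies $(Y^1,\dots,Y^N)$ of the process $Y$, each $Y^l$ carried by an independent copy of the probability space and driven by an independent Brownian motion $W^l$ (all independent of the field noise $W$), with coefficients $(b^l,\sigma^l)$. Setting $\bar\mu^N_t := \frac1N\sum_{l=1}^N\delta_{Y^l_t}$, the law of large numbers in the Wasserstein space (using the second-moment bound from Assumption \ref{Assump:SDE-Y-mu-1BM}) gives $\bar\mu^N_t\to\mu_t$ $\bP$-a.s. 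The identity $u_t(\bar\mu^N_t)=u^N_t(Y^1_t,\dots,Y^N_t)$ then turns the measure-flow problem into one for the random field $u^N_t$ on $(\bR^d)^N$, whose spatial derivatives are expressed through $\partial_\mu u$, $\partial_v\partial_\mu u$ and $\partial^2_\mu u$ by Proposition \ref{prop:DerivativeRelations-Space-2-Lions}.

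The key computation is to apply the It\^o-Wentzell formula to $u^N_t(Y^1_t,\dots,Y^N_t)$, regarding $u^N$ as a random field driven by $W$ and $(Y^1,\dots,Y^N)$ as the driving process driven by the independent $(W^1,\dots,W^N)$ (formally embedding both into a common Brownian motion). Because $W$ is independent of every $W^l$, the cross-variation term of Theorem \ref{theo-Classic-Ito-Wentzell} (between the field martingale $\int\psi^N dW$ and the particle martingales $\int\sigma^l dW^l$) vanishes identically, and the second-order It\^o term retains only its block-diagonal $l=l'$ contributions. Substituting the rules of Proposition \ref{prop:DerivativeRelations-Space-2-Lions}, one obtains, besides the field terms $\int_0^T\phi_s(\bar\mu^N_s)\,ds+\int_0^T\psi_s(\bar\mu^N_s)\cdot dW_s$, a drift $\int_0^T\frac1N\sum_l\partial_\mu u_s(\bar\mu^N_s,Y^l_s)\cdot b^l_s\,ds$, a particle-diffusion martingale $\int_0^T\frac1N\sum_l\partial_\mu u_s(\bar\mu^N_s,Y^l_s)\cdot\sigma^l_s\,dW^l_s$, a second-order term $\frac12\int_0^T\frac1N\sum_l\trace\{\partial_v\partial_\mu u_s(\bar\mu^N_s,Y^l_s)\sigma^l_s(\sigma^l_s)^\intercal\}\,ds$, and a residual carrying $\frac1{N^2}\partial^2_\mu u$.

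I would then pass to the limit $N\to\infty$. The particle-diffusion martingale has quadratic variation of size $O(1/N)$ (the $1/N^2$ prefactor against only $N$ summands), so it vanishes in $L^2$; the $\partial^2_\mu u$ residual is likewise $O(1/N)$ and vanishes -- this is precisely why the second Lions derivative never enters the final formula. The remaining empirical averages converge by propagation of chaos: since $\bar\mu^N_s\to\mu_s$ and the integrands are jointly continuous, $\frac1N\sum_l\partial_\mu u_s(\bar\mu^N_s,Y^l_s)\cdot b^l_s\to\tilde{\bE}[\partial_\mu u_s(\mu_s,\tilde Y_s)\cdot\tilde b_s]$ and similarly for the $\partial_v\partial_\mu u$ term, while $\phi_s(\bar\mu^N_s)\to\phi_s(\mu_s)$, $\psi_s(\bar\mu^N_s)\to\psi_s(\mu_s)$ and $u_t(\bar\mu^N_t)\to u_t(\mu_t)$. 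Exchanging the limit with the time and stochastic integrals is justified by the integrability bounds on $\phi,\psi$ and on $\partial_\mu u,\partial_v\partial_\mu u$ in \eqref{cond:MeasureOnly-1BM-u}, together with the moment bounds of Assumption \ref{Assump:SDE-Y-mu-1BM} and dominated convergence; the field stochastic integral is controlled by It\^o's isometry exactly as in the proof of Theorem \ref{theo:ExtendItoWentzell}.

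The main obstacle is regularity. Under the \emph{RF-Partially}-$\cC^2$ hypothesis the empirical projection $u^N$ is only $\cC^1$ (its second spatial derivatives require $\partial^2_\mu u$, which is not assumed), and $\psi^N$ need not even be spatially differentiable, so Theorem \ref{theo-Classic-Ito-Wentzell} cannot be invoked directly. I would therefore run the argument above first under the stronger assumption that $u$ is \emph{Fully}-$\cC^2$ in the sense of Definition \ref{def:FullC2} (so that $u^N\in\cC^2$ and every step is rigorous), and only afterwards remove it by mollifying $u,\phi,\psi$ in the measure argument to produce Fully-$\cC^2$ approximants, in the same spirit as the convolution step of Theorem \ref{theo:ExtendItoWentzell}. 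Since the limiting formula contains \emph{no} $\partial^2_\mu u$ term, it suffices that the mollified $\partial_\mu u^\varepsilon$ and $\partial_v\partial_\mu u^\varepsilon$ converge to $\partial_\mu u$ and $\partial_v\partial_\mu u$ locally uniformly, while $\partial^2_\mu u^\varepsilon$ may blow up -- it is always annihilated by the $N\to\infty$ limit taken before $\varepsilon\to0$. Making this double-limit interchange precise, and establishing the propagation-of-chaos convergences inside the time integral using only the stated integrability, is the technical heart of the proof.
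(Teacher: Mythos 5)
Your outline --- empirical projection, vanishing of the field--particle cross-variation by independence of the noises, $O(1/N)$ decay of the particle martingale and of the $\partial^2_\mu u$ residual, propagation of chaos, and the double limit with $N\to\infty$ taken before the mollification parameter --- is exactly the skeleton of the paper's proof, and you correctly located the obstruction: under \emph{RF-Partially}-$\cC^2$ the projection $u^N$ is not $\cC^2$. The genuine gap is in how you propose to remove it. You assert that $u,\phi,\psi$ can be mollified ``in the measure argument'' into \emph{Fully}-$\cC^2$ approximants (Definition \ref{def:FullC2}) whose first-order Lions derivatives converge locally uniformly. No such smoothing operation exists: $\cP_2(\bR^d)$ carries no convolution structure or reference measure, and the two operations that are actually available (and are the ones used in Step 1 of the paper's proof) --- the pushforward $u(\mu\circ\rho^{-1})$ and the Gaussian smoothing $u(\mu*\phi_{\varepsilon,G})$ of the measure --- yield boundedness, uniform continuity and full support, but never raise the order of $L$-differentiability: they cannot manufacture a $\partial^2_\mu u^\varepsilon$, nor make $\psi^\varepsilon$ $L$-differentiable. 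The natural measure-smooth candidate $\mu\mapsto\int u^{N,n}\,d\mu^{\otimes N}$ does exist, but verifying that its derivatives converge to $\partial_\mu u$ and $\partial_v\partial_\mu u$ is precisely the estimate you are deferring, so nothing is gained. Note also that even your intermediate ``Fully-$\cC^2$'' stage is not self-contained: applying Theorem \ref{theo:ExtendItoWentzell} to $u^N(Y^1_t,\dots,Y^N_t)$ requires the projected coefficient $\psi^N$ to be spatially $\cC^1$, i.e.\ $\psi$ to be $L$-differentiable, which Fully-$\cC^2$ regularity of $u$ alone does not give; some spatial mollification of $\phi^N,\psi^N$ is unavoidable in any case.

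The paper resolves the obstruction differently, and this is the idea missing from your proposal: mollify the \emph{finite-dimensional} empirical projection, $u^{N,n}:=u^N$ convolved with a scaled kernel on $(\bR^d)^N$ (likewise $\phi^{N,n},\psi^{N,n}$), which is $\cC^2$ for free and satisfies \eqref{eq:RedudeIntegrabilityClassicItoWentzell}; then rewrite the second derivative $N\,\partial^2_{y^iy^i}u^{N,n}$ by integrating by parts against the kernel and adding/subtracting terms with the $i$-th atom frozen, so that it is expressed through $\partial_\mu u$ and $\partial_v\partial_\mu u$ evaluated at perturbed empirical measures only, plus remainders ($T^{12,N}_{n,i}$ and $T^{2,N}_{n,i}$ in the paper) of size $\varepsilon_N$ and $n\,\varepsilon_N$. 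The second Lions derivative is thus never formed, not even for the approximants, and the blow-up in $n$ is harmless because the limits are ordered as in your plan: $N\to\infty$ first, giving the bound $\sup_{t}|\bE^{1,\dots,N}[\Delta^{N,n}_t]|\le\varepsilon_n+(1+n)\varepsilon_N$, then $n\to\infty$. To make your argument complete you would have to replace the unproven ``mollification to Fully-$\cC^2$'' step by this finite-dimensional mollification combined with the integration-by-parts identity; as written, the proposal rests on an approximation result that is not established and, in this generality, not known to hold.
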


\begin{remark}
\label{rem:IgnoreItoWentzelAndJustApplyLions}
Following from Theorem \ref{theo-Classic-Ito-Lions} we have that for fixed $r\in[0,T],~t\mapsto u(r,\mu_t)$
 $\bP$-a.s. satisfies the expansion
\begin{align*}
u_r(\mu_T) - u_r(\mu_0) &=
\int_0^T \tilde{\bE} \Big[ \partial_\mu u_r(\mu_s,\tilde{Y}_s) \cdot \tilde{b}_s \Big]\dd s \\
&
\qquad \qquad + 
 \int_{0}^{T} \frac{1}{2} ~\tilde{\bE} \Big[  \trace \big\{\partial_v\partial_\mu u_r(\mu_s, \tilde{Y}_s) 
~\tilde{\sigma_s}(\tilde{\sigma}_s)^{\intercal} \big\} \Big]\dd s.
\end{align*}
\end{remark}

\begin{remark}
\label{rem:SlightDifference2CDLL2019}
We highlight the requirement of the square integrability on $\partial_\mu u$ and $\partial_v\partial_\mu u$ in \eqref{cond:MeasureOnly-1BM-u} which is not present in \cite{cardaliaguet2019master}*{Appendix A}. The requirement is necessary for the intermediary step of $W_2$-convergence of the empirical measure appearing in those terms.
\end{remark}

\begin{remark}
Here we write $\trace$ within last term assuming the symmetry of respective matrix holding $\bP$-a.s. for any $t \in [0,T]$. One can see that within the approximating procedure, i.e.~the distance between the Hessian of the mollified empirical projection and the $\partial_v\partial_\mu u$-term is controlled through the decreasing sequence $\varepsilon_N \searrow 0$, thus the symmetry follows by approximation. See \cite{CarmonaDelarue2017book1}*{Remark 5.98} for details.
\end{remark}
\begin{proof}[Proof of Theorem \ref{theo:IW-FullFlow-measureOnly}]
For this proof we follow as guideline the proof of Theorem 5.99 in \cite{CarmonaDelarue2017book1}. Let throughout $t\in[0,T]$.
Recall that $\bE^{1,\dots,N}$ denotes an expectation with respect to the product of sample twin spaces $(\Omega^{1}, \bF^{1}, \bP^{1})\times \dots \times (\Omega^{N}, \bF^{N}, \bP^{N})$. We again underline that we act on an atomless Polish space. 

\emph{Step 1: Mollification \& compactification.} If the desired expansion holds true for any $u$ - \textit{RF-Partially} $\cC^2$, bounded and uniformly continuous (in space and measure arguments), then the formula holds for $u$ satisfying the conditions of the theorem. This fact is straightforward by applying a two-step mollification procedure in the vein of \cite{CarmonaDelarue2017book1}*{Theorem 5.99} and which we introduce next.

Defining for any $ t \in [0,T]$ the $(u \star \rho)_t(\mu) := u_t(\mu \circ \rho^{-1})$ with $\rho: \bR^d \to \bR^d$ smooth function with compact support, the $\bP$-a.s. boundedness of $(u \star \rho)_t(\mu), \partial_\mu(u \star \rho)_t(\mu,v)$ and $\partial_v\partial_\mu(u \star \rho)_t(\mu,v)$ follows from $\bP$-a.s. local boundedness of $u_t, \partial_\mu u_t (\mu,v)$ and $\partial_v\partial_\mu u_t (\mu,v)$ respectively. We also notice that $\partial_\mu (u \star \rho)_t (\mu,v)$ and $\partial_v\partial_\mu (u \star \rho)_t (\mu,v)$ are $\bP$-a.s. joint-continuous in every triple $t \in [0,T], \mu \in \cP^2(\bR^d), v \in \Supp(\mu)$. In order to obtain continuity over the whole space we smooth out the distribution by convolution with a Gaussian density, i.e.~considering $\mu \mapsto (u \star \rho)(\mu * \phi_G)$ instead of $\mu \mapsto (u \star \rho)(\mu)$ with $\phi_G$ - density of standard $d$-dimensional Gaussian distribution $N(0, I_d)$ on $\bR^d$ and $(\mu * \phi_G)(x):=\int_{\bR^d} \phi_G(x-y) d\mu(y)$. Now the support of $\mu *\phi_G$ is the whole $\bR^d$ and $\partial_\mu(u \star \rho)$ and $\partial_v\partial_\mu(u \star \rho)$ are $\bP$-a.s. continuous at every triple $(t,\mu * \phi_G, v), t \in [0,T], v \in \bR^d $.

Now we introduce $\phi_{\varepsilon,G}$ - Gaussian densities $N(0,\varepsilon I_d)$. Letting  $\varepsilon \searrow 0$ one can see convergence of $\phi_{\varepsilon,G}$ to Dirac measure at $0$ for the $W_2$ distance and thus convergence of $\partial_\mu (u \star \rho)_t(\mu * \phi_{\varepsilon,G},v)$ and $\partial_v \partial_\mu (u \star \rho)_t(\mu * \phi_{\varepsilon,G},v)$ to $\partial_\mu (u \star \rho)_t(\mu,v)$ and $\partial_v \partial_\mu (u \star \rho)_t(\mu,v)$ respectively for any $t \in [0,T], v \in \Supp(\mu)$. Now picking $\rho_n$ in a way that $(\rho_n, \partial_z\rho_n, \partial_{zz}^2\rho_n)(z) \to (z
, I_d,0)$ as $n\to \infty$, we can conclude that $\partial_\mu (u \star \rho_n)_t(\mu,v)$ and $\partial_v \partial_\mu (u \star \rho_n)_t(\mu,v)$ converge to $\partial_\mu u_t (\mu,v)$ and $\partial_v \partial_\mu u_t (\mu,v)$ $\bP$-a.s.. One should notice that all the conditions in the theorem hold true while doing mollification. Thus we can assume that $u$ and its first and partial second order derivatives are $\bP$-a.s. {uniformly} bounded and uniformly continuous, and $Y$ is a bounded process.

Now we are to show the well-posedness of the mollification scheme, i.e. that chain rule applied to $u_n:= u \star \rho_n$ converges to the one for $u$.
It is straightforward to verify that $u_n$ satisfies $\bP$-a.s.~\eqref{cond:MeasureOnly-1BM-u} uniformly in $n \geq 1$. 
We apply the dominated convergence theorem twice to conclude the $\bP$-a.s.~convergence for all the terms but the stochastic integral. To handle the latter one additionally requires an argument across the quadratic variation as written in Theorem \ref{theo:ExtendItoWentzell} and localisation.
 
\emph{Step 2. Wellposedness and approximation.}  For a smooth compactly supported density $\rho$ on $\bR^d$ we define, for $n\in \bN$, the mollified version $u^{N,n}$ of $u^N$ (introduced in Definition \ref{def:Auxiliary-uN-for-EmpirialTrick}) for any $t \in [0,T]$, any $y^1,\cdots,y^N \in \bR^d$ by 
\begin{align*}
u^{N,n}_t(y^1, \dots,y^N) :&= n^{Nd} \int_{(\bR^d)^N} u_t^N(y^1-z^1,\dots,y^N-z^N) \prod\limits_{l=1}^{N} \rho(nz^l)\prod\limits_{l=1}^{N}\dd z^l,
\end{align*}
where $\rho$ is a smooth and compactly supported density. We define $\phi^{N,n}, \psi^{N,n}$, in the same way as $u^{N,n}$. One can notice that  $u_t^{N,n},\phi^{N,n}_t, \psi^{N,n}_t$ are maps in $\cC^2\big((\bR^d)^N\big)$ and thus all derivatives up to second order exist and are regular. Furthermore, to $u^{N,n}$ one can apply the standard It\^o-Wentzell formulae, since it satisfies all the conditions of Theorem \ref{theo-Classic-Ito-Wentzell} (verified below).

Now we describe the approximation procedure. From the properties of the Wasserstein metric for finitely supported measures with uniformly bounded second moments, we have $$W_2\Big(\frac{1}{N}\sum\limits_{i=1}^N \delta_{y^i},\frac{1}{N}\sum\limits_{i=1}^N \delta_{y^i-Z^i/n}\Big)^2 \leq \frac{C}{n^2},$$ where $C$ depends on the support of $\rho$.

We generate the processes $\big((Y_t^l)_{t \in [0,T]}\big)_{l = 1,\dots,N}$ - the independent twin processes of $(Y_t)_{t \in [0,T]}$. We underline that processes $\big(Y^l_t,b^l_t,\sigma^l_t \big)_{t \in [0,T]},~ l = 1, \dots N$ are i.i.d. $\bP$-a.s. and the random variables $Y_0^l$ are i.i.d $\bP$-a.s. as well.

The technique is as follows: we mollify the empirical projection $u_t^N$ and obtain $u_t^{N,n}$, this way we can take second-order derivatives and afterwards apply the ``propagation of chaos'' argument to approximate $u_t$ by $u_t^N$, namely for any $ t \in [0,T]$ one have $\bP$-a.s.
\begin{align*}
&\sup_{t \in [0,T]}\bE^{1,\dots,N} \Big[|u_t^{N,n}(Y_t^1,\dots,Y_t^N) - u_t(\mu)|\Big] 
\\
&\leq \sup_{t \in [0,T]}\bE^{1,\dots,N}\Big[|u_t^{N,n}(Y_t^1,\dots,Y_t^N) - u_t^{N}(Y_t^1,\dots, Y_t^N)|\Big] 
 + \sup_{t \in [0,T]}\bE^{1,\dots,N}\Big[|u_t(\bar{\mu}^N_t) - u_t(\mu_t)|\Big]\\
&
\leq
\varepsilon_n + \varepsilon_N,
\end{align*}
where $(\varepsilon_k)_{k \geq 1}$ is a sequence of random variables $\bP$-a.s. converging to $0$, as $k \to \infty$ uniformly in time, this is seen via a propagation of chaos argument, continuity of $u$, dominated convergence theorem and the fact that convergence in Wasserstein metric only depends on the moments of the distribution.

By the $\bP$-a.s.~boundedness of $u$ one can get for any $p \geq 1$ 
\begin{align}
\label{eq:EmpirProj-FuncApprox}
\sup_{t \in [0,T]}\bE^{1,\dots,N}\Big[|u_t^{N,n}(Y_t^1,\dots,Y_t^N) - u_t(\mu) |^p\Big]^{\frac{1}{p}} \leq \varepsilon^{(p)}_n+\varepsilon^{(p)}_N,
\end{align}
where $(\varepsilon^{(p)}_k)_{k \in \bN}$ is a sequence converging $\bP$-a.s. to $0$. 

Now we use the Proposition \ref{prop:DerivativeRelations-Space-2-Lions} to get for any $t \in [0,T],~\bP$-a.s.
\begin{align*}
\partial_{y^i}u^{N,n}_t(y^1,\dots,y^N) &= n^{Nd} \int_{(\bR^d)^N} \partial_{y^i}u^N_t(y^1-z^1,\dots,y^N-z^N) \prod\limits_{l=1}^{N} \rho(nz^l)\prod\limits_{l=1}^{N}\dd z^l \\
&
=
\frac{n^{Nd}}{N} \int_{(\bR^d)^N} \partial_\mu u_t \Big(\frac{1}{N}\sum_{l = 0}^N \delta_{y^l-z^l},y^i-z^i\Big) \prod\limits_{l=1}^{N} \rho(nz^l)\prod\limits_{l=1}^{N}\dd z^l.
\end{align*}
Applying the same argument as above we get
$\bP$-a.s., $p \geq 1$ 
\begin{align}
\label{eq:EmpirProj-FirstDerivApprox}
\sup_{t \in [0,T]}\bE^{1,\dots,N}\Big[|N \partial_{y^i} u^{N,n}_t(Y_t^1,\dots,Y_t^N) - \partial_\mu u_t(\mu_t,Y^i_t)|^p\Big]^\frac{1}{p} \leq \varepsilon^{(p)}_n + \varepsilon^{(p)}_N.
\end{align}
Now we differentiate once again with respect to $y^i$ 
\begin{align*}
\partial_{y^i}\partial_{y^i}u^{N,n}_t(y^1,\dots,y^N) &= \frac{n^{Nd+1}}{N} \int_{(\bR^d)^N} \partial_\mu u^N_t \Big(\frac{1}{N}\sum_{l = 1}^N \delta_{y^l-z^l},y^i-z^i\Big) \otimes \partial_{z^i} \rho(nz^i)\prod\limits_{l \ne i}^{N} \rho(nz^l)\prod\limits_{l=1}^{N}\dd z^l,
\end{align*}
with standard tensor product operating on elements of $\bR^d$.

To the previous identity we add and subtract a perturbation term focusing on the contribution by $\delta_{y^i}$
\begin{align*}
N &\partial^2_{y^iy^i} u^{N,n}_t(y^1,\dots,y^N) \\
&
=
n^{Nd+1} \int_{(\bR^d)^N} \partial_\mu u_t \Big(\frac{1}{N}\sum_{l \ne i}^N \delta_{y^l-z^l} + \frac1N \delta_{y^i},y^i-z^i\Big) \otimes \partial_{z^i} \rho(nz^i)\prod\limits_{l \ne i}^{N} \rho(nz^l)\prod\limits_{l=1}^{N}\dd z^l \\
&
+
n^{Nd+1} \int_{(\bR^d)^N} \bigg[\partial_\mu u_t \Big(\frac{1}{N}\sum_{l = 1}^N \delta_{y^l-z^l},y^i-z^i\Big) - \partial_\mu u_t \Big(\frac{1}{N}\sum_{l \ne i}^N \delta_{y^l-z^l} +  \frac1N \delta_{y^i},y^i-z^i\Big)\bigg] \\
&
\otimes 
\partial_{z^i} \rho(nz^i)\prod\limits_{l \ne i}^{N} \rho(nz^l)\prod\limits_{l=1}^{N}\dd z^l \\
&
=
T^{1,N}_{n,i}(y^1,\dots,y^N) + T^{2,N}_{n,i}(y^1,\dots,y^N).
\end{align*}
We integrate by parts $T^{1,N}_{n,i}$ with respect to the space variable $y$ (that appears from the derivative in measure and notice the two minus signs), use the compact support of $\rho$ for the boundary term, and to the resulting integral term we add and subtract a $\partial_v\partial_\mu u^N_t$ over the whole empirical measure, this yields
\begin{align*}
T^{1,N}_{n,i}(y^1,\dots,y^N) 
&
= n^{Nd} \int_{(\bR^d)^N} \partial_v\partial_\mu u_t \Big(\frac{1}{N}\sum_{l = 1}^N \delta_{y^l-z^l},y^i-z^i\Big) \prod\limits_{l=1}^{N} \rho(nz^l)\prod\limits_{l=1}^{N}\dd z^l\\
&
\qquad
+
n^{Nd} \int_{(\bR^d)^N} \bigg[ \partial_v\partial_\mu u_t \Big(\frac{1}{N}\sum_{l \ne i}^N \delta_{y^l-z^l} + \frac{1}{N}\delta_{y^i},y^i-z^i\Big)\\
&
\qquad
-
\partial_v\partial_\mu u_t \Big(\frac{1}{N}\sum_{l = 1}^N \delta_{y^l-z^l},y^i-z^i\Big)
\bigg] \prod\limits_{l = 1}^{N} \rho(nz^l)\prod\limits_{l=1}^{N}\dd z^l\\
&
=
T^{11,N}_{n,i}(y^1,\dots,y^N) + T^{12,N}_{n,i}(y^1,\dots,y^N).
\end{align*}
For $T^{11,N}_{n,i}$ we have, as previously due to uniform continuity of $\partial_v\partial_\mu u_t$,  $\bP$-a.s.,
$p \geq 1$ 
\begin{align}
\label{eq:EmpirProj-T11Approx}
\sup_{t \in [0,T]}\bE^{1,\dots,N}\Big[|T^{11,N}_{n,i}(Y_t^1,\dots,Y_t^N)  - \partial_v \partial_\mu u_t(\mu_t,Y^i_t)|^p\Big]^\frac{1}{p} \leq \varepsilon^{(p)}_n + \varepsilon^{(p)}_N.
\end{align}
Uniform continuity of $\partial_v \partial_\mu u$ (in space-measure variables) together with the properties of the Wasserstein metric over finitely supported measures gives
$$W_2 \Big(\frac{1}{N}\sum\limits_{l\ne i}^N \delta_{y^l-z^l} + \frac{1}{N}\delta_{y^i},\frac{1}{N}\sum\limits_{l=1}^N \delta_{y^l-z^l}\Big)^2 \leq \frac{1}{N}C,$$
which in turn implies $\bP$-a.s., $p \geq 1$
\begin{align}
\label{eq:EmpirProj-T12Approx}  
\sup_{t \in [0,T]}\bE^{1,\dots,N}\big[|T^{12,N}_{n,i}(Y_t^1,\dots,Y_t^N)|^p\big]^{\frac{1}{p}} \leq \varepsilon^{(p)}_N.
\end{align}
The procedure to deal with $T^{12,N}_{n,i}$ also applies to $T^{2,N}_{n,i}$ and yields $\bP$-a.s.~for any $t \in [0,T]$
\begin{align}
\label{eq:EmpirProj-T2Approx}    
\sup_{t \in [0,T]}\bE^{1,\dots,N}\big[|T^{2,N}_{n,i}(Y_t^1,\dots,Y_t^N)|^p\big]^{\frac{1}{p}} \leq n\varepsilon^{(p)}_N,
\end{align}
with an additional multiplicative factor $n$ appearing after differentiating the regularisation kernel.

We say that $\phi_t(\cdot), \psi_t(\cdot) = 0$ for all other $t$, where $\phi,\psi$ are not defined.
Now the same technique is valid to $\phi^{N,n}, \psi^{N,n}$ to get $\bP$-a.s for almost all $t$
\begin{align*}
    \bE^{1,\dots,N}\Big[\phi_t^{N,n}(Y_t^1, \dots, Y_t^N)\Big] &\to \phi_t(\mu_t), \quad \text{as }N,n \to \infty,\\
    \bE^{1,\dots,N}\Big[\psi_t^{N,n}(Y_t^1, \dots, Y_t^N)\Big] &\to \psi_t(\mu_t), \quad \text{as }N,n \to \infty.
\end{align*}

Hence, 
$\bP$-a.s., $p \geq 1$ 
\begin{align}
\label{eq:EmpirProj-PhiApprox}
&\sup_{0\leq t \leq T}\bE^{1,\dots, N}\Big[ |\int_0^t\phi_s^{N,n}(Y^1_s,\dots,Y^N_s)\dd s - \int_0^t\phi_s(\mu_s)\dd s|^p\Big]^{\frac{1}{p}} \leq \varepsilon^{(p)}_n+\varepsilon^{(p)}_N,\\
\label{eq:EmpirProj-PsiApprox}
&\sup_{0\leq t \leq T}\bE^{1,\dots, N}\Big[ |\int_0^t\psi_s^{N,n}(Y^1_s,\dots,Y^N_s)\cdot \dd W_s - \int_0^t\psi_s(\mu_s)\cdot \dd W_s|^p\Big]^{\frac{1}{p}}  \leq \varepsilon^{(p)}_n+\varepsilon^{(p)}_N.
\end{align}
Without loss of generality we pick the $(\varepsilon_k)_{k \in \bN}$ the same as for $u$.
One can notice that $\psi^{N,n}$, $\phi^{N,n}$ satisfy condition \eqref{eq:RedudeIntegrabilityClassicItoWentzell} of Theorem \ref{theo:ExtendItoWentzell}, due to mollification and the identification from Proposition \ref{prop:DerivativeRelations-Space-2-Lions}.

\emph{Step 3: Applying the classical It\^o-Wentzell to the approximation.} Under our assumptions and the mollification argument in combination with Proposition \ref{prop:DerivativeRelations-Space-2-Lions}, we have sufficient regularity that we can apply the standard It\^o-Wentzell formula (see Theorem \ref{theo-Classic-Ito-Wentzell} and Theorem \ref{theo:ExtendItoWentzell}) to $u^{N,n}$ and obtain
\begin{align}
\nonumber
0 =~ &u^{N,n}_t(Y^1_t,\dots,Y^N_t) - u^{N,n}_t(Y^1_0,\dots,Y^N_0)\\
&
\nonumber
-
\int_0^t \phi_s^{N,n} (Y_s^1,\dots,Y^N_s)\dd s - \int_0^t \psi^{N,n}_s (Y_s^1,\dots,Y^N_s) \cdot \dd W_s\\
&
\label{eq:ItoWentzellParticleSystem-FullFlow}
-
\frac{1}{N} \sum_{l=1}^N \int_0^t \partial_{y^l} u^{N,n}_s(Y_s^1,\dots,Y^N_s) \cdot b^l_s \dd s\\
&
\nonumber
-
\frac{1}{N} \sum_{l=1}^N \int_0^t \partial_{y^l} u^{N,n}_s(Y_s^1,\dots,Y^N_s) \cdot \sigma^l_s \dd W^l_s\\
&
\nonumber
-
\frac{1}{2N} \sum_{l=1}^N \int_0^t \trace \big\{\partial^2_{y^ly^l} u^{N,n}_s(Y_s^1,\dots,Y^N_s) ~ \sigma^l_s(\sigma_s^l)^{\intercal}\big\} \dd s.
\end{align}
Note two important simplifications. Firstly, one would expect the second-derivative term to contain a Hessian, but for independent processes $Y^{l_1}_t,Y^{l_2}_t, l_1 \neq l_2$, we have  $\dd\langle  Y^{l_1},  Y^{l_2} \rangle_t = \1_{\{l_1=l_2\}}\sigma_t^{l_1}(\sigma^{l_2}_t)^\intercal \dd t$ and hence only diagonal terms appear. Secondly, no cross-variation term $d\langle \partial_\mu u^{N,n}, Y^l \rangle_t$ appears, this is due to the independence of the field's noise $W_t$ and noise of the particles $\{W^l_t\}_{l=1,\dots,N}$ within empirical approximation (this will not be the case in the next section).

Now we can proceed with the expected result. Define $\Delta^{N,n}$ as the difference between the RHS of \eqref{eq:ItoWentzellParticleSystem-FullFlow} and the RHS of the below equation, we then have for any $t\in[0,T]$ $\bP$-a.s. (the tautology)  
\begin{align*}
\Delta_t^{N,n} &= u_t(\mu_t) - u_0(\mu_0)
- \int_0^t\phi_s(\mu_s) \dd s - \int_0^t\psi_s(\mu_s) \cdot \dd W_s
-
\frac{1}{N}\sum_{l = 1}^N\int_0^t \partial_\mu u_s(\mu_s,Y^l_s) \cdot b^l_s \dd s \\
&
-\frac{1}{N} \sum_{l=1}^N \int_0^t \partial_{\mu} u_s(\mu_s,Y^l_s)  \cdot\sigma^l_s \dd W^l_s
- 
\frac{1}{2N} \sum_{l = 1}^N \int_{0}^{t} \trace \big\{\partial_v\partial_\mu u_s(\mu_s, Y^l_s) ~\sigma^l_s(\sigma^l_s)^{\intercal} \big\} \dd s.
\end{align*}
It is clear that $[0,T]\ni t\mapsto \Delta_t^{N,n}$ is continuous. Moreover, collecting the inequalities  \eqref{eq:EmpirProj-FuncApprox}-\eqref{eq:EmpirProj-PsiApprox}  we have $\sup_{0\leq t \leq T}|\bE^{1,\dots, N}\big[\Delta_t^{N,n} \big]|\leq \varepsilon_n + (1+n)\varepsilon_N$,  $\bP$-a.s.

We let $N \to \infty$ to get by Fatou's lemma, the law of large numbers and the joint-continuity of all derivatives with localisation argument for stochastic integral term, $\bP$-a.s. that $\sup_{0\leq t \leq T} |\Delta^n_t| \leq 3\varepsilon_n$, where $\bP$-a.s.
\begin{align}
\nonumber
\Delta^n_t &= u_t(\mu_t) - u_0(\mu_0) -\int_0^t\phi_s(\mu_s) \dd s - \int_0^t\psi_s(\mu_s) \cdot \dd W_s
\\
\label{eq:aux-for-measurability-discussion}
&
-
\int_0^t \tilde{\bE} \Big[ \partial_\mu u_s(\mu_s,\tilde{Y}_s) \cdot \tilde{b}_s \Big] \dd s 
-
\frac{1}{2}\: \int_{0}^{t}\tilde{\bE}  \Big[  \trace \big\{\partial_v\partial_\mu u_s(\mu_s,\tilde{Y}_s)  ~
\tilde{\sigma}_s(\tilde{\sigma}_s)^{\intercal}\big\} \Big] \dd s,
\end{align}
where $\Delta^n_t := \lim_{n \to \infty} \Delta^{N,n}_t$, and we applied Fubini's theorem to interchange the Lebesgue integral with the expectation.  
Note that to handle the stochastic integral we apply the localisation technique and use dominated convergence theorem once more.
Letting $n \to \infty$ in the equation above, we conclude that $\Delta^n_t \to \Delta \equiv 0,~\bP$-a.s., which finishes this part of the proof. The measurability of the involved coefficients follows the guidelines set in Remark \ref{rem:measurability-main-remark}.
\end{proof}

\subsection{The joint chain rule}
 
Now we are ready to provide a joint chain rule formula expanding the nature of the random field to support a space variable dependence, i.e. the case $t\mapsto u_t(X_t,\mu_t)$ for $\mu$ the law of \eqref{eq:GenericYprocess-FullFlow22} and $X$ solution to \eqref{eq:ProcessX-fullFlowMeasures}. Let us start by inheriting the structure and properties of the setup of Theorem \ref{theo:IW-FullFlow-measureOnly}. 

\begin{definition}
\label{def:PartialC2-RandomField-FullFlow-ExtendedChainRule}
We say the random field $u:\Omega\times [0,T] \times \bR^d \times \cP_2(\bR^d)\to \bR$ given in \eqref{eq:RandomField-1BM} is \emph{RF-Joint-Partially}-$\cC^{2}$ if 
\begin{enumerate}[i)]
    \item For any $(x,\mu) \in \bR^d \times\cP_2(\bR^d)$,  $(u_t(x,\mu))_{t\in[0,T]}$ is a continuous adapted process taking values in $\bR$ and $(\phi_t(x,\mu))_{t \in [0,T]},(\psi_t(x,\mu))_{t \in [0,T]}$ are $\bF$-progressively measurable processes with values in $\bR$ and $\bR^{d}$ respectively;
    
    \item For almost any $t \in[0,T]$, the maps $(x,\mu)\mapsto \phi_t(x,\mu),~(x,\mu)\mapsto \psi_t(x,\mu)$ are $\bP$-a.s.~jointly-continuous in the product topology of $\bR^d \times \cP_2(\bR^d)$ at every pair $(x,\mu) \in \bR^d \times \cP_2(\bR^d)$;
    
    \item For any $(t,\mu) \in [0,T] \times\cP_2(\bR^d)$, the map $x \mapsto u_t(x,\mu)$ is $\cC^2(\bR^d),~\bP$-a.s. at every $x \in \bR^d$, with $\partial_x u, \partial^2_{xx} u$ being $\bP$-a.s.~joint-continuous at every triple $(t,x,\mu) \in [0,T] \times \bR^d \times \cP_2(\bR^d), ~\bP$-a.s.;
    
    \item For almost any $t \in [0,T],$ for any $\mu \in \cP_2(\bR^d)$, the map $x\mapsto \psi_t(x,\mu)$ is $\cC^1(\bR^d),~\bP$-a.s. at every $x \in \bR^d$, with $\partial_x \psi$ being $ \bP$-a.s.~joint-continuous at every pair $(x,\mu) \in \bR^d \times \cP_2(\bR^d),~\bP$-a.s.;
    
    \item For any $(t,x) \in [0,T] \times \bR^d$, the map $\mu \mapsto u_t(x,\mu)$ is $\bP$-a.s.~continuous in the Wasserstein metric and $L$-differentiable $\bP$-a.s. at every $\mu \in \cP_2(\bR^d)$. Moreover, $\partial_\mu u_t(x,\mu,v)$ has a $\mu$-version such that $\partial_\mu u_t(x,\mu,v)$ is $\bP$-a.s.~joint-continuous at every quadruple $(t,x,\mu,v)$, with $(t,x,\mu) \in [0,T] \times\bR^d \times \cP_2(\bR^d),~v \in \Supp(\mu), ~\bP$-a.s.;

    \item For any $(t,x,\mu) \in [0,T] \times \bR^d \times \cP_2(\bR^d)$, the map $v \mapsto\partial_\mu u_t(x,\mu,v)$ is $\bR^d$-differentiable $\bP$-a.s., at every $v \in \Supp(\mu)$. Moreover, $\partial_v\partial_\mu u_t(x,\mu,v)$ has a $\mu$-version such that $\partial_v\partial_\mu u_t(x,\mu,v)$ is $\bP$-a.s.~joint-continuous at every quadruple $(t,x,\mu,v)$, with $(t,x,\mu) \in [0,T] \times\bR^d \times \cP_2(\bR^d),~v \in \Supp(\mu), ~\bP$-a.s..
\end{enumerate}
\end{definition}

\begin{theorem}
\label{theo:IW-FUllmeasureFlow-Proc+Measure-1BM}
Let $u : \Omega \times [0,T] \times\bR^d\times \cP_2(\bR^d)\to \bR$ defined by \eqref{eq:RandomField-1BM} to be \emph{RF-Joint-Partially-$\cC^2$}. Assume that for any compact $K\subset \bR^d\times \cP_2(\bR^d)$ and $t \in [0,T]$ we have

\begin{align}
	\label{cond:Full-Flow-IntRF-FullChainRulePhi}
	\int_0^t \sup_{(x,\mu) \in K}\Big\{|&\phi_s(x,\mu)| 
	+ |\psi_s(x,\mu)|^2 
	+ |\partial_x \psi_s(x,\mu)|^2\Big\}\dd s  < \infty, \quad \bP\text{-a.s.},
\end{align}
and 
\begin{align}
	\label{cond:Full-Flow-IntRF-FullChainRulePsi}
	 \sup_{(t,x,\mu) \in [0,T] \times K}\Big\{\int_{\bR^d}\Big[|\partial_\mu u_t(x,\mu, v)|^2 + |\partial_v\partial_\mu u_t(x,\mu, v)|^2\Big]\mu(\dd v)\Big\}  < \infty, \quad \bP\text{-a.s.}.
\end{align}
Let $(\mu_t)_{t \in [0,T]}$ be the law of the solution to \eqref{eq:GenericYprocess-FullFlow22} satisfying Assumption \ref{Assump:SDE-Y-mu-1BM}. Let $(X_t)_{t \in [0,T]}$ be the solution process to  \eqref{eq:ProcessX-fullFlowMeasures} under Assumption \ref{Assump:SDE-X-1BM}. 

Then the process $(u_t(X_t,\mu_t))_{t\in[0,T]}$ is an It\^o process $\bP$-a.s.~satisfying the dynamics 
\begin{align}
\label{eq:Full-Flow-Full-Chain-Rule}
\nonumber
u_T(X_T&,\mu_T) - u_0(X_0,\mu_0)\\
&
=
\nonumber
\int_0^T\phi_s(X_s,\mu_s)\dd s + \int_0^T\psi_s(X_s,\mu_s) \cdot \dd W_s \\
&
+ \int_0^T \partial_x u_s(X_s,\mu_s) \cdot \gamma_s\dd W_s 
+ \int_0^T \trace\big \{\partial_x\psi_s(X_s,\mu_s)  (\gamma_s)^\intercal \big\} \dd s \\
\nonumber
&
+ \int_0^T \partial_xu_s(X_s,\mu_s) \cdot\beta_s\dd s 
+ \int_0^T \frac{1}{2}~ \trace\big\{\partial_{xx}^2 u_s(X_s,\mu_s)  ~\gamma_s (\gamma_s)^\intercal \big\}\dd s \\
&
\nonumber
+ \int_0^T \tilde{\bE} \Big[\partial_\mu u_s(X_s,\mu_s,\tilde{Y}_s) \cdot \tilde{b}_s\Big]\dd s 
+ \int_{0}^{T} \frac{1}{2} ~\tilde{\bE}\Big[  \trace\big \{\partial_v\partial_\mu u_s(X_s,\mu_s, \tilde{Y}_s)  ~\tilde{\sigma}_s(\tilde{\sigma_s})^{\intercal}\big\}\Big] \dd s,
\end{align}
with $u_0(X_0,\mu_0) = f(X_0,\mu_0)$.
\end{theorem}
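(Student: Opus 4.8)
The plan is to mirror the empirical-projection strategy used in the proof of Theorem \ref{theo:IW-FullFlow-measureOnly}, now carrying along the genuine space variable $x$ and the process $(X_t)_{t\in[0,T]}$. First I would invoke the same mollification and compactification reduction as in Step 1 of that proof, convolving $u$ in both its space and measure arguments, so that it suffices to treat the case where $u$ and all derivatives appearing in \eqref{cond:Full-Flow-IntRF-FullChainRulePsi}, together with $\phi,\psi,\partial_x\psi$, are bounded and uniformly continuous, and where (after localization) $X$ and the particle system are bounded. The integrability constraints \eqref{cond:Full-Flow-IntRF-FullChainRulePhi}--\eqref{cond:Full-Flow-IntRF-FullChainRulePsi} are preserved under this smoothing uniformly in the mollification parameter.

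Next I would form the empirical projection $u^N(x,y^1,\dots,y^N):=u_t(x,\bar\mu^N)$ with $\bar\mu^N=\frac1N\sum_{l=1}^N\delta_{y^l}$, together with its mollified version $u^{N,n}$ (and the analogous $\phi^{N,n},\psi^{N,n}$), exactly as in Step 2 there. The crucial point is that $u^{N,n}$ now depends on the spatial variable $x$ in addition to the empirical particles, so Proposition \ref{prop:DerivativeRelations-Space-2-Lions} identifies $\partial_{y^l}u^{N,n}$ and $\partial^2_{y^ly^l}u^{N,n}$ with the Lions derivatives $\frac1N\partial_\mu u$ and $\frac1N\partial_v\partial_\mu u$ (up to the perturbation terms that vanish in the limit), whereas $\partial_x u^{N,n}$ and $\partial^2_{xx}u^{N,n}$ are simply the ordinary space derivatives of $u$. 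I would then generate the i.i.d.\ twin processes $(Y^l_t)_{l=1,\dots,N}$, each driven by an independent copy $W^l$ of the Brownian motion, and apply the classical It\^o--Wentzell formula in the reduced-regularity form of Theorem \ref{theo:ExtendItoWentzell} to $u^{N,n}_t(X_t,Y^1_t,\dots,Y^N_t)$, viewing $(X,Y^1,\dots,Y^N)$ as a single It\^o process in $\bR^{d(N+1)}$ driven by $(W,W^1,\dots,W^N)$ (a routine higher-dimensional version of Theorem \ref{theo-Classic-Ito-Wentzell}).

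The key bookkeeping step is the cross-variation structure of the resulting expansion, which is precisely the new ingredient relative to the measure-only case. Since the field noise and the process $X$ are both driven by $W$, the It\^o--Wentzell cross term $\trace\{\partial_x\psi^{N,n}\gamma^\intercal\}$ survives for the $x$-gradient; but since each particle $Y^l$ is driven by the \emph{independent} copy $W^l$, the analogous terms $d\langle\partial_{y^l}\psi^{N,n},Y^l\rangle$ vanish, as do the mixed second-order terms $d\langle Y^{l_1},Y^{l_2}\rangle$ for $l_1\neq l_2$ and $d\langle X,Y^l\rangle$. This orthogonality of $W$ to the $\{W^l\}$ is the design reason why no cross term between the space and measure components appears in \eqref{eq:Full-Flow-Full-Chain-Rule}, and it makes the second-order contribution split into the $\gamma_s\gamma_s^\intercal$-part from $X$ and the diagonal $\sigma^l_s(\sigma^l_s)^\intercal$-parts from the particles.

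Finally I would pass to the limit $N,n\to\infty$. The space-variable terms $\partial_x u^{N,n}\!\cdot\!\beta$, $\partial_x u^{N,n}\!\cdot\!\gamma\,dW$, $\trace\{\partial^2_{xx}u^{N,n}\gamma\gamma^\intercal\}$ and $\trace\{\partial_x\psi^{N,n}\gamma^\intercal\}$ converge to their claimed limits by the dominated-convergence and stochastic-integral (It\^o-isometry plus localization) arguments already used in the proof of Theorem \ref{theo:ExtendItoWentzell}, exploiting the uniform-on-compacts convergence of the mollified derivatives and the boundedness of $X$. The measure-derivative terms and the $\phi^{N,n},\psi^{N,n}$ terms are handled verbatim by the propagation-of-chaos estimates \eqref{eq:EmpirProj-FuncApprox}--\eqref{eq:EmpirProj-PsiApprox} from the proof of Theorem \ref{theo:IW-FullFlow-measureOnly}, producing the $\tilde{\bE}[\partial_\mu u\!\cdot\!\tilde b]$ and $\frac12\tilde{\bE}[\trace\{\partial_v\partial_\mu u\,\tilde\sigma\tilde\sigma^\intercal\}]$ limits. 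I expect the main obstacle to be the joint control of the stochastic-integral convergence: one must simultaneously manage the field-and-space integral $\int\partial_x u^{N,n}\!\cdot\!\gamma\,dW$ (via Theorem \ref{theo:ExtendItoWentzell}) and the particle integrals $\frac1N\sum_l\int\partial_{y^l}u^{N,n}\!\cdot\!\sigma^l\,dW^l$ (via the chaos estimates), keeping the $\varepsilon_N,\varepsilon_n$ bounds uniform in $t$ through the same localizing sequence as before.
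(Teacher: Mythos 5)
Your proposal is correct in outline, but it is not the route the paper takes for this theorem. The paper's proof is a time-discretization (interlacing) argument: fix a partition $\Delta_K=\{0=t_0<\cdots<t_K=T\}$ and split
\begin{align*}
u_T(X_T,\mu_T)-u_0(X_0,\mu_0)
&=\sum_{i=0}^{K-1}\big[u_{t_{i+1}}(X_{t_{i+1}},\mu_{t_{i+1}})-u_{t_i}(X_{t_i},\mu_{t_{i+1}})\big]
+\sum_{i=0}^{K-1}\big[u_{t_i}(X_{t_i},\mu_{t_{i+1}})-u_{t_i}(X_{t_i},\mu_{t_i})\big],
\end{align*}
then expand the first sum (field-and-space increment with the measure frozen forward at $\mu_{t_{i+1}}$ --- legitimate because the full measure flow is deterministic, so the frozen field is still adapted) with the reduced-regularity It\^o--Wentzell formula of Theorem \ref{theo:ExtendItoWentzell}, and the second sum (pure measure increment with time and space frozen) $\omega$-by-$\omega$ with the classical It\^o--Lions formula in the form of Remark \ref{rem:IgnoreItoWentzelAndJustApplyLions}; letting $|\Delta_K|\to 0$ gives \eqref{eq:Full-Flow-Full-Chain-Rule}. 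That route reuses Theorem \ref{theo:IW-FullFlow-measureOnly} and Theorem \ref{theo:ExtendItoWentzell} as black boxes and avoids redoing the particle machinery, but it hinges on the determinism of the measure flow (the passage to the limit in \eqref{eq:Convergence-Of-ForwardMeasureFlow}), which is exactly why the paper says this proof does not carry over to the conditional-flow case. Your route --- empirical projection carrying the extra coordinate $x$, block-diagonal covariation structure of $(X,Y^1,\dots,Y^N)$, propagation of chaos --- is instead precisely the strategy the paper deploys for the conditional-flow joint chain rule (Theorem \ref{theo:IW-PartialMeasureFlow-Proc+Measure-2BM}); the present theorem is morally its special case with all common-noise couplings removed, and your observation that $\langle X,Y^l\rangle\equiv 0$ and $\langle W,W^l\rangle\equiv 0$ is what lets the weaker \emph{RF-Joint-Partially}-$\cC^2$ regularity (no $\partial_x\partial_\mu u$, no $\partial^2_\mu u$, no $\partial_\mu\psi$) suffice.

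One point you should make explicit, since as written it is the closest thing to a gap: Theorem \ref{theo:ExtendItoWentzell} applied to the $\bR^{d(N+1)}$-valued process $(X,Y^1,\dots,Y^N)$ requires the field $u^{N,n}$ to be $\cC^2$, and its diffusion coefficient $\psi^{N,n}$ to be $\cC^1$, in \emph{all} spatial variables $(x,y^1,\dots,y^N)$ jointly --- including the cross derivatives $\partial^2_{xy^l}u^{N,n}$ and $\partial_{y^l}\psi^{N,n}$. These cannot be produced from the assumed regularity of $u$ and $\psi$, because neither $\partial_x\partial_\mu u$ nor $\partial_\mu\psi$ is assumed to exist under \emph{RF-Joint-Partially}-$\cC^2$. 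Here your choice to mollify in $x$ as well as in the particle variables is not cosmetic but essential: for the mollified projection those derivatives exist because they can be thrown onto the convolution kernels, and since in the It\^o--Wentzell expansion they are multiplied by the vanishing covariations $d\langle X,Y^l\rangle$ and $d\langle W,W^l\rangle$, they never need to converge to anything as $n,N\to\infty$. Once this is spelled out, the limiting procedure only involves the derivatives $\partial_x u$, $\partial^2_{xx}u$, $\partial_x\psi$, $\partial_\mu u$, $\partial_v\partial_\mu u$ that hypotheses \eqref{cond:Full-Flow-IntRF-FullChainRulePhi}--\eqref{cond:Full-Flow-IntRF-FullChainRulePsi} do control, and your proof closes.
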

Observe that the terms of the first and the last line on the RHS of the formula are the ones from our Theorem \ref{theo:IW-FullFlow-measureOnly}, whereas the middle two arise from the standard It\^o-Wentzell formulae.

\begin{proof}
In view of the proof of Theorem \ref{theo:IW-FullFlow-measureOnly} we assume a compactification/mollification argument in the measure component has been applied. In this way we avoid a repetition of arguments.

We start by fixing a time $T$ and let $\Pi_K=\{0=t_0<t_1<\cdots<t_K=T\}$ be a partition of $[0,T]$ with modulus $|\Pi_K|=\min_{0\leq j\leq K-1}|t_{j+1}-t_j|>0$. Then 
\begin{align*}
& u_T(X_T,\mu_T) - u_0(X_0) 
\\
&= \sum_{i=0}^{K-1} \Big[ u_\tip(X_\tip,\mu_\tip)-u_\ti(X_\ti,\mu_\ti)\Big] \\
&= \sum_{i=0}^{K-1} \Big[ u_\tip(X_\tip,\mu_\tip)-u_\ti(X_\ti,\mu_\tip)\Big] + \sum_{i=0}^{K-1} \Big[ u_\ti(X_\ti,\mu_\tip)-u_\ti(X_\ti,\mu_\ti)\Big] 
= I_1^{(K)}+I_2^{(K)}.
\end{align*}
Now we see that $I_2^{(K)}$ is amenable to Remark \ref{rem:IgnoreItoWentzelAndJustApplyLions} which together with the joint time-space continuity of the measure derivatives, a localisation procedure for $X$, applying twice the dominated convergence theorem in combination with Assumption \ref{Assump:SDE-Y-mu-1BM} yields
\begin{align*}
I_2^{(K)} &= \sum_{i=0}^{K-1}\bigg[\int_\ti^\tip\tilde{\bE}[ \partial_\mu u_\ti(X_\ti,\mu_s,\tilde{Y}_s) \cdot \tilde{b}_s ]\dd s 
+ \frac{1}{2}\:\int_\ti^\tip \tilde{\bE} \Big[\trace\big\{ \partial_v\partial_\mu u_\ti(X_\ti,\mu_s, \tilde{Y}_s)\tilde{\sigma}_s(\tilde{\sigma}_s)^{\intercal} \big\}\Big]\dd s\bigg],\\
&
\to
\int_{0}^{T}\tilde{\bE}[ \partial_\mu u_s(X_s,\mu_s,\tilde{Y}_s) \cdot \tilde{b}_s ]\dd s +
\frac{1}{2} \:\int_{0}^{T} \tilde{\bE} \Big[ \trace\big\{\partial_v\partial_\mu u_s(X_s,\mu_s, \tilde{Y}_s) ~ \tilde{\sigma}_s(\tilde{\sigma}_s)^{\intercal}\big\} \Big]\dd s,
\end{align*}

where we have taken the limit $|\Delta_K|\to 0$.

The measure increment is forward in time for $I_1^{(K)}$, however its flow is deterministic allowing to directly pass to the limit, after applying the Theorem \ref{theo:ExtendItoWentzell}, whose assumptions are satisfied, having
\begin{align*}
I_1^{(K)} &=  \sum_{i=0}^{K-1} \bigg[\int_\ti^\tip \phi_s(X_s,\mu_\tip)\dd s +\int_\ti^\tip \psi_s(X_s,\mu_\tip) \cdot \dd W_s \\
&+ \int_\ti^\tip \partial_x u_s(X_s,\mu_\tip) \cdot \beta_s \dd s + \int_\ti^\tip \partial_x u_s(X_s,\mu_\tip)  \cdot \gamma_s \dd W_s\big]  \\
&+ \frac12\int_\ti^\tip\trace\big\{\partial^2_{xx}u_s(X_s,\mu_\tip) ~\gamma_s(\gamma_s)^\intercal\big\} \dd s + \int_\ti^\tip \trace\big \{\partial_x \psi_s(X_s,\mu_\tip) (\gamma_s)^\intercal \big\} \dd s\bigg].
\end{align*}
Now one can pass to the limit in $I_1^{(K)}$ as $|\Delta_K|\to 0$, by applying joint-continuity of $u$ and its derivatives, alongside Lebesgue dominated convergence theorem, localisation procedure to deal with $X$, and standard quadratic variation argument to handle stochastic integral, so

\begin{align}
\nonumber
I_{1}^{(K)} &\to \int_0^T\phi_s(X_s,\mu_s)\dd s +\int_0^T \psi_s(X_s,\mu_s)\cdot \dd W_s \\
\label{eq:Convergence-Of-ForwardMeasureFlow}
&+ \int_0^T \partial_x u_s(X_s,\mu_s) \cdot \beta_s \dd s + \int_0^T \partial_x u_s(X_s,\mu_s) \cdot \gamma_s \dd W_s\big]  \\
\nonumber
&+ \frac12\int_0^T\trace\{\partial^2_{xx}u_s(X_s,\mu_s) ~\gamma_s(\gamma_s)^\intercal\big\} \dd s + \int_0^T \trace \big\{\partial_x \psi_s(X_s,\mu_s) (\gamma_s)^\intercal\big\} \dd s.
\end{align}
Joining all the limits we see that \eqref{eq:Full-Flow-Full-Chain-Rule} immediately follows.
Measurability is dealt by Remark \ref{rem:measurability-main-remark}.

\end{proof}


%
%
%
%

\section{It\^o-Wentzell-Lions chain rule with a conditional flow of measures}
\label{sec:conditionalflow}

The setting discussed in this section is inspired by the developments in the theory of mean-field games with common noise, \cite{cardaliaguet2019master} and \cite{CarmonaDelarue2017book2}. Since the framework evolves from that in the previous sections we set up our probability spaces and notation anew. 

We consider $(\Omega^0,\cF^0, \bF^0 = (\cF^0_t)_{t \in [0,T]},\bP^0)$ and $(\Omega^1,\cF^1, \bF^1 = (\cF^1_t)_{t \in [0,T]},\bP^1)$ atomless Polish probability spaces to be the respective completions of $(\Omega^0,\bF^0,\bP^0)$ and $(\Omega^1,\bF^1,\bP^1)$ carrying a respective $d$-dimensional Brownian motions $W^0 = (W_t^0)_{t \in [0,T]}$ and $W^1 = (W_t^1)_{t \in [0,T]}$ generating the probability space's filtration, augmented by all $\bP^0$- and $\bP^1$-null sets respectively. We augment $(\Omega^0,\cF^0, \bF^0 = (\cF^0_t)_{t \in [0,T]},\bP^0)$ with a sufficiently rich sub $\sigma$-algebra $\cF^0_0$ independent of $W^0$ and $W^1$. We denote by $(\Omega,\bF, \bP)$ the completion of the product space $(\Omega^0 \times \Omega^1,\bF^0 \otimes \bF^1, \bP^0 \otimes\bP^1)$ equipped with the filtration $\bF$ obtained by augmenting the product filtration $\bF^0 \otimes \bF^1$ in a right-continuous way and by completing it. In the vein of Section \ref{sec:LionsDerivative} let $\bE^0$ and $\bE^1$ taking the expectation on the first and second space respectively.

Let $u : \Omega \times [0,T] \times\bR^d\times \cP_2(\bR^d)\to \bR$ be a random field, satisfying the dynamics 
\begin{align}
\label{eq:RandomField-2BM}
\dd u_t(x,\mu) = \phi_t(x,\mu)\dd t+\psi^0_t(x,\mu)\cdot \dd W^0_t + \psi^1_t(x,\mu)\cdot \dd W^1_t,\quad u_0(x,\mu) = f(x,\mu),
\end{align}
where $f(x,\mu): \bR^d \times \cP_2(\bR^d) \to \bR$ is a deterministic function, $W^0 = (W^0_t)_{t \in [0,T]}$ and $W^1=(W^1_t)_{t \in [0,T]}$ are independent $d$-dimensional $\bF^0$ and $\bF^1$-Brownian motions respectively; $(\phi, \psi^0,\psi^1): \Omega\times [0,T] \times \bR^d \times \cP_2(\bR^d) \to \bR \times \bR^{d} \times \bR^{d}$ are $\bF$-progressively measurable processes. 
 
Take measurable $(b,\sigma^0, \sigma^1): \Omega \times [0,T] \to \bR^d \times \bR^{d \times d}\times \bR^{d \times d}$ and define the following process
\begin{align}
\label{eq:GenericYprocess-PartialFlow22}
\dd Y_t = b_t\dd t + \sigma^0_t \dd W^0_t+ \sigma^1_t \dd W^1_t, \text{ and initial condition } Y_0\in L^2(\Omega,\cF_0,\bP),
\end{align}
and $\mu_t := \Law(Y_t(\omega_0,\cdot))$ for $\bP^0$-almost any $\omega_0$.
Here $\Law(Y_t(\omega_0,\cdot))$ can be understood as RV from $(\Omega^0,\cF^0,\bP^0)$ into $\cP(\bR^d)$ (for further details see discussion in \cite{CarmonaDelarue2017book2}*{Section 4.3}).

Moreover, the involved coefficients satisfy the next conditions
\begin{assumption}
\label{Assump:SDE-Y-mu-2BM}
$(Y_t)_{t \in [0,T]}$ satisfies Assumption \ref{Assump:SDE-Y-mu-1BM} with $\sigma_t := \left( \begin{smallmatrix} \sigma_t^0 & 0 \\ 0 & \sigma_t^1 \end{smallmatrix} \right)$ and $W_t := (W^0_t,W^1_t)^\intercal$.
\end{assumption}

Take $(X_t)_{t \in [0,T]}$ satisfying dynamics
\begin{align}
\label{eq:ProcessX-partialFlowMeasures}
\dd X_t = \beta_t\dd t +\gamma^0_t\dd W^0_t+\gamma^1_t\dd W^1_t, \text{ and initial condition } X_0 \in L^2(\Omega,\cF_0,\bP),
\end{align}
with coefficients satisfying
\begin{assumption}
\label{Assump:SDE-X-2BM}
$(X_t)_{t \in [0,T]}$ satisfies Assumption \ref{Assump:SDE-X-1BM} with $\gamma_t := \left( \begin{smallmatrix} \gamma_t^0 & 0 \\ 0 & \gamma_t^1 \end{smallmatrix} \right)$ and $W_t := (W^0_t,W^1_t)^\intercal.$
\end{assumption}

We name $(W_t^0)_{t \in [0,T]}$ as a common noise affecting the whole setting, whilst $(W^1)_{t \in [0,T]}$ is the idiosyncratic chaos for the random field and all processes within.
For the purposes of the present section we fix the common noise and derive the dynamics of the random field by conditioning on $W^0$. Once again, all measurability issues are discussed at Remark  \ref{rem:measurability-main-remark}.


\subsection{It\^o-Lions chain rule along a conditional flow of measures (classic)}

We recall the It\^o-Lions formula for the flow of marginals \cite{CarmonaDelarue2017book2}*{Theorem 4.17}.

First, we provide the regularity assumption as given in \cite{CarmonaDelarue2017book2}*{Subsection 4.3.4}.

\begin{definition}
A function $u:[0,T] \times \bR^d \times \cP_2(\bR^d)\to \bR$ is $\cC^{1,2,(2)}$ if
\begin{enumerate}[i)]
    \item For any $\mu\in\cP_2(\bR^d)$ the map $[0,T]\times \bR^d \ni (t,x)\mapsto u_t(x,\mu)$ is $\cC^{1,2}$, and the maps $\partial_t u$, $\partial_x u$ and $\partial_{xx}^2 u$ are joint-continuous at every triple $ (t,x,\mu) \in [0,T] \times \bR^d \times \cP_2(\bR^d)$;

	\item For any $(t,x) \in [0,T]\times \bR^d$, the map $\mu \mapsto u_t(x,\mu)$ is continuously L-differentiable at every point $\mu \in \cP_2(\bR^d)$. Moreover, $\partial_\mu u: [0,T] \times \bR^d \times \cP_2(\bR^d) \times \bR^d \to \bR^d$ has a $\mu$-version such that $\partial_\mu u_t(x,\mu,v)$ is joint-continuous and locally bounded at every quadruple $(t,x,\mu,v)$, with $(t,x,\mu) \in [0,T] \times \bR^d \times \cP_2(\bR^d),~ v \in \Supp(\mu)$;
	
	\item For any $(t,x,\mu)\in [0,T] \times \bR^d \times \cP_2(\bR^d)$, the map $v \mapsto \partial_\mu u_t(x,\mu,v)$ is continuously $\bR^d$-differentiable at every point $v \in \bR^d$. Moreover, its derivative $\partial_v \partial_\mu u: [0,T] \times \bR^d \times \cP_2(\bR^d) \times \bR^d \to \bR^{d\times d}$ has a $\mu$-version such that $\partial_v\partial_\mu u_t(x,\mu,v)$ is continuous and locally bounded at every quadruple $(t,x,\mu,v)$, with $(t,x,\mu) \in [0,T] \times \bR^d \times \cP_2(\bR^d),~ v \in \Supp(\mu)$;
	
	\item For any $(t,x,\mu,v)\in [0,T] \times \bR^d \times \cP_2(\bR^d) \times \Supp(\mu)$, the map $\nu \mapsto \partial_\mu u_t(x,\mu,v)$ is continuously L-differentiable at every point $\mu \in \cP_2(\bR^d)$. Moreover, its derivative $\partial^2_\mu u: [0,T] \times \bR^d \times \cP_2(\bR^d) \times \bR^d \times \bR^d \to \bR^{d\times d}$ has a $\mu$-version such that $\partial^2_\mu u_t(x,\mu,v)$ is continuous and locally bounded at every quintuple $(t,x,\mu,v,v^\prime)$, with $(t,x,\mu) \in [0,T] \times \bR^d \times \cP_2(\bR^d),~ v,v^\prime \in \Supp(\mu)$;
	
	\item For any $(t,x,\mu,v)\in [0,T] \times \bR^d \times \cP_2(\bR^d)\times \Supp(\nu)$, the map $x \mapsto \partial_\mu u_t(x,\mu,v)$ is continuously $\bR^d$-differentiable at every point $x \in \bR^d$. Moreover, its derivative $\partial_x \partial_\mu u: [0,T] \times \bR^d \times \cP_2(\bR^d) \times \bR^d \to \bR^{d\times d}$ has a $\mu$-version such that $\partial_x \partial_\mu u_t(x,\mu,v)$ is continuous and locally bounded at every quadruple $(t,x,\mu,v)$, with $(t,x,\mu) \in [0,T] \times \bR^d \times \cP_2(\bR^d),~ v \in \Supp(\mu)$;
\end{enumerate}
\end{definition}

\begin{theorem}
\label{theo-Classic-Ito-Lions-PartialFlow}
Let $u:[0,T] \times \bR^d \times \cP_2(\bR^d) \to \bR$ be $\cC^{1,2,(2)}$. Furthermore for any compact $K \subset \bR^d \times \cP_2(\bR^d)$ we have
\begin{align*}
	\sup_{(t,x,\mu) \in [0,T]\times K}\bigg\{\int_{\bR^d}\Big[|\partial_\mu u_t(x,\mu, v)|^2 &+ |\partial_v\partial_\mu u_t(x,\mu, v)|^2
	+ |\partial_x\partial_\mu u_t(x,\mu, v)|^2\Big]\mu(\dd v) 
	\\
	& + \int_{\bR^d \times \bR^d} \Big[  |\partial^2_{\mu} u_t(x,\mu, v,v^\prime)|^2\Big] \mu(\dd v)\mu(\dd v^\prime)\bigg\}  < \infty, \quad \bP \textit{-a.s.}.
\end{align*}

Take $(\mu_t)_{t \in [0,T]}$ associated to \eqref{eq:GenericYprocess-PartialFlow22} under Assumption \ref{Assump:SDE-Y-mu-2BM}. 
Take $(X_t)_{t \in [0,T]}$ to be a $d$-dimensional It\^o process with dynamics \eqref{eq:ProcessX-partialFlowMeasures} satisfying Assumption \ref{Assump:SDE-X-2BM}. 

Then $(u_t(X_t,\mu_t))_{t\in[0,T]}$ is an It\^o process satisfying $\bP$-a.s.
\begin{align*}
u_T(X_T,\mu_T) ~-~ &u_0(X_0,\mu_0) 
\\
&
= 
\int_0^T \partial_t u_s(X_s,\mu_s) \dd s  
+\int_0^T \partial_xu_s(X_s,\mu_s) \cdot \beta_s \dd s 
+ \int_0^T \partial_xu_s(X_s,\mu_s) \cdot \gamma^0_s \dd W^0_s \\
&
+ \int_0^T \partial_xu_s(X_s,\mu_s) \cdot \gamma^1_s \dd W^1_s 
+ \int_0^T \frac{1}{2} ~\trace\big\{\partial_{xx}^2 u_s(X_s,\mu_s)  (\gamma^0_s (\gamma^0_s)^\intercal +\gamma^1_s (\gamma^1_s)^\intercal ) \big\}\dd s \\
&
+ \int_0^T \tilde{\bE}^1 \Big[\partial_\mu u_s(X_s,\mu_s,\tilde{Y}_s)\cdot \tilde{b}_s\Big]\dd s 
+\int_0^T \tilde{\bE}^1 \Big[(\tilde{\sigma}^0_s)^\intercal\partial_\mu u_s(X_s,\mu_s,\tilde{Y}_s) \Big]\cdot \dd W_s^0 \\
&
+ \int_{0}^{T} \frac{1}{2}~ \tilde{\bE}^1\Big[  \trace\big \{\partial_v\partial_\mu u_s(X_s,\mu_s, \tilde{Y}_s)  (\tilde\sigma^0_s(\tilde{\sigma}_s^0)^{\intercal}+ \tilde{\sigma}^1_s(\tilde{\sigma}_s^1)^{\intercal})\big\}\Big] \dd s \\
&
+
\int_{0}^{T}\tilde{\bE}^1\Big[  \trace\big \{\partial_x\partial_\mu u_s(X_s,\mu_s, \tilde{Y}_s) ~ \gamma^1_s(\tilde\sigma_s^1)^{\intercal}\big\}\Big] \dd s
\\
&
+  \int_0^T \frac12 ~\hat{\bE}^1 \Big[\tilde{\bE}^1 \Big[ \trace\big \{\partial^2_{\mu} u_s(X_s,\mu_s,\tilde{Y}_s,\hat{Y}_s) ~\tilde{\sigma}^0_s(\hat \sigma_s^0)^\intercal\big\} \Big]\Big]\dd s
\end{align*}
where $\tilde{\bE}$ denotes the expectation acting on the model twin spaces $(\tilde{\Omega}, \tilde{\bF}, \tilde{\bP})$ and $(\hat{\Omega}, \hat{\bF}, \hat{\bP})$ and let the processes $(\tilde{Y}_t, \tilde{b}_t, \tilde{\sigma}_t)_{t\in[0,T]}$ and $(\hat{Y}_t, \hat{b}_t, \hat{\sigma}_t)_{t\in[0,T]}$ be the twin processes of $(Y_t,b_t,\sigma_t)_{t\in[0,T]}$ respectively living within.
\end{theorem}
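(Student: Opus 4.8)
The proof follows the empirical-projection strategy used for Theorem \ref{theo:IW-FullFlow-measureOnly}, now carried out with the common-noise structure retained throughout; since $u$ is deterministic, the classical finite-dimensional It\^o formula replaces the It\^o-Wentzell formula. I would first reduce to a regular setting by the same two-step mollification as in Step~1 of the proof of Theorem \ref{theo:IW-FullFlow-measureOnly}: convolve $u$ in $x$ and smooth the measure argument by a Gaussian so that $u$ and all the derivatives appearing in the statement ($\partial_x u,\partial^2_{xx}u,\partial_\mu u,\partial_v\partial_\mu u,\partial_x\partial_\mu u,\partial^2_\mu u$) become bounded and uniformly continuous, and localise $X$ and $Y$ so they may be assumed bounded. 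The integrability hypotheses survive the mollification uniformly, and dominated convergence together with a quadratic-variation control of the stochastic integrals transfers the formula back to the original $u$.

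The heart of the argument is a \emph{conditional} particle approximation. I would introduce processes $(Y^l)_{l=1}^N$ that are conditionally i.i.d.\ copies of $Y$ given the common noise: each $Y^l$ is driven by the same $W^0$ (with coefficient $\sigma^{0,l}$) but by its own idiosyncratic Brownian motion $W^{1,l}$, the $W^{1,l}$ being independent across $l$. Writing $\bar\mu^N_t=\tfrac1N\sum_{l=1}^N\delta_{Y^l_t}$, conditional propagation of chaos gives $W_2(\bar\mu^N_t,\mu_t)\to0$ $\bP^0$-a.s.\ with a rate depending only on moments; this is precisely where the conditional law $\mu_t=\Law(Y_t(\omega_0,\cdot))$ and its measurability as a $\cP_2(\bR^d)$-valued random variable (Remark \ref{rem:measurability-main-remark}) are used. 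Applying the ordinary It\^o formula to $u^N_t(X_t,Y^1_t,\dots,Y^N_t)$ (licit because $u^N\in\cC^{1,2}$ after the mollification) and substituting the derivative identities of Proposition \ref{prop:DerivativeRelations-Space-2-Lions} in the measure argument, I would match each resulting term to a term of the claimed expansion by a law of large numbers taken conditionally on $W^0$.

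The decisive departure from the full-flow proof---and the source of the extra terms---is that the particles now share $W^0$, so the off-diagonal cross-variations $d\langle Y^k,Y^l\rangle_t=\sigma^{0,k}_t(\sigma^{0,l}_t)^\intercal\,dt$ ($k\neq l$) no longer vanish, whereas the idiosyncratic first-order martingale $\tfrac1N\sum_l\partial_{y^l}u^N\cdot\sigma^{1,l}\,dW^{1,l}$ still disappears as $N\to\infty$ by conditional independence. The diagonal part of the Hessian of $u^N$ (through the $\tfrac1N\partial_v\partial_\mu u$ piece of Proposition \ref{prop:DerivativeRelations-Space-2-Lions}) produces the single-representative term carrying the full diffusion $\tilde\sigma^0(\tilde\sigma^0)^\intercal+\tilde\sigma^1(\tilde\sigma^1)^\intercal$, while the off-diagonal part (through the $\tfrac1{N^2}\partial^2_\mu u$ piece) produces the genuinely new $\partial^2_\mu u$ term; the latter requires a \emph{double} law of large numbers over two independent representatives $\tilde Y,\hat Y$, hence the existence of the second measure derivative and the full $\cC^{1,2,(2)}$ regularity (in contrast with the merely partial $\cC^2$ that sufficed in Theorem \ref{theo:IW-FullFlow-measureOnly}, where this double-derivative term vanished in the limit). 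The first-order particle terms yield the measure drift $\tilde\bE^1[\partial_\mu u(\mu_\cdot,\tilde Y)\cdot\tilde b]$ and, along the common noise, the $dW^0$ martingale $\tilde\bE^1[(\tilde\sigma^0)^\intercal\partial_\mu u]\cdot dW^0$; the cross-variation of $X$ with each particle through the shared common noise supplies the $\partial_x\partial_\mu u$ term; and the purely spatial contributions are produced exactly as in the classical It\^o formula.

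The main obstacle I anticipate is the passage to the limit in the two common-noise contributions: the $\partial^2_\mu u$ term, whose convergence rests on the double conditional law of large numbers and on the uniform continuity and boundedness secured in Step~1, and the $dW^0$ stochastic integral, for which I would use a conditional It\^o isometry together with the $\bP^0$-a.s.\ conditional propagation of chaos and the localisation of $X$ and $Y$. Collecting the limits of all terms, letting $N\to\infty$, and finally undoing the mollification yields the stated expansion; measurability of the conditional-law-valued objects is handled as in Remark \ref{rem:measurability-main-remark}, following \cite{CarmonaDelarue2017book2}*{Section~4.3}.
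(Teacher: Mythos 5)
Your overall strategy is sound, and it is in fact the strategy of this paper itself --- but not for this theorem. The paper offers no proof of Theorem \ref{theo-Classic-Ito-Lions-PartialFlow}: it is recalled as a known result from \cite{CarmonaDelarue2017book2}*{Theorem 4.17}. The conditional empirical-projection scheme you describe --- conditionally i.i.d.\ particles $Y^l$ sharing $W^0$ but carrying fresh idiosyncratic noises $W^{1,l}$, the It\^o formula applied to $u^N(X_t,Y^1_t,\dots,Y^N_t)$, the derivative identities of Proposition \ref{prop:DerivativeRelations-Space-2-Lions}, conditional propagation of chaos (the paper invokes \cite{CarmonaDelarue2017book2}*{Theorem 4.14} for the $W_2$-convergence of $\bar\mu^N$), and the off-diagonal $W^0$ cross-variations producing the $\partial^2_\mu u$ term via a double conditional law of large numbers --- is precisely how the paper proves its own generalizations, Theorems \ref{theo:IW-PartialFlow-measureOnly} and \ref{theo:IW-PartialMeasureFlow-Proc+Measure-2BM}, of which the present statement is essentially the deterministic special case ($\phi_t=\partial_t u_t$, $\psi^0=\psi^1=0$). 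One cosmetic difference: the paper removes the idiosyncratic particle martingales exactly, by taking the conditional expectation $\bE^{1,1,\dots,N}[\,\cdot\,]=\bE[\,\cdot\,|\cF^0\otimes\cF^1]$ before passing to the limit, whereas you let them vanish as $N\to\infty$ by conditional independence; both routes work.

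There is, however, one point you must reconcile. Your derivation of the mixed term --- ``the cross-variation of $X$ with each particle through the shared common noise'' --- produces
\[
\int_0^T \tilde{\bE}^1\Big[\trace\big\{\partial_x\partial_\mu u_s(X_s,\mu_s,\tilde Y_s)\,\gamma^0_s(\tilde\sigma^0_s)^\intercal\big\}\Big]\,ds,
\]
whereas the statement you were asked to prove prints $\gamma^1_s(\tilde\sigma^1_s)^\intercal$ in that slot; these are different objects, and your argument proves the former, not the latter. Your version is the correct one: the flow $(\mu_t)_{t\in[0,T]}$ is $\cF^0$-adapted, so its martingale part is driven by $W^0$ alone (the $W^1$-integral is averaged out by $\bE^1$), hence its covariation with $X$ can only charge $\gamma^0$ and $\tilde\sigma^0$; equivalently, in the particle picture $d\langle X,Y^l\rangle_t=\gamma^0_t(\sigma^{0,l}_t)^\intercal\,dt$ because $W^1$ and $W^{1,l}$ are independent. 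This is consistent with the corresponding term of the paper's own Theorem \ref{theo:IW-PartialMeasureFlow-Proc+Measure-2BM} and with \cite{CarmonaDelarue2017book2}*{Theorem 4.17}, so the statement as printed here carries a typo. A complete proof attempt should have detected, and explicitly flagged, this mismatch between what the computation yields and what the claim asserts, rather than passing over it silently.
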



\subsection{It\^o-Wentzell-Lions formula for measure functionals}

For the derivation of the expansion in measure component, and as in Theorem \ref{theo:IW-FullFlow-measureOnly}, we fix $x \in \bR^d$ then omit its dependence, i.e. 
\begin{align*}
u_t(\mu):=u_t(x,\mu), \quad
\phi_t(\mu):=\phi_t(x,\mu), 
\quad \psi^0_t(\mu):=\psi^0_t(x,\mu) \quad\textrm{and}\quad
\psi^1_t(\mu):=\psi^1_t(x,\mu). 
\end{align*}
Now we introduce the regularity for random field given by \eqref{eq:RandomField-2BM} which inherits  Definition \ref{def:PartialC2-RandomField-FullFlow} and requires additionally a second-order Fr\'echet differentiability.
\begin{definition}
\label{rate def:PartialC2-RandomField-PartialFlow}
We say the random field $u:\Omega\times [0,T] \times \cP_2(\bR^d)\to \bR$ given in \eqref{eq:RandomField-2BM} (for some $x\in \bR^d$ fixed) is \emph{RF-Generally}-$\cC^{2}$ if 
\begin{enumerate}[i)]
    \item $u$ is \emph{RF-Partially}-$\cC^2$ for $\psi_t :=  (\psi_t^0 ,\psi_t^1)^\intercal$ and $W_t := (W^0_t,W^1_t)^\intercal$;

 	\item For any $(t,v) \in [0,T]\times \Supp(\mu)$, the map $\mu \mapsto \partial_\mu u_t(\mu,v)$ is L-differentiable $\bP$-a.s. at every point $ \mu \in \cP_2(\bR^d)$. Moreover, $\partial^2_{\mu} u_t(\mu,v,v^\prime)$ has a $\mu$-version such that $\partial^2_\mu u_t(\mu,v,v^\prime)$ is $\bP$-a.s.~joint-continuous at every quadruple $(t,\mu,v,v^\prime)$, with $(t,\mu) \in [0;T] \times \cP_2(\bR^d),~v,v^{\prime} \in \Supp(\mu),~\bP$-a.s.;
    
    \item For almost any $t \in [0,T]$, the map $\mu\mapsto \psi^0_t(\mu)$ is L-differentiable $\bP$-a.s. at every point $\mu \in \cP_2(\bR^d)$. Moreover, $\partial_\mu \psi^0_t(\mu,v)$ has a $\mu$-version such that $\partial_\mu \psi^0_t(\mu,v)$ is $\bP$-a.s.~joint-continuous at every pair $(\mu,v),~\mu \in \cP_2(\bR^d),~v \in \Supp(\mu),~\bP$-a.s.
\end{enumerate}
\end{definition}
We highlight the slight abuse of notation in the way point \emph{i)} in the above Definition \ref{rate def:PartialC2-RandomField-PartialFlow} is formulated. This avoids re-stating a full assumption that is nonetheless clear to understand.

\begin{theorem}
\label{theo:IW-PartialFlow-measureOnly}
Let $u$ be \emph{RF-Generally-$\cC^{2}$} It\^o random field \eqref{eq:RandomField-2BM} (where $x\in \bR^d$ is fixed and omitted throughout, also for $\phi$ and $\psi$). Assume for any compact $K\subset \cP_2(\bR^d)$ we have
\begin{align*}
	\int_0^T \sup_{\mu \in K}\Big\{|\phi_s(\mu)| + |\psi^0_s(\mu)|^2 + |\psi^1_s(\mu)|^2  + \int_{\bR^d}  |\partial_\mu \psi^0_s(\mu,v)|^2 \mu(\dd v)\Big\}\dd s < \infty,\quad \bP \textit{-a.s.},
\end{align*}
    and
\begin{align}
    \label{cond:Partial-Flow-IntRF-FullChainRule-U}
	\sup_{(t,\mu) \in [0,T] \times K}\Big\{\int_{\bR^d}\Big[|\partial_\mu u_t(\mu, v)|^2 + |\partial_v\partial_\mu u_t(\mu, v)|^2\Big]\mu(&\dd v) 
	\\
	\nonumber
	+\int_{\bR^d \times \bR^d} |\partial^2_{\mu} u_t(\mu,v,v^\prime|^2 \mu(&\dd v)\mu(\dd v^\prime) \Big\}  < \infty,\quad \bP \textit{-a.s.}.
\end{align}
For almost all $\omega^0 \in \Omega^0$ take $(\mu_t)_{t \in [0,T]} := \big(Law(Y_t(\omega_0,\cdot))\big)_{t \in [0,T]},$ with $Y$ solution to \eqref{eq:GenericYprocess-PartialFlow22} under Assumption \ref{Assump:SDE-Y-mu-2BM}.

Then $(u_t(\mu_t))_{t\in[0,T]}$ is an It\^o process $\bP$-a.s. satisfying the expansion
\begin{align}
\nonumber
u_T(\mu_T) - u_0(\mu_0) =&\int_0^T\phi_s(\mu_s) \dd s + \int_0^T\psi^0_s(\mu_s) \cdot \dd W^0_s+ \int_0^T\psi^1_s(\mu_s)  \cdot \dd W^1_s\\
\nonumber
&
+
\int_0^T \tilde{\bE}^1 \Big[\partial_\mu u_s(\mu_s,\tilde{Y}_s)\cdot \tilde{b}_s\Big] \dd s +\int_0^T \tilde{\bE}^1 \Big[(\tilde{\sigma}^0_s)^\intercal\partial_\mu u_s(\mu_s,\tilde{Y}_s) \Big] \cdot \dd W^0_s \\
\label{eq:PartialFlow-MeasureOnly-Formula}
&
+ 
\int_{0}^{T} \frac{1}{2}~ \tilde{\bE}^1  \Big[ \trace\big\{\partial_v\partial_\mu u_s(\mu_s, \tilde{Y}_s)  ( \tilde{\sigma}^0_s(\tilde{\sigma}_s^0)^{\intercal} + \tilde{\sigma}^1_s(\tilde{\sigma}_s^1)^{\intercal}) \big\}\Big]\dd s \\
\nonumber
&
+ 
\int_{0}^{T} \frac{1}{2}~ \hat{\bE}^1\Big[\tilde{\bE}^1  \Big[\trace\big\{\partial^2_{\mu} u_s(\mu_s, \tilde{Y}_s,\hat{Y_s})~ \tilde{\sigma}^0_s(\hat{\sigma}_s^0)^{\intercal} \big\}\Big]\Big]\dd s \\
\nonumber
&
+
\int_{0}^{T}\tilde{\bE}^1\Big[\trace\big\{\partial_\mu \psi^0_s(\mu_s,\tilde{Y}_s) (\tilde{\sigma}^0_s)^\intercal\big\}\Big] \dd s,
\end{align}
where the formula above $\tilde{\bE}$ and $\hat{\bE}$ denote the expectation acting on the model twin spaces $(\tilde{\Omega}, \tilde{\bF}, \tilde{\bP})$ and $(\hat{\Omega}, \hat{\bF}, \hat{\bP})$ respectively, and let the processes $(\tilde{Y}_t, \tilde{b}_t, \tilde{\sigma}_t)_{t\in[0,T]}$ and $(\hat{Y}_t, \hat{b}_t, \hat{\sigma}_t)_{t\in[0,T]}$ be the independent twin processes of $(Y_t,b_t,\sigma_t)_{t\in[0,T]}$ respectively living within.
\end{theorem}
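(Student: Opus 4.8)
The proof follows the projection-over-empirical-measures blueprint established for Theorem~\ref{theo:IW-FullFlow-measureOnly}, but the copy construction of Section~\ref{sec:LionsDerivative} must be adapted to the common-noise setting, which is the promised disclosure. Concretely, I would build the particle system $(Y^l)_{l=1,\dots,N}$ on copy spaces that all \emph{share} the common noise $W^0$ while carrying mutually independent idiosyncratic Brownian motions $W^{1,l}$, independent in turn of the field's own idiosyncratic noise $W^1$; thus each particle obeys $dY^l_t = b^l_t dt + \sigma^{0,l}_t dW^0_t + \sigma^{1,l}_t dW^{1,l}_t$ and, conditionally on $\cF^0$, the $Y^l$ are i.i.d.\ with conditional law $\mu_t = \Law(Y_t(\omega_0,\cdot))$. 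As in the first step of Theorem~\ref{theo:IW-FullFlow-measureOnly}, a two-stage mollification (convolution in space followed by Gaussian smoothing of the measure argument) together with localization of $Y$ reduces matters to the case in which $u$ and all derivatives appearing in \eqref{cond:Partial-Flow-IntRF-FullChainRule-U}, now including $\partial^2_\mu u$, are bounded and uniformly continuous; the extra hypotheses on $\psi^0$ and $\partial_\mu\psi^0$ are preserved by the same procedure.

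Next I would form the empirical projection $u^{N,n}$ (and likewise $\phi^{N,n},\psi^{0,N,n},\psi^{1,N,n}$) and apply the reduced-regularity It\^o-Wentzell formula of Theorem~\ref{theo:ExtendItoWentzell} to the $Nd$-dimensional process $(Y^1,\dots,Y^N)$. The resulting expansion contains the three field terms $\phi^{N,n}ds+\psi^{0,N,n}\cdot dW^0+\psi^{1,N,n}\cdot dW^1$, the first-order contributions $\sum_l\partial_{y^l}u^{N,n}\cdot(b^l\,ds+\sigma^{0,l}dW^0+\sigma^{1,l}dW^{1,l})$, the second-order trace term weighted by $d\langle Y^{l_1},Y^{l_2}\rangle_t = [\sigma^{0,l_1}(\sigma^{0,l_2})^\intercal + \1_{l_1=l_2}\sigma^{1,l_1}(\sigma^{1,l_2})^\intercal]dt$, and the It\^o-Wentzell cross-variation $\sum_l\trace\{\partial_{y^l}\psi^{0,N,n}(\sigma^{0,l})^\intercal\}ds$. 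The two structural features to stress are that the off-diagonal entries $l_1\neq l_2$ of the quadratic covariation no longer vanish---they carry the common-noise block $\sigma^{0,l_1}(\sigma^{0,l_2})^\intercal$---and that only $\psi^0$ produces a Wentzell cross-variation, since the field's idiosyncratic noise $W^1$ is independent of every $W^{1,l}$.

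I would then substitute the derivative identities of Proposition~\ref{prop:DerivativeRelations-Space-2-Lions}, namely $\partial_{y^l}u^{N,n}\approx\frac1N\partial_\mu u(\bar\mu^N,Y^l)$, the diagonal Hessian $\partial^2_{y^ly^l}u^{N,n}\approx\frac1N\partial_v\partial_\mu u+\frac1{N^2}\partial^2_\mu u(\cdot,Y^l,Y^l)$, the off-diagonal Hessian $\partial^2_{y^{l_1}y^{l_2}}u^{N,n}\approx\frac1{N^2}\partial^2_\mu u(\bar\mu^N,Y^{l_1},Y^{l_2})$, and $\partial_{y^l}\psi^{0,N,n}\approx\frac1N\partial_\mu\psi^0(\bar\mu^N,Y^l)$, and pass to the limit $N,n\to\infty$ by a conditional (on $\cF^0$) law of large numbers combined with localization and dominated convergence, as in the closing step of Theorem~\ref{theo:IW-FullFlow-measureOnly}. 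Bookkeeping of the scalings shows which contributions survive: the $\frac1N$ first-order drift gives $\tilde{\bE}^1[\partial_\mu u\cdot\tilde{b}]$; the $\frac1N$ common-noise diffusion gives the new stochastic integral $\tilde{\bE}^1[(\tilde\sigma^0)^\intercal\partial_\mu u]\cdot dW^0$; the idiosyncratic diffusion $\frac1N\sum_l\partial_\mu u\cdot\sigma^{1,l}dW^{1,l}$ has quadratic variation of order $1/N$ and vanishes; the diagonal $\frac1N\partial_v\partial_\mu u$ piece gives $\frac12\tilde{\bE}^1[\trace\{\partial_v\partial_\mu u(\tilde\sigma^0(\tilde\sigma^0)^\intercal+\tilde\sigma^1(\tilde\sigma^1)^\intercal)\}]$; the diagonal $\frac1{N^2}\partial^2_\mu u$ correction is $O(1/N)$ and vanishes; the off-diagonal $\frac1{N^2}\sum_{l_1\neq l_2}\partial^2_\mu u\,\sigma^{0,l_1}(\sigma^{0,l_2})^\intercal$ converges to $\frac12\hat{\bE}^1[\tilde{\bE}^1[\trace\{\partial^2_\mu u(\mu,\tilde{Y},\hat{Y})\tilde\sigma^0(\hat\sigma^0)^\intercal\}]]$; and the $\frac1N$ Wentzell term gives $\tilde{\bE}^1[\trace\{\partial_\mu\psi^0(\tilde\sigma^0)^\intercal\}]$, recovering exactly \eqref{eq:PartialFlow-MeasureOnly-Formula}.

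The main obstacle is the off-diagonal second-order term. Unlike the full-flow case, where particle independence killed every $l_1\neq l_2$ contribution, here the shared common noise forces one to control the double sum $\frac1{N^2}\sum_{l_1\neq l_2}\partial^2_\mu u(\bar\mu^N,Y^{l_1},Y^{l_2})\sigma^{0,l_1}(\sigma^{0,l_2})^\intercal$ and to identify its limit with a nested expectation over two independent copies $\tilde{Y},\hat{Y}$; this is a $U$-statistic / conditional-propagation-of-chaos argument that genuinely requires the new regularity $\partial^2_\mu u$ assumed in \emph{RF-Generally}-$\cC^2$ together with the square-integrability bound on $\partial^2_\mu u$ in \eqref{cond:Partial-Flow-IntRF-FullChainRule-U}. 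A secondary difficulty is that the surviving stochastic integrals against $W^0$ are driven by the \emph{same} Brownian motion for every $l$, so their convergence cannot be read off term-by-term; one passes instead through an It\^o-isometry estimate on $\bE[\sup_t|\cdot|^2]$ as in the stochastic-integral step of Theorem~\ref{theo:ExtendItoWentzell}, using the conditional law of large numbers for the integrands together with the uniform bounds secured by mollification.
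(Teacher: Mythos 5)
Your proposal is correct and follows essentially the same route as the paper's proof: the same common-noise copy construction (particles sharing $W^0$ with independent idiosyncratic $W^{1,l}$, conditionally i.i.d.\ given $\cF^0$), the same compactification/mollification of $u$, the empirical projection fed through Theorem~\ref{theo:ExtendItoWentzell}, the conditional propagation-of-chaos limit with localization and quadratic-variation arguments, and identical term-by-term bookkeeping, including the surviving off-diagonal $\partial^2_\mu u$ double sum and the $\partial_\mu\psi^0$ cross-variation. The only deviation is cosmetic: because \emph{RF-Generally}-$\cC^2$ already supplies $\partial^2_\mu u$, the paper applies Proposition~\ref{prop:DerivativeRelations-Space-2-Lions} directly to $u^N$ (which is then genuinely $\cC^2$ on $(\bR^d)^N$) and skips your second mollification layer $u^{N,n}$ entirely, so the identities you write with ``$\approx$'' hold exactly and the associated error control is unnecessary.
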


One can notice two new terms appearing in contrast with the formula in Theorem  \ref{theo:IW-FullFlow-measureOnly}. Whilst the $\partial_\mu^2 u$ term appears as a cross-variation of two model particles $\tilde Y$ and $\hat Y$ experiencing the same noise $W^0$ and is present in Theorem \ref{theo-Classic-Ito-Lions-PartialFlow}, a brand new $\partial_\mu \psi^0$ term now indicates an interaction of the field $u$ with the model particle $\tilde Y$ through the same $W^0$.

In contrast to the proof of Theorem \ref{theo:IW-FullFlow-measureOnly}, the arguments here are far more straightforward. This is due to the fact that we now expect to receive a $\partial_\mu^2 u$ term within the expansion, so we should assume the respective regularity, whilst the same situation in the proof of Theorem \ref{theo:IW-FullFlow-measureOnly} requires another round of mollification.

\begin{proof}[Proof of Theorem \ref{theo:IW-PartialFlow-measureOnly}]

\emph{Step 1. Mollification.}  We carry out mollification in two steps - firstly we construct the mollifying sequence and later show its convergence.
As in the proof of Theorem \ref{theo:IW-FullFlow-measureOnly}, we pick a smooth function $\rho : \bR^d \to \bR^d$ with compact support, letting for any $t \in [0,T]$, $(u\star \rho)_t(\mu) := u_t(\mu \circ \rho^{-1})$ and for any $t \in [0,T]$ having $u$ $\bP$-a.s. bounded and continuous at every pair $(t,\mu) \in \cP_2(\bR^d)$, $\partial_\mu u$ and $\partial_v\partial_\mu u$ $\bP$-a.s. bounded and continuous at every triple
$(t,\mu,v)$ for $v \in \Supp (\mu)$ and $\partial^2_{\mu} u$ $\bP$-a.s. bounded and continuous at every quadruple $(t,\mu,v,v^\prime)$ for $v,v^\prime \in \Supp(\mu)$, what follows from local boundedness of $u$ and its derivatives. 
Now picking the sequence $(\rho_n)_{n \geq 1}$ in a way that $(\rho_n, \partial_x\rho_n, \partial^2_{xx}\rho_n)(x) \to (x, I_d,0)$ as $n\to \infty$, we can conclude that $(u\star \rho_n)_t(\mu),\partial_\mu (u \star \rho_n)_t(\mu,v), \partial_v \partial_\mu (u \star \rho_n)_t(\mu,v)$ and $\partial^2_{\mu} (u \star \rho_n)_t(\mu,v,v^\prime)$ converge $\bP$-a.s. to $u_t(\mu),\partial_\mu u_t (\mu,v)$, $\partial_v \partial_\mu u_t (\mu,v)$ and $\partial^2_{\mu} u_t(\mu,v,v^\prime)$ respectively. Thus we can assume $u$ and its derivatives to be $\bP$-a.s. bounded. 

Again as in Theorem \ref{theo:IW-FullFlow-measureOnly} we consider $\mu \mapsto (u \star \rho)(\mu * \phi_G)$ instead of $\mu \mapsto (u \star \rho)(\mu)$ with $\phi_G$ - density of standard $d$-dimensional Gaussian distribution $N(0, I_d)$ on $\bR^d$ and $(\mu * \phi_G)(x):=\int_{\bR^d} \phi_G(x-y) d\mu(y)$. Now the support of $\mu *\phi_G$ is the whole $\bR^d$ and $\partial_\mu u, \partial_v\partial_\mu u$ and $\partial^2_{\mu} u$ are $\bP$-a.s. continuous at every triple $(t,\mu \circ \phi_G, v), t \in [0,T], v \in \bR^d $. Installing $\phi_{\varepsilon,G}$ - $d$-dimensional Gaussian distribution $N(0;\varepsilon I_d)$ and letting $\varepsilon \searrow 0$, we conclude the $\bP$-a.s. convergence of $\partial_\mu u_t (\mu*\phi_{\varepsilon,G},v)$, $\partial_v \partial_\mu u_t (\mu*\phi_{\varepsilon,G},v)$ and $\partial^2_{\mu} u_t(\mu*\phi_{\varepsilon,G},v,v^\prime)$ to $\partial_\mu u_t (\mu,v)$, $\partial_v \partial_\mu u_t (\mu,v)$ and $\partial^2_{\mu} u_t(\mu,v,v^\prime)$ respectively.
Thus we can assume $\bP$-a.s. uniform continuity of measure expansion terms for the whole $\bR^d$.

Now we are to show that mollification procedure is well-posed. 
It is straightforward to verify that $u_n:= u \star \rho_n$ satisfies $\bP$-a.s. \eqref{cond:Partial-Flow-IntRF-FullChainRule-U} uniformly in $n \geq 1 $.
Applying twice the dominated convergence theorem we conclude the $\bP$-a.s. convergence for all the terms but the stochastic integral. To handle the latter one additionally requires an argument across the quadratic variation, as written in Theorem \ref{theo:ExtendItoWentzell} and localisation.

As before we define $\phi_t=\psi^0_t=\psi^1_t := 0$, for those $t$ where the functions are not well-defined. We copy the procedure above to conclude that $\phi,\psi^0,\psi^1~\bP$-a.s. have  compact support. 

\emph{Step 2. Approximation.} By our mollification argument one can assume the $u, \partial_\mu u, \partial_v\partial_\mu u, \partial_x\partial_\mu$ and $\partial_\mu^2 u$ to be $\bP$-a.s. bounded and $\bP$-a.s. uniformly continuous in respective topology spaces.
We construct twin processes $(Y^l_t)_{t \in [0,T]}, \: l = 1,\dots, N$ of $(Y_t)_{t \in [0,T]}$ each supporting its own independent Brownian motion $(W^{1,l}_t)_{t\in[0,T]}$ that generate $(\Omega^{1,l},\cF^{1,l}, \bF^{1,l},\bP^{1,l})$ alongside with $\cF_0^l$, altogether forming a copy of $(\Omega^{1},\cF^{1}, \bF^{1},\bP^{1})$. Since the stochastic basis $(\Omega,\cF,\bF,\bP)$ of our initial space is constructed as a completion of $(\Omega^0 \times \Omega^1, \cF^0 \otimes \cF^1, \bF^0 \otimes \bF^1, \bP^0 \otimes \bP^1)$ augmented in a right-continuous way and then completed, we introduce a new product basis $(\Omega^{l},\cF^{l}, \bF^{l},\bP^{l}) $ to be completion of $(\Omega^0 \times \Omega^{1,l}, \cF^0 \otimes \cF^{1,l}, \bF^0 \otimes \bF^{1,l}, \bP^0 \otimes \bP^{1,l})$ augmented in right-continuous way and then completed. Now we copy the dynamics of $(Y_t)_{t \in [0,T]}$, as 
$$\dd Y_t^l = b_t^l \dd t + \sigma^{0,l}_t \dd W_t^0\dd t + \sigma_t^{1,l} \dd W^{1,l}_t, \quad Y^l_0 = Y_0^l,$$
where $Y_0^l, b^l_t, \sigma_t^{0,l}$ and $\sigma_t^{1,l}$ are copies of $Y^0,b_t, \sigma^0_t, \sigma_t^1$ respectively.
Now we construct a total stochastic basis $(\Omega^{1,\dots,N},\cF^{1,\dots,N}, \bF^{1,\dots,N},\bP^{1,\dots,N})$, where
\begin{align*}
    \Omega^{1,\dots, N} = \Omega^0 \times \Omega^1 \times \prod_{l=1}^N \Omega^{1,l}&, \quad  \cF^{1,\dots, N} = \cF^0 \otimes \cF^1 \otimes \bigotimes_{l=1}^N \cF^{1,l}, \\
    \bF^{1,\dots, N} = \bF^0 \otimes \bF^1 \otimes \bigotimes_{l=1}^N \bF^{1,l}&, \quad 
    \bP^{1,\dots, N} = \bP^0 \otimes  \bP^1 \otimes \bigotimes_{l=1}^N \bP^{1,l},
\end{align*}
where we again and finally augment the filtration in a right-continuous way and complete.
We underline that processes $\big((Y^l_t)(\omega^0,\cdot),b^l_t(\omega^0,\cdot),\sigma^{0,l}_t(\omega^0,\cdot),\sigma^{1,l}_t(\omega^0,\cdot)\big)_{t \in [0,T]},~ l = 1, \dots N$ are i.i.d. $\bP^0$-a.s.  

Hereinafter while fixing the $\omega^0\in \Omega^0$, and for the sake of simplicity we will omit adding the $(\omega^0,\cdot)$ to the processes $Y_t,~b_t,~\sigma^0_t,\sigma^1_t$ to highlight the respective relation to $\omega^0$, but will leave in after $\bar\mu^N_t$ as to underline the nature of this dependency.

Denoting the flow of marginals for almost all $\omega^0 \in \Omega^0$ as $\bar \mu_t^N(\omega^0,\cdot) := \frac1N\sum_{l = 1}^N \delta_{Y_t^l(\omega^0,\cdot)}$ for $t \in [0,T]$ and the empirical projection of $u$ as $u^N$ we proceed by applying It\^o-Wentzell formula (Theorem \ref{theo:ExtendItoWentzell}) to $u^N_t$ and using Proposition \ref{prop:DerivativeRelations-Space-2-Lions} to expand $\bP $-a.s.
\begin{align*}
    &u^N_t(Y^1_t,\dots,Y^N_t) - u^N_0(Y^1_0, \dots, Y^N_0) = \int_0^t \phi_s(\bar \mu^N_s(\omega^0,\cdot)) \dd s + \int_0^t \psi_s^{0}(\bar \mu^N_s(\omega^0,\cdot)) \cdot \dd W^0_s \\
    &+ \int_0^t\psi_s^{1}(\bar \mu^N_s(\omega^0,\cdot)) \cdot \dd W^{1}_s
    +   \frac{1}{N} \sum_{l=1}^N \int_0^t \partial_\mu u_s(\bar \mu_s^N(\omega^0,\cdot), Y_s^l) \cdot b_s^l \dd s \\
    &
    +   \frac{1}{N}\sum_{l=1}^N \int_0^t \partial_\mu u_s(\bar \mu_s^N(\omega^0,\cdot), Y_s^l) \cdot \sigma_s^{0,l} \dd W_s^{0}
    +   \frac{1}{N}\sum_{l=1}^N \int_0^t \partial_\mu u_s(\bar \mu_s^N(\omega^0,\cdot), Y_s^l) \cdot \sigma_s^{1,l} \dd W_s^{1,l} \\
    &
    +   \frac{1}{2N}  \sum_{l = 1}^N\int_0^t \trace\big\{\partial_v\partial_\mu u_s(\bar \mu_s^N(\omega^0,\cdot),Y_s^l) (\sigma_s^{0,l} (\sigma_s^{0,l})^\intercal+ \sigma_s^{1,l} (\sigma_s^{1,l})^\intercal)\big\}\dd s \\
    &
    +   \frac{1}{2N^2}  \sum_{l,l^\prime = 1}^{N,N} \int_0^t \trace\big\{\partial^2_{\mu} u_s(\bar \mu_s^N(\omega^0,\cdot),Y_s^l,Y_s^{l^\prime}) ~\sigma_s^{0,l} (\sigma_s^{0,l^\prime})^\intercal\big\}\dd s \\
    &
    +   \frac{1}{2N^2} \sum_{l = 1}^N \int_0^t \trace\big\{\partial^2_{\mu} u_s(\bar \mu_s^N(\omega^0,\cdot),Y_s^l,Y_s^l) ~\sigma_s^{1,l} (\sigma_s^{1,l})^\intercal\big\}\dd s \\
    &
    +   \frac{1}{N} \sum_{l = 1}^N\int_0^t \trace\big\{\partial_\mu \psi^0_s (\bar \mu^N_s(\omega^0,\cdot),Y_s^l) (\sigma^{0,l}_s)^\intercal\big\} \dd s.
\end{align*}

We highlight that we do not have $\partial_{\mu} \psi^1$ terms due to the fact that $\langle W^1,W^{1,l}\rangle_t = 0,~l = 1,\dots,N$, whilst one of the $\partial^2_\mu u$ terms is summed up diagonally, due to independence of $W^i, W^j,~ i,j \in 1,\dots,N,~ i\neq j$. 
Taking conditional expectations on the above formula $\bE^{1,1,\dots, N}\big[ \cdot\big] :=\bE^{\bP^{1,\dots,N}} \big[ \cdot |\: \cF^0 \otimes \cF^1\big]$ we have by the stochastic Fubini theorem (see \cite{Veraar2010FubiniRevisited}*{Theorem 3.5}) and boundedness of $\partial_\mu^2 u$ for any $t \in [0,T],\: \bP$-a.s.
\begin{align}
    \nonumber
    &\bE^{1,1,\dots, N}\Big[u_t(\bar\mu^N_T(\omega^0,\cdot)]\Big] - \bE^{1,1,\dots N}\Big[u_0(\bar\mu^N_0)\Big] = \bE^{1,1,\dots, N}\Big[\int_0^t \phi_s(\bar \mu^N_s(\omega^0,\cdot)) \dd s \Big] \\
    \nonumber
    &+ \bE^{1,1,\dots, N}\Big[\int_0^t \psi_s^{0}(\bar \mu^N_s(\omega^0,\cdot)) \cdot \dd W^0_s \Big]
    + \bE^{1,1,\dots, N}\Big[\int_0^t\psi_s^{1}(\bar \mu^N_s(\omega^0,\cdot)) \cdot  \dd W^{1}_s\Big]\\
    \nonumber
    &
    +
    \int_0^t \bE^{1,1,\dots N}\Big[\partial_\mu u_s(\bar \mu_s^N(\omega^0,\cdot), Y_s^1) \cdot b_s^1 \Big]\dd s \\
    \label{eq:PartialFlow-MeasureOnly-Approximation}
    &
    + \int_0^t \bE^{1,1,\dots N}\Big[(\sigma_s^{0,1})^\intercal\partial_\mu u_s(\bar \mu_s^N(\omega^0,\cdot), Y_s^1)   \Big] \cdot \dd W_s^{0}   \\
    \nonumber
    &
    +0+ \frac{1}{2}  \int_0^t\bE^{1,1,\dots N}\Big[ \trace\big\{\partial_v\partial_\mu u_s(\bar \mu_s^N(\omega^0,\cdot),Y_s^1) (\sigma_s^{0,1} (\sigma_s^{0,1})^\intercal+ (\sigma_s^{1,1})(\sigma_s^{1,1})^\intercal)\big\}\Big]\dd s\\
    \nonumber
    &
    +
    \frac{1}{2} \int_0^t \bE^{1,1,\dots N}\Big[ \trace\big\{\partial^2_{\mu} u_s(\bar \mu_s^N(\omega^0,\cdot),Y_s^1,Y_s^{2}) ~\sigma_s^{0,1} (\sigma_s^{0,2})^\intercal\big\} \Big]\dd s\\
    \nonumber
    &
    +
    \int_0^t \bE^{1,1,\dots N}\Big[\trace\big\{\partial_\mu \psi^0_s (\bar \mu^N_s(\omega^0,\cdot),Y_s^1) (\sigma^{0,1}_s)^\intercal\big\}\Big] \dd s
    + O\big(\frac{1}{N}\big),
\end{align}
with $O(1/N)$ standing for the Bachmann-Landau big-$O$ notation (sequence bounded by $\frac{C}{N},$ for some $C \geq 0$) which appears from the second $1/N^2$ summation term (notice the sum is over only one index).

Lifting to  $L_2$-space and using continuity of the underlying process, as in \cite{CarmonaDelarue2017book2}*{Theorem 4.14}, we conclude that $\bP^0 \otimes \bP^1$-a.s. 
$$\limsup_{N \to \infty} \bE^{1,1,\dots N}\Big[\sup_{0 \leq s \leq T}
W_2\big( \bar\mu^N_s(\omega^0,\cdot),\mu_s(\omega^0,\cdot)\big)^2\Big] = 0.$$

Now due to the continuity in the measure-component and dominated convergence theorem we can pass to the limit in  \eqref{eq:PartialFlow-MeasureOnly-Approximation} (as $N \to \infty)$ to conclude the formula. The convergence of stochastic integral is secured by localisation and arguing across quadratic variation. We swap the integral and expectation by stochastic Fubini theorem. Finally we rewrite  the expectations in the RHS upon dependance on two model particles (living on $(\Omega^0 \times \tilde\Omega^1, \cF^0 \otimes\tilde\cF^1, \bF^0 \otimes \tilde\bF^1, \bP^0 \otimes \tilde\bP^1)$ and $(\Omega^0 \times \hat\Omega^1, \cF^0 \otimes\hat\cF^1, \bF^0 \otimes \hat\bF^1, \bP^0 \otimes \hat\bP^1)$ respectively).
Measurability is again secured by Remark \ref{rem:measurability-main-remark}.
\end{proof}

\subsection{The joint chain rule}

Now we are ready to prove a joint chain rule for $u: \Omega \times [0,T] \times\bR^d \times\cP_2(\bR^d) \to \bR$ as given by \eqref{eq:RandomField-2BM}.
We introduce minimal regularity requirements.

\begin{definition}
\label{def:PartialC2-RandomField-PartialFlow-ExtendedChainRule}
We say the random field $u:\Omega\times [0,T] \times \bR^d \times \cP_2(\bR^d)\to \bR$ given in \eqref{eq:RandomField-2BM} is \emph{RF-Joint-Generally-$\cC^{2}$} if 
\begin{enumerate}[i)]
    \item $u$ is \emph{RF-Joint-Partially}-$\cC^2$ for $\psi_t := (\psi_t^0,\psi_t^1)^\intercal$ and $W_t := (W^0_t,W^1_t)^\intercal$;

    \item For almost any $t \in[0,T]$, the maps $(x,\mu)\mapsto \phi_t(x,\mu),~(x,\mu)\mapsto \psi^0_t(x,\mu),~(x,\mu)\mapsto \psi^1_t(x,\mu)$, are $\bP$-a.s.~joint-continuous in product topology of $\bR^d \times \cP_2(\bR^d)$ at every pair $(x,\mu)\in \bR^d \times \cP_2(\bR^d)$;
    
    \item For almost any $t \in [0,T], ~ x\in \bR^d$, the map $\mu \mapsto \psi^0_t(x,\mu)$ is $L$-differentiable $\bP$-a.s.~at every point $\mu \in \cP_2(\bR^d)$. Moreover, $\partial_\mu \psi^0_t(x,\mu,v)$ has a $\mu$-version such that $\partial_\mu \psi^0_t(x,\mu,v)$ is $\bP$-a.s. joint-continuous at every pair $(\mu,v),~\mu \in \cP_2(\bR^d),~v \in \Supp(\mu),~\bP$-a.s.;
    
    \item For any $(t,\mu,v) \in [0,T] \times \cP_2(\bR^d) \times \Supp(\mu)$, the map $x \mapsto \partial_\mu u_t(x,\mu,v)$ is $\bR^d$-differentiable $\bP$-a.s. at every point $x \in \bR^d$. Moreover, $\partial_x\partial_\mu u_t(x,\mu,v)$ has a $\mu$-version such that $\partial_x \partial_\mu u_t(x,\mu,v)$ is $\bP$-a.s. joint-continuous at every quadruple $(t,x,\mu,v),$ with $ (t,x,\mu) \in [0,T] \times \bR^d \times \cP_2(\bR^d),~v \in \Supp(\mu),~\bP$-a.s.;
    
    \item For any $(t,x,v) \in [0,T]\times \bR^d \times \Supp(\mu)$, the map $\mu \mapsto \partial_\mu u_t(x,\mu,v)$ is L-differentiable $\bP$-a.s. at every point $ \mu \in \cP_2(\bR^d)$. Moreover, $\partial^2_{\mu} u_t(x,\mu,v,v^\prime)$ has a $\mu$-version such that $\partial^2_\mu u_t(x,\mu,v,v^\prime)$ is $\bP$-a.s. joint-continuous at every quintuple $(t,x,\mu,v,v^\prime),$ with $ (t,x,\mu) \in [0,T] \times \bR^d \times \cP_2(\bR^d),~v,v^\prime \in \Supp(\mu),~\bP$-a.s..
\end{enumerate}
\end{definition}
We highlight again the slight abuse of notation in the way point i) in the above Definition \ref{def:PartialC2-RandomField-PartialFlow-ExtendedChainRule} is formulated. This avoids re-stating a full assumption that is nonetheless clear to understand.

\begin{theorem}
\label{theo:IW-PartialMeasureFlow-Proc+Measure-2BM}
Let $u$ be \textit{RF-Joint-Generally}-$\cC^{2}$ It\^o random field \eqref{eq:RandomField-2BM}. Assume for any compact $K\subset \bR^d \times\cP_2(\bR^d)$ that
\begin{align*}
	\int_0^T \sup_{(x,\mu) \in K}\bigg\{|\phi_s(x,\mu)| 
	&+ |\psi^0_s(x,\mu)|^2 
	+ |\partial_x \psi^0_s(x,\mu)|^2 + \int_{\bR^d}  |\partial_\mu \psi^0_s(x,\mu,v)|^2 \mu(\dd v)\\
	&+|\psi^1_s(x,\mu)|^2 
	+ |\partial_x \psi^1_s(x,\mu)|^2\bigg\}\dd s < \infty, \quad \bP\text{-a.s.},
\end{align*}
and
\begin{align*}
	\sup_{(t,x,\mu) \in [0,T] \times K}
	\Big\{ &\int_{\bR^d}\Big[|\partial_\mu u_t(x,\mu, v)|^2
	+ |\partial_v\partial_\mu u_t(x,\mu, v)|^2
	+|\partial_x\partial_\mu u_t(x,\mu, v)|^2\Big]\mu(\dd v) 
	\\
	&+\int_{\bR^d \times \bR^d}|\partial^2_{\mu} u_t(\mu,v,v^\prime)|^2 \mu(\dd v)\mu(\dd v^\prime) \Big\} < \infty, \quad \bP\text{-a.s.}.
\end{align*}
Take $(\mu_t)_{t \in [0,T]} = \big(\Law(Y_t(\omega^0,\cdot))\big)_{t \in [0,T]}$ with $(Y_t)_{t \in [0,T]}$ solution to \eqref{eq:GenericYprocess-PartialFlow22} under Assumption \ref{Assump:SDE-Y-mu-2BM} and $(X_t)_{t \in [0,T]}$ given by \eqref{eq:ProcessX-partialFlowMeasures} under Assumption \ref{Assump:SDE-X-2BM}.

Then $(u_t(X_t,\mu_t))_{t\in[0,T]}$ is an It\^o process $\bP$-a.s. satisfying the expansion 
\begin{align}
\nonumber
u_T(X_T&,\mu_T) - u_0(X_0,\mu_0) 
= 
\int_0^T\phi_s(X_s,\mu_s) \dd s + \int_0^T\psi^0_s(X_s,\mu_s) \cdot \dd W^0_s+ \int_0^T\psi^1_s(X_s,\mu_s) \cdot \dd W^1_s
\\
\nonumber
&
+ \int_0^T \partial_xu_s(X_s,\mu_s)  \cdot\beta_s \dd s 
+ \int_0^T \partial_xu_s(X_s,\mu_s)  \cdot\gamma^0_s \dd W^0_s
+ \int_0^T \partial_xu_s(X_s,\mu_s)  \cdot\gamma^1_s \dd W^1_s \\
\nonumber
&
+ \int_0^T \frac{1}{2}~\trace\big\{\partial_{xx}^2 u_s(X_s,\mu_s)  (\gamma^0_s (\gamma^0_s)^\intercal  +  \gamma^1_s (\gamma^1_s)^\intercal)\big\} \dd s
\\
\nonumber
&
+   \int_0^T \tilde{\bE}^1 \Big[\partial_\mu u_s(X_s,\mu_s,\tilde{Y}_s)\cdot\tilde{b}_s\Big]\dd s 
+   \int_0^T \tilde{\bE^1} \Big[(\tilde{\sigma}^0_s)^\intercal\partial_\mu u_s(X_s,\mu_s,\tilde{Y}_s)\Big]\cdot \dd W_s^0 \\
\label{eq:Partial-Flow-Full-Chain-Rule}
&
+   \int_{0}^{T}\frac{1}{2}~\tilde{\bE}^1\Big[  \trace \big\{\partial_v\partial_\mu u_s(X_s,\mu_s, \tilde{Y}_s)  (\tilde{\sigma}^0_s(\tilde{\sigma}_s^0)^{\intercal}+\tilde{\sigma}^1_s(\tilde{\sigma}_s^1)^{\intercal})\big\}\Big] \dd s
\\
\nonumber
&
+\int_0^T \frac12~\hat{\bE}^1 \Big[\tilde{\bE}^1 \Big[ \trace \big\{\partial^2_{\mu} u_s(X_s,\mu_s,\tilde{Y}_s,\hat{Y}_s) ~\tilde{\sigma}^0_s(\hat \sigma_s^0)^\intercal\big\} \Big]\Big] \dd s \\
\nonumber
&
+ \int_{0}^{T}\tilde{\bE}^1\Big[  \trace \big\{\partial_x\partial_\mu u_s(X_s,\mu_s, \tilde{Y}_s) ~\gamma^0_s(\tilde{\sigma}_s^0)^{\intercal}\big\}\Big] \dd s
+ \int_0^T \trace \big\{\partial_x\psi^0_s(X_s,\mu_s) (\gamma^0_s)^\intercal \big\} \dd s\\
\nonumber
&
+ \int_0^T \trace \big\{\partial_x\psi^1_s(X_s,\mu_s) (\gamma^1_s)^\intercal \big\} \dd s
+ \int_{0}^{T}\tilde{\bE}^1\Big[\trace\big\{ \partial_\mu \psi^0_s(X_s,\mu_s,\tilde{Y}_s) (\tilde{\sigma}^0_s)^\intercal\big\}\Big] \dd s,
\end{align}
where the formula above $\tilde{\bE}$ and $\hat{\bE}$ denote the expectation acting on the model twin spaces $(\tilde{\Omega}, \tilde{\bF}, \tilde{\bP})$ and $(\hat{\Omega}, \hat{\bF}, \hat{\bP})$ respectively, and let the processes $(\tilde{Y}_t, \tilde{b}_t, \tilde{\sigma}_t)_{t\in[0,T]}$ and $(\hat{Y}_t, \hat{b}_t, \hat{\sigma}_t)_{t\in[0,T]}$ be the independent twin processes of $(Y_t,b_t,\sigma_t)_{t\in[0,T]}$ respectively living within.
\end{theorem}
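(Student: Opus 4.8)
The plan is to depart from the time-discretisation used for the full-flow joint chain rule (Theorem~\ref{theo:IW-FUllmeasureFlow-Proc+Measure-1BM}). There the forward measure increment could be frozen because $(\mu_t)$ was deterministic; here $(\mu_t)_{t\in[0,T]}$ is itself $W^0$-adapted, so freezing $\mu_{\tip}$ would both create an anticipating $dW^0$-integral over $[\ti,\tip]$ and, more seriously, destroy the genuinely new cross-variation between the $W^0$-driving of $X$ and of the measure flow. Instead I would re-run the empirical-projection argument of Theorem~\ref{theo:IW-PartialFlow-measureOnly} \emph{verbatim}, but keeping the spatial variable $x$ live and pushing the enlarged process $(X_t,Y^1_t,\dots,Y^N_t)$ through the classical It\^o-Wentzell formula all at once; every term of \eqref{eq:Partial-Flow-Full-Chain-Rule} then drops out of a single application of the chain rule.

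First I would perform the two-step mollification in the measure argument exactly as in Theorem~\ref{theo:IW-PartialFlow-measureOnly} (convolution of the argument measure by a compactly supported $\rho_n$ with $(\rho_n,\partial_x\rho_n,\partial^2_{xx}\rho_n)\to(x,I_d,0)$, then Gaussian smoothing $\mu\mapsto\mu*\phi_{\varepsilon,G}$ to spread the support onto all of $\bR^d$), together with a localisation of $X$ and $Y$, reducing to the case where $u,\partial_\mu u,\partial_v\partial_\mu u,\partial^2_\mu u,\partial_x\partial_\mu u,\partial_\mu\psi^0$ are bounded and uniformly continuous. The $x$-regularity need not be mollified, since the reduced-regularity version Theorem~\ref{theo:ExtendItoWentzell} asks only for $u\in\cC^2$ and $\psi^0,\psi^1\in\cC^1$ in $x$, which \emph{RF-Joint-Generally-$\cC^2$} already provides. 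I would then rebuild the interacting particle system $(Y^l_t)_{l=1}^N$ on the product basis constructed just before the proof of Theorem~\ref{theo:IW-PartialFlow-measureOnly}, each $Y^l$ sharing the common noise $W^0$ and carrying its own idiosyncratic $W^{1,l}$, set $u^N_t(x,y^1,\dots,y^N):=u_t(x,\tfrac1N\sum_l\delta_{y^l})$, and mollify in the $y$-variables to obtain $u^{N,n}$ (and likewise $\phi^{N,n},\psi^{0,N,n},\psi^{1,N,n}$), which is then jointly $\cC^2$ in $(x,y^1,\dots,y^N)$.

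The core step is to stack $(W^0,W^1,W^{1,1},\dots,W^{1,N})$ into one Brownian motion and apply Theorem~\ref{theo:ExtendItoWentzell} to $u^{N,n}$ along the enlarged It\^o process $(X_t,Y^1_t,\dots,Y^N_t)$, then rewrite every spatial derivative through Proposition~\ref{prop:DerivativeRelations-Space-2-Lions}; the decisive bookkeeping is which quadratic covariations survive. Since $X$ and each $Y^l$ share only $W^0$, the mixed block $\partial^2_{xy^l}u^{N,n}=\tfrac1N\partial_x\partial_\mu u^N$ contracts with $\gamma^0(\sigma^{0,l})^\intercal$ alone and, after $\tfrac1N\sum_l$, yields the new term $\tilde{\bE}^1[\trace\{\partial_x\partial_\mu u\,\gamma^0(\tilde\sigma^0)^\intercal\}]$; likewise the It\^o-Wentzell cross-variation gradient $\partial_{y^l}\psi^{0,N,n}=\tfrac1N\partial_\mu\psi^0$ contracts with $\sigma^{0,l}$ to give $\tilde{\bE}^1[\trace\{\partial_\mu\psi^0(\tilde\sigma^0)^\intercal\}]$, while $\partial_x\psi^{0,N,n}(\gamma^0)^\intercal$ and $\partial_x\psi^{1,N,n}(\gamma^1)^\intercal$ reproduce the pure It\^o-Wentzell trace terms. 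Because the field's idiosyncratic noise $W^1$ is independent of the particles' $W^{1,l}$, no $\partial_\mu\psi^1$ contribution appears, and the second-measure-derivative sum is driven by $W^0$ only: the double sum $\tfrac1{2N^2}\sum_{l,l'}\partial^2_\mu u\,\sigma^{0,l}(\sigma^{0,l'})^\intercal$ converges to $\tfrac12\hat{\bE}^1\tilde{\bE}^1[\trace\{\partial^2_\mu u\,\tilde\sigma^0(\hat\sigma^0)^\intercal\}]$, whereas its diagonal part and the $W^1$-driven single sum $\tfrac1{2N^2}\sum_l\partial^2_\mu u\,\sigma^{1,l}(\sigma^{1,l})^\intercal$ are of order $O(1/N)$ and vanish. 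Taking the conditional expectation $\bE^{1,1,\dots,N}[\,\cdot\mid\cF^0\otimes\cF^1]$, stochastic Fubini and the i.i.d.\ structure then collapse the $\tfrac1N\sum_l$ sums to the single-particle expectations $\tilde{\bE}^1[\cdot]$, exactly as in Theorem~\ref{theo:IW-PartialFlow-measureOnly}.

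Finally I would pass to the limit $N,n\to\infty$: conditional propagation of chaos gives $\bE^{1,1,\dots,N}[\sup_{s}W_2(\bar\mu^N_s,\mu_s)^2]\to0$ for $\bP^0\otimes\bP^1$-a.e.\ $\omega$, and then uniform continuity of $u$ and of its (now bounded) derivatives, together with the square-integrability hypotheses on $\phi,\psi^0,\psi^1$ and on the gradients $\partial_x\psi^0,\partial_x\psi^1,\partial_\mu\psi^0,\partial_\mu u,\partial_v\partial_\mu u,\partial^2_\mu u,\partial_x\partial_\mu u$, lets dominated convergence handle the Lebesgue integrals; the $dW^0$ and $dW^1$ stochastic integrals are controlled by localisation and the $L^2$-isometry/quadratic-variation estimate already used in Theorem~\ref{theo:ExtendItoWentzell}. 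Undoing the mollification via the uniform-in-$n$ bounds recovers the formula for the original $u$, and measurability is handled by Remark~\ref{rem:measurability-main-remark}. I expect the main obstacle to be precisely this cross-variation bookkeeping: cleanly isolating the two new terms $\int_0^T\tilde{\bE}^1[\trace\{\partial_x\partial_\mu u\,\gamma^0(\tilde\sigma^0)^\intercal\}]ds$ and $\int_0^T\tilde{\bE}^1[\trace\{\partial_\mu\psi^0(\tilde\sigma^0)^\intercal\}]ds$ from the enlarged expansion while rigorously justifying that every $W^1$-versus-$W^{1,l}$ covariation vanishes and that the diagonal second-measure-derivative double sum contributes only an $O(1/N)$ error that disappears in the limit.
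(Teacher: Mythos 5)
Your proposal is correct and follows essentially the same route as the paper: empirical projection of $u$ onto the conditional particle system $(Y^1,\dots,Y^N)$ sharing the common noise $W^0$, a single application of the reduced-regularity It\^o--Wentzell formula (Theorem~\ref{theo:ExtendItoWentzell}) to the enlarged process $(X_t,Y^1_t,\dots,Y^N_t)$, identification of derivatives via Proposition~\ref{prop:DerivativeRelations-Space-2-Lions}, conditional expectation $\bE[\,\cdot\,|\,\cF^0\otimes\cF^1]$ with stochastic Fubini, and conditional propagation of chaos to pass to the limit -- including the same reason for abandoning the time-discretisation of Theorem~\ref{theo:IW-FUllmeasureFlow-Proc+Measure-1BM} and the same cross-variation bookkeeping ($\partial_x\partial_\mu u$, $\partial_\mu\psi^0$, diagonal $O(1/N)$ terms). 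The only deviation is your extra mollification $u^{N,n}$ in the $y$-variables, which is redundant here (though harmless, provided $N\to\infty$ is taken before $n\to\infty$): since \emph{RF-Joint-Generally}-$\cC^2$ already assumes $\partial^2_\mu u$ exists, the empirical projection $u^N$ is itself $\cC^2$, which is precisely why the paper skips that round of smoothing in the conditional-flow setting.
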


It is interesting to mention the $\partial_x \partial_\mu u$ term, which also appears in Theorem \ref{theo-Classic-Ito-Lions-PartialFlow}, it is nothing else but the cross-variation of the process $X$ and the model particle $\tilde Y$. The very last two lines contain all the possible ways of cross-interactions, namely, between the random field $u$, the process $X$ and the random measure $\mu$.

\begin{remark}
According to \cite{crisan2018smoothing}, $\partial_x\partial_\mu u=\partial_\mu\partial_x u$, when both crossed derivatives exist and are Lipschitz.  However, as one can notice in the proof, within our mollification procedure for empirically projected mapping the space derivatives could be swapped in the convenient way to secure the existence of the limit - desired derivative. Thus in the Definition \ref{def:PartialC2-RandomField-PartialFlow-ExtendedChainRule} one can equally demand the existence of $\partial_\mu \partial_x u$ instead of $\partial_x \partial_\mu$. The same applies for respective derivatives for $\phi, \psi^0$ and $\psi^1$.
\end{remark}

The proof used in Theorem \ref{theo:IW-FUllmeasureFlow-Proc+Measure-1BM} does not carry directly to this case, crucially due to the passage to the limit in \eqref{eq:Convergence-Of-ForwardMeasureFlow} as the measure flow is random. Of the possible angles of attack to show the result the direct application of the empirical projection approach is the simplest. We follow it and provide alternative arguments when the passage to the limit issue arises. 

\begin{proof}[Proof of Theorem \ref{theo:IW-PartialMeasureFlow-Proc+Measure-2BM}]

In view of the proof of Theorem \ref{theo:IW-PartialFlow-measureOnly} we assume a compactification \& mollification argument in the measure component as been applied and hence we do not repeat its construction. Moreover, without loss of generality assume $(b_t)_{t \in [0,T]}$ and $(\sigma_t)_{t \in [0,T]}$ to be bounded.

Again as in previous theorem, we consider $u^N$- empirical projection of $u$, and construct generic $(Y^l_t)_{t \in [0,T]}$ in the same way, underlining that the processes $Y^l_t(\omega^0,\cdot)$, $b^1_t(\omega^0,\cdot)$, $\sigma^0_t(\omega^0,\cdot)$,  $\sigma^1_t(\omega^0,\cdot)$, ${t \in [0,T]}$, $l = 1, \dots N$ are i.i.d. 
For $\phi_t,~\psi_t :=  \left( \begin{smallmatrix} \psi_t^0 & 0 \\ 0 & \psi_t^1 \end{smallmatrix} \right)$ and $W_t := (W^0_t,W^1_t)^\intercal$ we copy the same procedure as before to have for almost all $t$, $\bP$-a.s.
\begin{align*}
    \bE^{1,1,\dots,N}\big[\phi^N_t(X_t,Y^1_t,\dots, Y^N_t)\big] &
    \to \phi_t(X_t,\mu_t)
    \quad\textrm{and}\quad
    \bE^{1,1,\dots,N}\big[\psi^N_t(X_t,Y^1_t,\dots, Y^N_t)\big] 
    \to \psi_t(X_t,\mu_t),
\end{align*}
as $N\to \infty$, together with
\begin{align*}
    &\bE^{1,1,\dots,N}\Big[\int_0^t\phi^N_s(X_s,Y^1_s,\dots, Y^N_s)\dd s\Big] \to \int_0^t\phi_s(X_s,\mu_s)\dd s,\\
    &\bE^{1,1,\dots,N}\Big[\int_0^t\psi^N_s(X_s,Y^1_s,\dots, Y^N_s)\cdot \dd W_s\Big] \to \int_0^t\psi_s(X_s,\mu_s)\cdot \dd W_s,
\end{align*}
$\bP$-a.s. as $N \to \infty$ for all $t \in [0,T]$. As before, for the sake of simplicity we omit adding the $(\omega^0,\cdot)$ to the processes $Y_t,~b_t,~\sigma^0_t,\sigma^1_t$, but will leave one for $\bar\mu^N_t$.

Since all conditions of Theorem \ref{theo:ExtendItoWentzell} hold we apply it to $u^N_t(X_t,Y^1_t,\dots,Y^N_t)$ getting
\begin{align*}
    u^N(&X_T,Y_T^1,\dots,Y_T^N) - u^N(X_0,Y_0^1\dots,Y_0^N) 
    = \int_0^T\phi^N_s(X_s,Y_s^1,\dots,Y_s^N)\dd s\\
    & 
    + \int_0^T\psi^{0,N}_s(X_s,Y_s^1,\dots,Y_s^N)\cdot \dd W^0_s
    + \int_0^T\psi^{1,N}_s(X_s,Y_s^1,\dots,Y_s^N)\cdot \dd W_s^1\\
    & 
    + \int_0^T\partial_xu^{N}_s(X_s,Y_s^1,\dots,Y_s^N)\cdot\beta^1_s\dd s
    + \int_0^T\partial_xu^{N}_s(X_s,Y_s^1,\dots,Y_s^N)\cdot\gamma^0_s\dd W_s^0\\
    &
    + \int_0^T\partial_xu^{N}_s(X_s,Y_s^1,\dots,Y_s^N) \cdot\gamma^1_s\dd W_s^1
    + \frac12\int_0^T\trace\big\{\partial_{xx}u^{N}_s(X_s,Y_s^1,\dots,Y_s^N)~ \gamma^0_s(\gamma^0_s)^\intercal\big\}\dd s\\
    &
    +\frac12 \int_0^T\trace\big\{\partial_{xx}u^{N}_s(X_s,Y_s^1,\dots,Y_s^N)~ \gamma^1_s(\gamma^1_s)^\intercal\big\}\dd s\\
    &
    + \int_0^T  \trace\big\{\partial_x\psi^{0,N}_s(X_s,Y_s^1,\dots,Y_s^N) (\gamma_s^0)^\intercal \big\}\dd s
    + \int_0^T  \trace\big\{\partial_x\psi^{1,N}_s(X_s,Y_s^1,\dots,Y_s^N) (\gamma_s^1)^\intercal \big\}\dd s\\
    & 
    + \sum_{l=1}^N \int_0^T \partial_{y^l} u^{N}_s(X_s,Y_s^1,\dots,Y_s^N) \cdot\beta^1_s\dd s
    + \sum_{l=1}^N \int_0^T \partial_{y^l} u^{N}_s(X_s,Y_s^1,\dots,Y_s^N) \cdot\sigma^{0,l}_s \dd W^0_s\\
    &
    + \sum_{l=1}^N \int_0^T \partial_{y^l} u^{N}_s(X_s,Y_s^1,\dots,Y_s^N) \cdot\sigma^{1,l}_s \dd W^{1,l}_s\\
    &
    + \frac12 \sum_{l,l^\prime=1}^{N,N}\int_0^T \trace\big\{\partial_{y^ly^{l^\prime}} u^{N}_s(X_s,Y_s^1,\dots,Y_s^N) ~ \sigma^{0,l}_s(\sigma^{0,l^\prime}_s)^\intercal\big\}\dd s \\
    &
    + \frac12\sum_{l=1}^{N}\int_0^T \trace\big\{\partial_{y^ly^{l^\prime}} u^{N}_s(X_s,Y_s^1,\dots,Y_s^N)~ \sigma^{1,l}_s(\sigma^{1,l}_s)^\intercal\big\}\dd s\\
    &
    + \sum_{l=1}^{N}\int_0^T \trace\big\{\partial_{xy^l} u^{N}_s(X_s,Y_s^1,\dots,Y_s^N)  ~\gamma^0_s(\sigma^{0,l}_s)^\intercal\big\} \dd s\\
    &
    + \sum_{l = 1}^N\int_0^T \trace\big \{\partial_{y^l} \psi^{0,N}_s(X_s,Y_s^1,\dots,Y_s^N) (\sigma_s^{0,l})^\intercal \big\}\dd s.
\end{align*}
We again underline that we do not have additional $\partial_{xy^l}u$ and $\partial_{y^l} \psi$ terms due to the fact that $\langle W^1,W^{1,l}\rangle_t = 0,~ l = 1,\dots,N$, at the same time diagonally summing one of $\partial^2_\mu u$, due to mutual independence of $W^i, W^j,~i,j \in 1,\dots,N,~ i\neq j$.

Now we transform the equation according to Proposition \ref{prop:DerivativeRelations-Space-2-Lions}, and applying $\bE^{1,1,\dots,N}\big[\cdot\big]:=\bE\big[\cdot|\cF^0\otimes \cF^1\big]$, law of large numbers, Fubini theorem and boundedness of $\partial_\mu^2 u$ we get $\bP$-a.s
\begin{align*}
    \bE&^{1,1,\dots,N}\big[u(X_T,\bar\mu_T^N)\big] - \bE^{1,1,\dots,N}\big[u^N(X_0,\bar \mu_0^N) \big]
    = \int_0^T\bE^{1,1,\dots,N}\Big[\phi^N_s(X_s,\bar \mu^N_s(\omega^0,\cdot))\Big]\dd s\\
    &
    + \int_0^T\bE^{1,1,\dots,N}\Big[\psi^{0,N}_s(X_s,\bar \mu^N_s(\omega^0,\cdot))\Big]\cdot \dd W^0_s
    + \int_0^T\bE^{1,1,\dots,N}\Big[\psi^{1,N}_s(X_s,\bar \mu^N_s(\omega^0,\cdot))\Big]\cdot \dd W_s^1 \\
    &
    + \int_0^T\bE^{1,1,\dots,N}\Big[\partial_xu_s(X_s,\bar \mu^N_s(\omega^0,\cdot))\Big] \cdot \beta^1_s\dd s
    + \int_0^T\bE^{1,1,\dots,N}\Big[\partial_xu_s(X_s,\bar \mu^N_s(\omega^0,\cdot))\Big] \cdot \gamma^0_s\dd W_s^0\\
    &
    + \int_0^T\bE^{1,1,\dots,N}\Big[\partial_xu_s(X_s,\bar \mu^N_s(\omega^0,\cdot))\Big] \cdot \gamma^1_s\dd W_s^1\\
    &
    + \int_0^T\frac12~\bE^{1,1,\dots,N}\Big[\trace\big\{\partial_{xx}u_s(X_s,\bar \mu^N_s(\omega^0,\cdot))(\gamma^0_s(\gamma^0_s)^\intercal+ \gamma^1_s(\gamma^1_s)^\intercal)\big\}\Big] \dd s\\
    &
    + \int_0^T\bE^{1,1,\dots,N}\Big[ \partial_x \psi^{0}_s(X_s,\bar \mu^N_s(\omega^0,\cdot)) \Big]\gamma_s^0\dd s
     + \int_0^T\bE^{1,1,\dots,N}\Big[ \partial_x \psi^{1}_s(X_s,\bar \mu^N_s(\omega^0,\cdot))\Big]\gamma_s^1\dd s\\
     & 
     + \int_0^T\bE^{1,1,\dots,N}\Big[ \partial_\mu u_s(X_s,\bar \mu^N_s(\omega^0,\cdot),Y_s^1) \cdot \beta^1_s\Big]\dd s
     + \int_0^T\bE^{1,1,\dots,N}\Big[ (\sigma^{0,1}_s)^\intercal \partial_\mu u_s(X_s,\bar \mu^N_s(\omega^0,\cdot)) \Big]\cdot \dd W^0_s\\
     &
     + \int_0^T\frac12~\bE^{1,1,\dots,N}\Big[ \trace\big\{\partial_{v}\partial_\mu u_s(X_s,\bar \mu^N_s(\omega^0,\cdot),Y_s^1)  (  \sigma^{0,1}_s(\sigma^{0,1}_s)+\sigma^{1,1}_s(\sigma^{1,1}_s)^\intercal)\big\}\Big]\dd s \\
     &
     + \int_0^T\frac12~\bE^{1,1,\dots,N}\Big[ \trace\big\{\partial_\mu^2u_s(X_s,\bar \mu^N_s(\omega^0,\cdot),Y_s^1,Y_s^2) ~\sigma^{0,1}_s(\sigma^{0,2}_s)^\intercal\big\}\Big]\dd s\\
     &
     + \int_0^T~\bE^{1,1,\dots,N}\Big[ \trace\big \{\partial_{x}\partial_\mu u_s(X_s,\bar \mu^N_s(\omega^0,\cdot),Y_s^1)  ~\gamma^0_s(\sigma^{0,1}_s)^\intercal\big\}\Big] \dd s\\
     &
     + \int_0^T\bE^{1,1,\dots,N}\Big[ \trace\big \{\partial_\mu \psi^{0}_s(X_s,\bar \mu^N_s(\omega^0,\cdot),Y^1_s) (\sigma_s^{0,1})^\intercal\big\}\Big]\dd s + O\big(\frac1N\big).
\end{align*}
We note that the expectation taken on the term in the fifth line does not charge the process $(\gamma^0(\gamma^0)^\intercal+\gamma^1(\gamma^1)^\intercal)$, we write it as it is to preserve the matrix-trace notation.

According to the conditional propagation of chaos argument, as given in Theorem \ref{theo:IW-PartialFlow-measureOnly},
dominated convergence theorem (twice for the terms from the last five lines), localisation for $X$ and joint continuity and integrability of involved terms one can conclude the convergence of the above formula to \eqref{eq:Partial-Flow-Full-Chain-Rule}.
We argue additionally across convergence of quadratic variation to handle the stochastic integral terms. 

As before we switch to two model particles (living on $(\Omega^0 \times \tilde\Omega^1, \cF^0 \otimes\tilde\cF^1, \bF^0 \otimes \tilde\bF^1, \bP^0 \otimes \tilde\bP^1)$ and $(\Omega^0 \times \hat\Omega^1, \cF^0 \otimes\hat\cF^1, \bF^0 \otimes \hat\bF^1, \bP^0 \otimes \hat\bP^1)$ respectively) and swap the integral and expectation by stochastic Fubini theorem. Again and finally, we assert the measurability of involved terms by Remark \ref{rem:measurability-main-remark}.

\end{proof}

\textbf{Conflict of interests.} The authors declare that they have no conflict of interest.

%
%

%
%


%
%

%
%
%


\begin{bibdiv}
\begin{biblist}

\bib{AhmadHamblyLedger2018}{article}{
      author={Ahmad, F.},
      author={Hambly, B.~M.},
      author={Ledger, S.},
       title={A stochastic partial differential equation model for the pricing
  of mortgage-backed securities},
        date={2018},
        ISSN={0304-4149},
     journal={Stochastic Process. Appl.},
      volume={128},
      number={11},
       pages={3778\ndash 3806},
         url={https://doi.org/10.1016/j.spa.2017.12.002},
      review={\MR{3860010}},
}

\bib{AlbeverioKondratievRockner1996}{article}{
      author={Albeverio, Sergio},
      author={Kondratiev, Yuri~G.},
      author={R\"{o}ckner, Michael},
       title={Differential geometry of {P}oisson spaces},
        date={1996},
        ISSN={0764-4442},
     journal={C. R. Acad. Sci. Paris S\'{e}r. I Math.},
      volume={323},
      number={10},
       pages={1129\ndash 1134},
      review={\MR{1423438}},
}

\bib{AmbrosioGigliSavare2005book}{book}{
      author={Ambrosio, Luigi},
      author={Gigli, Nicola},
      author={Savar\'{e}, Giuseppe},
       title={Gradient flows in metric spaces and in the space of probability
  measures},
      series={Lectures in Mathematics ETH Z\"{u}rich},
   publisher={Birkh\"{a}user Verlag, Basel},
        date={2005},
        ISBN={978-3-7643-2428-5; 3-7643-2428-7},
      review={\MR{2129498}},
}

\bib{bayraktar21finite-space}{article}{
      author={Bayraktar, Erhan},
      author={Cecchin, Alekos},
      author={Cohen, Asaf},
      author={Delarue, Fran\c{c}ois},
       title={Finite state mean field games with {W}right-{F}isher common
  noise},
        date={2021},
        ISSN={0021-7824},
     journal={J. Math. Pures Appl. (9)},
      volume={147},
       pages={98\ndash 162},
         url={https://doi.org/10.1016/j.matpur.2021.01.003},
      review={\MR{4213680}},
}

\bib{de2019implications}{article}{
      author={Bethencourt~de L\'{e}on, Aythami},
      author={Holm, Darryl~D.},
      author={Luesink, Erwin},
      author={Takao, So},
       title={Implications of {K}unita-{I}t\^{o}-{W}entzell formula for
  {$k$}-forms in stochastic fluid dynamics},
        date={2020},
        ISSN={0938-8974},
     journal={J. Nonlinear Sci.},
      volume={30},
      number={4},
       pages={1421\ndash 1454},
         url={https://doi.org/10.1007/s00332-020-09613-0},
      review={\MR{4113332}},
}

\bib{BossyJabirTalay2011OnConditional}{article}{
      author={Bossy, Mireille},
      author={Jabir, Jean-Fran\c{c}ois},
      author={Talay, Denis},
       title={On conditional {M}c{K}ean {L}agrangian stochastic models},
        date={2011},
        ISSN={0178-8051},
     journal={Probab. Theory Related Fields},
      volume={151},
      number={1-2},
       pages={319\ndash 351},
         url={https://doi.org/10.1007/s00440-010-0301-z},
      review={\MR{2834721}},
}

\bib{buckdahn2017mean}{article}{
      author={Buckdahn, Rainer},
      author={Li, Juan},
      author={Peng, Shige},
      author={Rainer, Catherine},
       title={Mean-field stochastic differential equations and associated
  {PDE}s},
        date={2017},
        ISSN={0091-1798},
     journal={Ann. Probab.},
      volume={45},
      number={2},
       pages={824\ndash 878},
         url={https://doi.org/10.1214/15-AOP1076},
      review={\MR{3630288}},
}

\bib{cardaliaguet2010notes}{techreport}{
      author={Cardaliaguet, Pierre},
       title={Notes on mean field games},
 institution={Technical report},
        date={2010},
}

\bib{cardaliaguet2019master}{book}{
      author={Cardaliaguet, Pierre},
      author={Delarue, Fran{\c{c}}ois},
      author={Lasry, Jean-Michel},
      author={Lions, Pierre-Louis},
       title={The master equation and the convergence problem in mean field
  games:(ams-201)},
   publisher={Princeton University Press},
        date={2019},
      volume={381},
}

\bib{carmona2014master}{incollection}{
      author={Carmona, Ren\'{e}},
      author={Delarue, Fran\c{c}ois},
       title={The master equation for large population equilibriums},
        date={2014},
   booktitle={Stochastic analysis and applications 2014},
      series={Springer Proc. Math. Stat.},
      volume={100},
   publisher={Springer, Cham},
       pages={77\ndash 128},
         url={https://doi.org/10.1007/978-3-319-11292-3_4},
      review={\MR{3332710}},
}

\bib{CarmonaDelarue2017book1}{book}{
      author={Carmona, Rene},
      author={Delarue, Francois},
       title={Probabilistic theory of mean field games with applications {I}},
     edition={1},
      series={Probability Theory and Stochastic Modelling},
   publisher={Springer International Publishing},
        date={2017},
      volume={84},
}

\bib{CarmonaDelarue2017book2}{book}{
      author={Carmona, Rene},
      author={Delarue, Francois},
       title={Probabilistic theory of mean field games with applications {II}},
     edition={1},
      series={Probability Theory and Stochastic Modelling},
   publisher={Springer International Publishing},
        date={2017},
      volume={84},
}

\bib{carmona1999stochastic}{book}{
      editor={Carmona, Rene~A.},
      editor={Rozovskii, Boris},
       title={Stochastic partial differential equations: six perspectives},
      series={Mathematical Surveys and Monographs},
   publisher={American Mathematical Society, Providence, RI},
        date={1999},
      volume={64},
        ISBN={0-8218-0806-0},
         url={https://doi.org/10.1090/surv/064},
      review={\MR{1661761}},
}

\bib{cavallazzi2021krylov}{article}{
      author={Cavallazzi, Thomas},
       title={It{\^o}-{K}rylov's formula for a flow of measures},
        date={2021},
     journal={arXiv preprint arXiv:2110.05251},
}

\bib{chassagneux2014classical}{article}{
      author={Chassagneux, Jean-Fran{\c{c}}ois},
      author={Crisan, Dan},
      author={Delarue, Fran{\c{c}}ois},
       title={A probabilistic approach to classical solutions of the master
  equation for large population equilibria},
        date={2014},
     journal={arXiv preprint arXiv:1411.3009},
}

\bib{cont2013functional}{article}{
      author={Cont, Rama},
      author={Fourni\'{e}, David-Antoine},
       title={Functional {I}t\^{o} calculus and stochastic integral
  representation of martingales},
        date={2013},
        ISSN={0091-1798},
     journal={Ann. Probab.},
      volume={41},
      number={1},
       pages={109\ndash 133},
         url={https://doi.org/10.1214/11-AOP721},
      review={\MR{3059194}},
}

\bib{crisan2018smoothing}{article}{
      author={Crisan, Dan},
      author={McMurray, Eamon},
       title={Smoothing properties of {M}c{K}ean-{V}lasov {SDE}s},
        date={2018},
        ISSN={0178-8051},
     journal={Probab. Theory Related Fields},
      volume={171},
      number={1-2},
       pages={97\ndash 148},
         url={https://doi.org/10.1007/s00440-017-0774-0},
      review={\MR{3800831}},
}

\bib{platonov2020forwardCRRA}{article}{
      author={dos Reis, Goncalo},
      author={Platonov, Vadim},
       title={Forward utility and market adjustments in relative
  investment-consumption games of many players},
        date={2020},
     journal={To appear in SIFIN, arXiv preprint arXiv:2012.01235},
}

\bib{Platonov2020forward}{inproceedings}{
      author={dos Reis, Gon{\c{c}}alo},
      author={Platonov, Vadim},
       title={Forward utilities and mean-field games under relative performance
  concerns},
        date={2021},
   booktitle={From particle systems to partial differential equations},
      editor={Bernardin, C{\'e}dric},
      editor={Golse, Fran{\c{c}}ois},
      editor={Gon{\c{c}}alves, Patr{\'i}cia},
      editor={Ricci, Valeria},
      editor={Soares, Ana~Jacinta},
   publisher={Springer International Publishing},
     address={Cham},
       pages={227\ndash 251},
}

\bib{platonov2021StratonovichIto}{article}{
      author={dos Reis, Goncalo},
      author={Platonov, Vadim},
       title={On the relation between {S}tratonovich and {I}t{\^o} integrals
  with functional integrands of conditional measure flows},
        date={2021},
     journal={arXiv preprint arXiv:2111.03523},
}

\bib{DuboscqReveillac2016}{article}{
      author={Duboscq, Romain},
      author={R\'{e}veillac, Anthony},
       title={Stochastic regularization effects of semi-martingales on random
  functions},
        date={2016},
        ISSN={0021-7824},
     journal={J. Math. Pures Appl. (9)},
      volume={106},
      number={6},
       pages={1141\ndash 1173},
         url={https://doi.org/10.1016/j.matpur.2016.04.004},
      review={\MR{3565418}},
}

\bib{elkaroui2018consistent}{article}{
      author={El~Karoui, Nicole},
      author={Hillairet, Caroline},
      author={Mrad, Mohamed},
       title={Consistent utility of investment and consumption: a
  forward/backward {SPDE} viewpoint},
        date={2018},
        ISSN={1744-2508},
     journal={Stochastics},
      volume={90},
      number={6},
       pages={927\ndash 954},
      review={\MR{3814804}},
}

\bib{nicole2013exact}{article}{
      author={El~Karoui, Nicole},
      author={Mrad, Mohamed},
       title={An exact connection between two solvable {SDE}s and a nonlinear
  utility stochastic {PDE}},
        date={2013},
        ISSN={1945-497X},
     journal={SIAM J. Financial Math.},
      volume={4},
      number={1},
       pages={697\ndash 736},
         url={https://doi.org/10.1137/10081143X},
      review={\MR{3106475}},
}

\bib{Erny2021ConditoinalPoC}{article}{
      author={Erny, Xavier},
      author={L\"{o}cherbach, Eva},
      author={Loukianova, Dasha},
       title={Conditional propagation of chaos for mean field systems of
  interacting neurons},
        date={2021},
     journal={Electron. J. Probab.},
      volume={26},
       pages={Paper No. 20, 25},
         url={https://doi.org/10.1214/21-EJP580},
      review={\MR{4235471}},
}

\bib{FlandoliRusso2002}{article}{
      author={Flandoli, Franco},
      author={Russo, Francesco},
       title={Generalized integration and stochastic {ODE}s},
        date={2002},
        ISSN={0091-1798},
     journal={Ann. Probab.},
      volume={30},
      number={1},
       pages={270\ndash 292},
         url={https://doi.org/10.1214/aop/1020107768},
      review={\MR{1894108}},
}

\bib{GerencserGyoengy2017}{article}{
      author={Gerencs\'{e}r, M\'{a}t\'{e}},
      author={Gy\"{o}ngy, Istv\'{a}n},
       title={Localization errors in solving stochastic partial differential
  equations in the whole space},
        date={2017},
        ISSN={0025-5718},
     journal={Math. Comp.},
      volume={86},
      number={307},
       pages={2373\ndash 2397},
         url={https://doi.org/10.1090/mcom/3201},
      review={\MR{3647962}},
}

\bib{guo2020s}{article}{
      author={Guo, Xin},
      author={Pham, Huy{\^e}n},
      author={Wei, Xiaoli},
       title={It{\^o}'s formula for flow of measures on semimartingales},
        date={2020},
     journal={arXiv preprint arXiv:2010.05288},
}

\bib{holm20StochasticEffectsOfWaves}{article}{
      author={Holm, Darryl~D.},
      author={Hu, Ruiao},
       title={Stochastic effects of waves on currents in the ocean mixed
  layer},
        date={2021},
        ISSN={0022-2488},
     journal={J. Math. Phys.},
      volume={62},
      number={7},
       pages={Paper No. 073102, 31},
         url={https://doi.org/10.1063/5.0045010},
      review={\MR{4279912}},
}

\bib{Kammar2016FrechetLebesgue}{unpublished}{
      author={Kammar, Ohad},
       title={A note on {F}r{\'e}chet diffrentiation under {L}ebesgue
  integrals},
        date={2016},
  url={https://www.cs.ox.ac.uk/people/ohad.kammar/notes/kammar-a-note-on-frechet-differentiation-under-lebesgue-integrals.pdf},
        note={Preprint},
}

\bib{kolokoltsov2017RegularityAndSensitivity}{article}{
      author={Kolokoltsov, V.~N.},
      author={Troeva, M.~S.},
       title={Regularity and sensitivity for {M}c{K}ean-{V}lasov type {SPDE}s
  generated by stable-like processes},
        date={2018},
        ISSN={2306-3424},
     journal={Probl. Anal. Issues Anal.},
      volume={7(25)},
      number={2},
       pages={69\ndash 81},
         url={https://doi.org/10.15393/j3.art.2018.5250},
      review={\MR{3896524}},
}

\bib{krylov2011ito}{article}{
      author={Krylov, N.~V.},
       title={On the {I}t\^{o}-{W}entzell formula for distribution-valued
  processes and related topics},
        date={2011},
        ISSN={0178-8051},
     journal={Probab. Theory Related Fields},
      volume={150},
      number={1-2},
       pages={295\ndash 319},
         url={https://doi.org/10.1007/s00440-010-0275-x},
      review={\MR{2800911}},
}

\bib{KrylovWang2011}{article}{
      author={Krylov, N.~V.},
      author={Wang, Teng},
       title={Filtering partially observable diffusions up to the exit time
  from a domain},
        date={2011},
        ISSN={0304-4149},
     journal={Stochastic Process. Appl.},
      volume={121},
      number={8},
       pages={1785\ndash 1815},
         url={https://doi.org/10.1016/j.spa.2011.04.008},
      review={\MR{2811024}},
}

\bib{kunita1981some}{incollection}{
      author={Kunita, Hiroshi},
       title={Some extensions of {I}t\^{o}'s formula},
        date={1981},
   booktitle={Seminar on {P}robability, {XV} ({U}niv. {S}trasbourg,
  {S}trasbourg, 1979/1980) ({F}rench)},
      series={Lecture Notes in Math.},
      volume={850},
   publisher={Springer, Berlin},
       pages={118\ndash 141},
      review={\MR{622557}},
}

\bib{kunita1997stochastic}{book}{
      author={Kunita, Hiroshi},
       title={Stochastic flows and stochastic differential equations},
      series={Cambridge Studies in Advanced Mathematics},
   publisher={Cambridge University Press, Cambridge},
        date={1997},
      volume={24},
        ISBN={0-521-35050-6; 0-521-59925-3},
        note={Reprint of the 1990 original},
      review={\MR{1472487}},
}

\bib{Lacker2019InvertingTM}{article}{
      author={Lacker, Daniel},
      author={Shkolnikov, Mykhaylo},
      author={Zhang, Jiacheng},
       title={Inverting the {M}arkovian projection, with an application to
  local stochastic volatility models},
        date={2020},
        ISSN={0091-1798},
     journal={Ann. Probab.},
      volume={48},
      number={5},
       pages={2189\ndash 2211},
         url={https://doi.org/10.1214/19-AOP1420},
      review={\MR{4152640}},
}

\bib{lions2007cours}{article}{
      author={Lions, Pierre-Louis},
       title={Cours au {C}ollege de {F}rance: : Th{\'e}orie des jeux a champs
  moyens},
        date={2007},
     journal={Available at www.college-de-france.fr},
}

\bib{matoussi:hal-03025475}{unpublished}{
      author={Matoussi, Anis},
      author={Mrad, Mohamed},
       title={{Dynamic Utility and related nonlinear SPDE driven by L{\'e}vy
  Noise.}},
        date={2020},
         url={https://hal.archives-ouvertes.fr/hal-03025475},
        note={working paper or preprint},
}

\bib{ocone1989generalized}{inproceedings}{
      author={Ocone, Daniel},
      author={Pardoux, \'{E}tienne},
       title={A generalized {I}t\^{o}-{V}entzell formula. {A}pplication to a
  class of anticipating stochastic differential equations},
        date={1989},
      volume={25},
       pages={39\ndash 71},
         url={http://www.numdam.org/item?id=AIHPB_1989__25_1_39_0},
      review={\MR{995291}},
}

\bib{RenRocknerWang2022linearization}{article}{
      author={Ren, Panpan},
      author={R\"{o}ckner, Michael},
      author={Wang, Feng-Yu},
       title={Linearization of nonlinear {F}okker-{P}lanck equations and
  applications},
        date={2022},
        ISSN={0022-0396},
     journal={J. Differential Equations},
      volume={322},
       pages={1\ndash 37},
         url={https://doi.org/10.1016/j.jde.2022.03.021},
      review={\MR{4398417}},
}

\bib{RenWang2021DerivativeFormulas}{article}{
      author={Ren, Panpan},
      author={Wang, Feng-Yu},
       title={Derivative formulas in measure on {R}iemannian manifolds},
        date={2021},
        ISSN={0024-6093},
     journal={Bull. Lond. Math. Soc.},
      volume={53},
      number={6},
       pages={1786\ndash 1800},
         url={https://doi.org/10.1112/blms.12542},
      review={\MR{4379563}},
}

\bib{rozovskii1990stochastic}{book}{
      author={Rozovski\u{\i}, B.~L.},
       title={Stochastic evolution systems},
      series={Mathematics and its Applications (Soviet Series)},
   publisher={Kluwer Academic Publishers Group, Dordrecht},
        date={1990},
      volume={35},
        ISBN={0-7923-0037-8},
         url={https://doi.org/10.1007/978-94-011-3830-7},
        note={Linear theory and applications to nonlinear filtering, Translated
  from the Russian by A. Yarkho},
      review={\MR{1135324}},
}

\bib{talbi2021dynamic}{article}{
      author={Talbi, Mehdi},
      author={Touzi, Nizar},
      author={Zhang, Jianfeng},
       title={Dynamic programming equation for the mean field optimal stopping
  problem},
        date={2021},
     journal={arXiv preprint arXiv:2103.05736},
}

\bib{ventzel1965equations}{inproceedings}{
      author={Ventzel, A.~D.},
       title={On equations of theory of conditional {M}arkov processes},
        date={1965},
   booktitle={Theory of probability and its applications, ussr},
      volume={10},
       pages={357\ndash 361},
}

\bib{Veraar2010FubiniRevisited}{article}{
      author={Veraar, Mark},
       title={The stochastic {F}ubini theorem revisited},
        date={2012},
        ISSN={1744-2508},
     journal={Stochastics},
      volume={84},
      number={4},
       pages={543\ndash 551},
         url={https://doi.org/10.1080/17442508.2011.618883},
      review={\MR{2966093}},
}

\bib{Villani2009}{book}{
      author={Villani, C\'{e}dric},
       title={Optimal transport},
      series={Grundlehren der mathematischen Wissenschaften [Fundamental
  Principles of Mathematical Sciences]},
   publisher={Springer-Verlag, Berlin},
        date={2009},
      volume={338},
        ISBN={978-3-540-71049-3},
         url={https://doi.org/10.1007/978-3-540-71050-9},
        note={Old and new},
      review={\MR{2459454}},
}

\bib{Wang2021Imagedependent}{article}{
      author={Wang, Feng-Yu},
       title={Image-dependent conditional {M}c{K}ean-{V}lasov {SDE}s for
  measure-valued diffusion processes},
        date={2021},
        ISSN={1424-3199},
     journal={J. Evol. Equ.},
      volume={21},
      number={2},
       pages={2009\ndash 2045},
         url={https://doi.org/10.1007/s00028-020-00665-z},
      review={\MR{4278420}},
}

\end{biblist}
\end{bibdiv}

\end{document}